\newtheorem{theor}{Theorem}[section]
\newtheorem{defi}[theor]{Definition}
\newtheorem{lemma}[theor]{Lemma}
\newcommand{\Sign}{\mathrm{Sign}}
\begin{document}

\begin{center}
{\Large 
Sign conjugacy classes of the symmetric groups}

Lucia Morotti
\end{center}

\begin{abstract}
A conjugacy class $C$ of a finite group $G$ is a sign conjugacy class if every irreducible character of $G$ takes value 0, 1 or -1 on $C$. In this paper we classify the sign conjugacy classes of the symmetric groups and thereby verify a conjecture of Olsson.

\end{abstract}

\section{Introduction}

We will begin this paper by giving the definition of sign conjugacy class for an arbitrary finite group.

\begin{defi}
Let $G$ be a finite group. A conjugacy class of $G$ is a sign conjugacy class of $G$ if every irreducible character of $G$ takes values 0, 1 or -1 on $C$.
\end{defi}

Since we will be working with the symmetric group, we will consider partitions instead of conjugacy classes. A partition of $n$ is a sign partition if it is the corresponding conjugacy class of $S_n$ is a sign conjugacy class. An easy example of a sign partition of $n$ is $(n)$.

\begin{defi}
Define $\Sign$ to be the subsets of partitions consisting of all partitions $(\gamma_1,\ldots,\gamma_r)$ for which there exists an $s$, $0\leq s\leq r$, such that the following hold:
\begin{itemize}
\item
$\gamma_i>\gamma_{i+1}+\ldots+\gamma_r$ for $1\leq i\leq s$,

\item
$(\gamma_{s+1},\ldots,\gamma_r)$ is one of the following partitions:
\begin{itemize}
\item
$()$, $(1,1)$, $(3,2,1,1)$ or $(5,3,2,1)$,

\item
$(a,a-1,1)$ with $a\geq 2$,

\item
$(a,a-1,2,1)$ with $a\geq 4$,

\item
$(a,a-1,3,1)$ with $a\geq 5$.
\end{itemize}
\end{itemize}
\end{defi}

The name $\Sign$ for the above set is justified by the next theorem, which classifies sign partitions.

\begin{theor}\label{t1}A partition $\gamma$ is a sign partition if and only if $\gamma\in\Sign$.
\end{theor}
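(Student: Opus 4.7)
The plan is to prove both directions of the equivalence by induction on the number of parts of $\gamma$, using the Murnaghan--Nakayama rule to peel off the large initial parts $\gamma_1,\ldots,\gamma_s$ one at a time. The recursive shape of the definition of $\Sign$ almost dictates this: the inequalities $\gamma_i > \gamma_{i+1}+\ldots+\gamma_r$ for $i\le s$ are exactly the hypothesis that lets one strip $\gamma_i$ via Murnaghan--Nakayama and pass to the tail $(\gamma_{i+1},\ldots,\gamma_r)$. Iterating reduces both implications to an analysis of the explicit ``base'' partitions listed inside $\Sign$.

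For the sufficiency direction, the key reduction lemma would state: if $\gamma_1 > \gamma_2+\ldots+\gamma_r$ and $(\gamma_2,\ldots,\gamma_r)$ is a sign partition, then so is $(\gamma_1,\gamma_2,\ldots,\gamma_r)$. For any $\lambda\vdash n$ the Murnaghan--Nakayama rule gives
\[
\chi^\lambda(\gamma) \;=\; \sum_{\xi} (-1)^{\ell(\xi)}\,\chi^{\lambda\setminus\xi}(\gamma_2,\ldots,\gamma_r),
\]
summed over $\gamma_1$-rim hooks $\xi$ in $\lambda$. The strict inequality forces each complement $\lambda\setminus\xi$ to be a partition of size less than $\gamma_1$, so by induction every summand already lies in $\{-1,0,1\}$. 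The nontrivial content is that the full sum again lies in $\{-1,0,1\}$; I would extract this from an abacus analysis on the $\gamma_1$-runner, showing that, because $\gamma_1$ exceeds half of $n$, the available $\gamma_1$-rim hooks in $\lambda$ are highly constrained and either reduce to a single term or pair up with equal complement and opposite sign. Iterating the lemma reduces the problem to verifying that each listed base partition is itself a sign partition: the four sporadic cases $()$, $(1,1)$, $(3,2,1,1)$, $(5,3,2,1)$ are settled by direct character-table computation, while the infinite families $(a,a-1,1)$, $(a,a-1,2,1)$, $(a,a-1,3,1)$ are handled by an explicit Murnaghan--Nakayama enumeration parametrised by $a$, using that their hook structure is rigid enough to make every character value $\pm 1$ or $0$.

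For the necessity direction I would prove the contrapositive: for every $\gamma\notin\Sign$, exhibit an explicit $\lambda\vdash n$ with $|\chi^\lambda(\gamma)|\ge 2$. As long as $\gamma_1 > \gamma_2+\ldots+\gamma_r$, the same Murnaghan--Nakayama stripping reduces the question to the ``tail'' $(\gamma_2,\ldots,\gamma_r)$, so the genuine work occurs at the first index $i$ where the inequality fails, equivalently at classifying the forbidden tails $\beta$. These split into manageable families: two-row $(a,b)$ with $b\neq a-1$; three-part $(a,b,c)$ with $b<a-1$ or $c>1$; longer partitions whose last three or four parts fall outside the listed shapes; and a short list of small sporadic patterns. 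For each family I would produce a witness $\lambda$ of small shape (typically a hook, two-row, or near-hook partition) and verify by a short Murnaghan--Nakayama calculation that $|\chi^\lambda(\gamma)|\ge 2$.

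The main obstacle I anticipate is the necessity side: the forbidden partitions just outside $\Sign$ resemble the base partitions inside it so closely that one must choose the witness $\lambda$ very carefully, and then check that no accidental cancellation in the Murnaghan--Nakayama sum collapses the value back into $\{-1,0,1\}$. A secondary but still significant difficulty is the abacus argument behind the sufficiency lemma, where one must rule out multi-term sums of $\pm 1$'s from growing beyond absolute value $1$; and the uniform-in-$a$ verification for the three infinite base families, which must work for all admissible $a$ simultaneously.
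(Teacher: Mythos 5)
Your sufficiency half follows the paper's route (strip superincreasing parts, then verify the listed base partitions), and one worry there is moot: since $\gamma_1$ exceeds the size of the remaining parts, a partition of $|\gamma|$ has at most one $\gamma_1$-hook, so the Murnaghan--Nakayama sum has at most one term and no abacus pairing argument is needed --- this is exactly Olsson's reduction (Lemma \ref{l2}), which the paper simply quotes. The genuine gap is in the necessity direction. After removing the maximal superincreasing prefix, your plan requires an explicit witness $\lambda$ with $|\chi^\lambda_\delta|\geq 2$ for \emph{every} partition $\delta\notin\Sign$ whose first part is at most the sum of the remaining parts. That class is far less structured than your family list suggests: it contains, e.g., $(10,9,8,7,1)$ and $(10,9,8,7,6,5,4,3,2,1)$, whose own tails may or may not lie in $\Sign$, and your proposed families are not even a correct description of non-members of $\Sign$ (a two-row $(a,b)$ with $a>b$ is always in $\Sign$, whatever $b$ is, because it is fully superincreasing; similarly $(a,b,c)$ with $b<a-1$ or $c>1$ can lie in $\Sign$). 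The paper avoids this blow-up by inducting from the \emph{bottom} of the partition: a witness is only ever constructed in the situation of Theorem \ref{t2}, where the tail $(\alpha_2,\ldots,\alpha_h)$ is already known to lie in $\Sign$ and hence has one of a handful of explicit shapes --- and even then the construction takes two sections of case analysis.

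The second missing ingredient is the lifting mechanism. In the bottom-up induction the witness for $(\gamma_{i-1},\ldots,\gamma_r)$ must be promoted to a witness for all of $\gamma$, and the parts $\gamma_1,\ldots,\gamma_{i-2}$ above the failure point are \emph{not} superincreasing in general, so the only stripping argument you have (uniqueness of an $m$-hook when $m$ exceeds the rest) does not apply to them. The paper builds the lifting into Theorem \ref{t2} by demanding the extra property $h^\beta_{2,1}=\alpha_1$, which forces every larger part of $\gamma$ to be removable only from the first row of the extended diagram $\delta=(\beta_1+\gamma_1+\ldots+\gamma_{i-2},\beta_2,\ldots)$, giving $\chi^\delta_\gamma=\chi^\beta_{(\gamma_{i-1},\ldots,\gamma_r)}$. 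Your proposal has no analogue of this constraint. Finally, you do not address the exceptional partition $(5,4,3,2,1)$ (and the derived case $(6,5,4,3,2,1)$), which is excluded from Theorem \ref{t2} precisely because no witness with the required hook property exists and which the paper must treat separately by explicit character values; in your top-down scheme such cases appear among the tails and are nowhere accounted for.
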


This was first formulated by Olsson in \cite{b7} as a conjecture.

In order to prove Theorem \ref{t1} we will use two results from \cite{b7}. The first one of them is the following lemma (Theorem 7 of \cite{b7}).

\begin{lemma}\label{l1}
A sign partition cannot have repeated parts, except possibly for the part 1, which may have multiplicity 2.
\end{lemma}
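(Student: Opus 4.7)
The plan is to prove the contrapositive. Given $\gamma=(\gamma_1,\ldots,\gamma_r)$ containing either a part $k\geq 2$ with multiplicity at least $2$, or the part $1$ with multiplicity at least $3$, I would exhibit an irreducible character $\chi^\lambda$ of $S_n$ with $|\chi^\lambda(\gamma)|\geq 2$. The main tool throughout is the Murnaghan--Nakayama rule
$$\chi^\lambda(\gamma)=\sum_{\xi}(-1)^{\mathrm{leg}(\xi)}\,\chi^{\lambda\setminus\xi}\bigl(\gamma\setminus\gamma_1\bigr),$$
where $\xi$ ranges over rim $\gamma_1$-hooks of $\lambda$.

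The guiding example is $\gamma=\lambda=(k,k)$ for $k\geq 2$: a direct Murnaghan--Nakayama computation gives $\chi^{(k,k)}((k,k))=2$, since $(k,k)$ has exactly two rim $k$-hooks (leaving $(k)$ and $(k-1,1)$), and the two contributions both evaluate to $+1$. My strategy is to reduce the general case to this base. If $\gamma_1>\gamma_2+\cdots+\gamma_r$, then for a suitable choice of $\lambda$ the first Murnaghan--Nakayama step is essentially forced, and the problem descends to a smaller symmetric group in which the repeated part persists. Iterating, I may assume $k$ is one of the largest parts of $\gamma$. For such $\gamma$ I would then take $\lambda$ to contain a $(k,k)$-block, with its remaining parts chosen to absorb the rest of $\gamma$ without introducing sign cancellations; concretely $\lambda$ of the form $(k+a,k)$, $(k,k,a)$, or rectangular shapes $(k^c)$ are natural first candidates.

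The case where $1$ appears at least $3$ times is easier and can be handled using the hook character $\chi^{(n-2,1,1)}$, whose value on a permutation is an explicit polynomial in the number $m$ of fixed points (with lower-order corrections coming from the non-trivial cycles, computable by a short Murnaghan--Nakayama induction on the non-trivial cycles). Under the hypothesis $m\geq 3$ this value readily exceeds $1$ in absolute value.

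The main obstacle is ensuring that the reinforcing contributions from the $(k,k)$-block do not get cancelled by competing rim-hook removals involving the remaining parts of $\gamma$. Organising this cleanly probably requires a careful induction on $|\gamma|$ together with a sign analysis keyed to the leg lengths of the competing hooks; some genuinely small base cases (say $\gamma=(2,2)$, $(3,3)$, $(3,2,2)$, $(4,3,3)$, $(1,1,1)$) would need to be verified by direct computation, and the exact choice of $\lambda$ may have to be adapted according to the parity of $k$ and the detailed structure of $\gamma\setminus(k,k)$.
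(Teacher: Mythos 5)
The paper itself does not prove this lemma: it is imported from Olsson (Theorem 7 of \cite{b7}), so there is no in-paper argument to compare with; judged on its own, your sketch has real gaps and one outright false step. First, the reduction. The stripping step you invoke (remove $\gamma_1$ when $\gamma_1>\gamma_2+\ldots+\gamma_r$; this is essentially Lemma \ref{l2}) can only be iterated until the largest part no longer exceeds the sum of the remaining ones, and at that point the repeated part $k$ need not be among the largest parts: for instance $\gamma=(5,3,3)$ or $\gamma=(10,9,2,2)$ admit no stripping step at all, yet the repeated part is not the largest. So the situation you propose to treat with a $(k,k)$-block is not the general case, and even in that situation no proof is actually given: the witness $\lambda$ is left unspecified (you only list candidate shapes) and the no-cancellation analysis, which is where all the content of the lemma lies, is explicitly deferred. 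The one complete computation, $\chi^{(k,k)}_{(k,k)}=2$, is correct, but it is only the base case.

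Second, the case of the part $1$ with multiplicity at least $3$ is handled incorrectly. The value of $\chi^{(n-2,1,1)}$ on a permutation with $m$ fixed points and $t$ cycles of length $2$ is $(m-1)(m-2)/2-t$; for $m=3$ this equals $1-t$, which has absolute value at most $1$ whenever $t\leq 2$. Concretely, on the classes $(3,1,1,1)$ and $(2,1,1,1)$ this character takes the values $1$ and $0$, so it does not witness failure of the sign property there (a correct witness for $(3,1,1,1)$ is, e.g., $\chi^{(3,2,1)}_{(3,1,1,1)}=-2$). Hence the claim that for $m\geq 3$ the value of $\chi^{(n-2,1,1)}$ readily exceeds $1$ in absolute value is false, and this branch of the argument also needs a different family of witnesses. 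As it stands the proposal is a plan with a correct guiding example rather than a proof.
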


In particular only partitions of the form $(\gamma_1,\ldots,\gamma_r)$ with either $\gamma_1>\ldots>\gamma_r$ or $\gamma_1>\ldots>\gamma_{r-2}>\gamma_{r-1}=\gamma_r=1$ may be sign partitions. The next lemma can also be found in \cite{b7} (Proposition 2).

\begin{lemma}\label{l2}
Let $(\gamma_1,\ldots,\gamma_r)$ be a partition of $n$ and let $m>n$. Then $(\gamma_1,\ldots,\gamma_r)$ is a sign partition if and only if $(m,\gamma_1,\ldots,\gamma_r)$ is a sign partition.
\end{lemma}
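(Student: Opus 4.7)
My plan is to apply the Murnaghan--Nakayama rule to directly relate character values of $S_{n+m}$ on the class of cycle type $(m,\gamma_1,\ldots,\gamma_r)$ to character values of $S_n$ on the class of cycle type $(\gamma_1,\ldots,\gamma_r)$. For fixed $\lambda\vdash n+m$ the rule expresses $\chi^\lambda(m,\gamma_1,\ldots,\gamma_r)$ as a signed sum of $\chi^{\lambda\setminus\xi}(\gamma_1,\ldots,\gamma_r)$ over $m$-rim hooks $\xi$ of $\lambda$, with each $\lambda\setminus\xi$ a partition of $n$.

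The central structural step is to show that, because $m>n$, each $\lambda\vdash n+m$ has at most one $m$-rim hook. I would use the $m$-core/$m$-quotient decomposition: $|\lambda|=|\tilde\lambda|+m\sum_i|\lambda^i|$, combined with $|\lambda|=n+m<2m$, forces $\sum_i|\lambda^i|\in\{0,1\}$. When this sum is $0$, $\lambda$ is an $m$-core, has no $m$-rim hook, and $\chi^\lambda(m,\gamma_1,\ldots,\gamma_r)=0$. When the sum is $1$, exactly one component $\lambda^{i_0}$ equals $(1)$ while the others are empty, so on the $m$-abacus only one bead move is available; removing the resulting unique $m$-rim hook returns the $m$-core $\tilde\lambda$, which has size $n$. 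In that case
$$\chi^\lambda(m,\gamma_1,\ldots,\gamma_r)=\pm\,\chi^{\tilde\lambda}(\gamma_1,\ldots,\gamma_r).$$

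To conclude, I would observe that since $m>n$ no partition of $n$ can contain a rim hook of length $m$, so every $\mu\vdash n$ is automatically an $m$-core; moreover each such $\mu$ arises as $\tilde\lambda$ for some $\lambda\vdash n+m$ obtained by appending an $m$-rim hook to $\mu$ (for instance $\lambda=(\mu_1+m,\mu_2,\ldots,\mu_r)$). Hence
$$\{\,|\chi^\lambda(m,\gamma_1,\ldots,\gamma_r)|:\lambda\vdash n+m\,\}=\{0\}\cup\{\,|\chi^\mu(\gamma_1,\ldots,\gamma_r)|:\mu\vdash n\,\},$$
and this set lies in $\{0,1\}$ on one side of the equality exactly when it does on the other, giving the desired equivalence. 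The main obstacle is the uniqueness claim for the $m$-rim hook of $\lambda$, which rules out any cancellation or doubling of the coefficient in front of $\chi^{\tilde\lambda}$; the abacus description of $m$-cores and $m$-quotients makes this precise, while the rest is bookkeeping with the Murnaghan--Nakayama rule and the definition of a sign partition.
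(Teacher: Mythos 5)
The paper does not actually prove this lemma; it is quoted from Olsson (Proposition 2 of the cited paper \cite{b7}), and your argument is correct and is essentially the standard proof of that result: since $n+m<2m$, the core--quotient decomposition (or the abacus) shows each $\lambda\vdash n+m$ has at most one $m$-rim hook, so Murnaghan--Nakayama gives $\chi^\lambda_{(m,\gamma)}=0$ or $\pm\chi^{\tilde\lambda}_\gamma$ with $\tilde\lambda\vdash n$, and every $\mu\vdash n$ occurs as such a $\tilde\lambda$ (e.g.\ via $(\mu_1+m,\mu_2,\ldots)$), which yields the equivalence. The only cosmetic point is that your displayed set equality need not literally contain $\{0\}$ on the left if no $m$-core of $n+m$ exists; the two containments you actually establish are all that the ``if and only if'' requires.
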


For any partition $\lambda=(\lambda_1,\ldots,\lambda_k)$ let $|\lambda|:=\lambda_1+\ldots+\lambda_k$. Also for $1\leq i\leq k$ and $1\leq j\leq \lambda_i$ let $h_{i,j}^\lambda$ denote the hook length of the node $(i,j)$ of $\lambda$. For partitions $\lambda,\mu$ with $|\lambda|=n=|\mu|$ let $\chi^\lambda_\mu$ denote the value of the irreducible character of $S_n$ labeled by $\lambda$ on the conjugacy class with cycle partition $\mu$.

Together with the previous lemmas, the following theorem, which will be proved in Sections \ref{s1} and \ref{s2}, will allow us to prove one direction of Theorem \ref{t1}.

\begin{theor}\label{t2}
Let $\alpha=(\alpha_1,\ldots,\alpha_h)$ be a partition with $h\geq 3$. Assume that $\alpha_1>\alpha_2$, that $\alpha\not\in\Sign$ and that $(\alpha_2,\ldots,\alpha_h)\in\Sign$. Then if $\alpha\not=(5,4,3,2,1)$ we can find a partition $\beta$ of $|\alpha|$ such that $\chi^\beta_\alpha\not\in\{0,\pm 1\}$ and $h_{2,1}^\beta=\alpha_1$.
\end{theor}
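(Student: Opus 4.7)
The strategy is to apply the Murnaghan-Nakayama rule with respect to the longest cycle of $\alpha$, of length $\alpha_1$. For any partition $\beta$ of $|\alpha|$,
\[
\chi^\beta_\alpha = \sum_H (-1)^{\mathrm{leg}(H)}\, \chi^{\beta\setminus H}_{(\alpha_2,\ldots,\alpha_h)},
\]
where the sum runs over rim hooks $H$ of length $\alpha_1$ in $\beta$. Since $(\alpha_2,\ldots,\alpha_h)\in\Sign$ is (inductively) a sign partition, every summand lies in $\{0,\pm 1\}$; hence it suffices to produce $\beta$ whose $\alpha_1$-rim-hook contributions have total absolute value at least $2$. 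The constraint $h^\beta_{2,1}=\alpha_1$ guarantees the existence of a specific $\alpha_1$-rim hook anchored at the $(2,1)$ corner, and it restricts the shape of $\beta$ enough to enumerate the other $\alpha_1$-rim hooks.

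From $\alpha\notin\Sign$ combined with $(\alpha_2,\ldots,\alpha_h)\in\Sign$, the definition of $\Sign$ forces $\alpha_1\leq\alpha_2+\cdots+\alpha_h$ (otherwise prepending $\alpha_1$ to $(\alpha_2,\ldots,\alpha_h)$ with $s$ increased by one would produce $\alpha\in\Sign$), and $\alpha$ is not itself a base case. A natural first candidate is the hook $\beta_0=(|\alpha|-\alpha_1,\,1^{\alpha_1})$, which always satisfies $h^{\beta_0}_{2,1}=\alpha_1$. When $\alpha_1<\alpha_2+\cdots+\alpha_h$, this $\beta_0$ admits exactly two $\alpha_1$-rim hooks---the column-one strip from $(2,1)$ and the rightmost $\alpha_1$ cells of row one---yielding
\[
\chi^{\beta_0}_\alpha=\chi^{(|\alpha|-2\alpha_1,\,1^{\alpha_1})}_{(\alpha_2,\ldots,\alpha_h)}+(-1)^{\alpha_1-1}.
\]
Whenever the first term equals $(-1)^{\alpha_1-1}$, we obtain $|\chi^{\beta_0}_\alpha|=2$ and are done.

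When this choice fails---either because $\alpha_1=\alpha_2+\cdots+\alpha_h$ (only one rim hook exists, giving $\chi^{\beta_0}_\alpha=\pm 1$), because the residual hook character vanishes, or because the two terms cancel---we replace $\beta_0$ by another near-hook shape still respecting $h^\beta_{2,1}=\alpha_1$, typically by adjusting the second row or the foot of column one. The case analysis splits on the structure of $(\alpha_2,\ldots,\alpha_h)\in\Sign$: the generic case $\alpha_2>\alpha_3+\cdots+\alpha_h$, in which the residual hook characters can be evaluated by iterating Murnaghan-Nakayama on the next cycle and then applying Lemma~\ref{l2} inductively; and the base cases, where $(\alpha_2,\ldots,\alpha_h)$ is one of the partitions listed in the definition of $\Sign$, each producing finitely many shapes to analyze individually.

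The main obstacle is the delicate verification in the base cases, where the rigid structure of $(\alpha_2,\ldots,\alpha_h)$ together with $\alpha_2<\alpha_1\leq\alpha_2+\cdots+\alpha_h$ can produce unwanted cancellations. The excluded partition $\alpha=(5,4,3,2,1)$, with tail $(4,3,2,1)$ and $\alpha_1=\alpha_2+1$, is precisely the case where every plausible $\beta$ with $h^\beta_{2,1}=5$ is forced into $\chi^\beta_\alpha\in\{0,\pm 1\}$; indeed $(5,4,3,2,1)$ will turn out to be a genuine sign partition, so no such $\beta$ can exist. Ruling out analogous failures for every other base-case tail and exhibiting a suitable $\beta$ in each remaining situation is the content of Sections~\ref{s1} and~\ref{s2}, carried out by explicit iterated Murnaghan-Nakayama computations.
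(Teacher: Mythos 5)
The theorem's entire content is the explicit construction, for every admissible configuration of $\alpha_1$ and of the tail $(\alpha_2,\ldots,\alpha_h)\in\Sign$, of a concrete $\beta$ with $h^\beta_{2,1}=\alpha_1$ and $\chi^\beta_\alpha=\pm2$, and your proposal stops exactly where that work begins. Your generic computation with the hook $\beta_0=(|\alpha|-\alpha_1,1^{\alpha_1})$ is correct (two $\alpha_1$-hooks when $\alpha_1<\alpha_2+\cdots+\alpha_h$, giving $(-1)^{\alpha_1-1}+\chi^{(|\alpha|-2\alpha_1,1^{\alpha_1})}_{(\alpha_2,\ldots,\alpha_h)}$), as is the observation that $\alpha\notin\Sign$ forces $\alpha_1\leq\alpha_2+\cdots+\alpha_h$; but you never determine when the residual term actually equals $(-1)^{\alpha_1-1}$, and when it does not you only say that one should ``replace $\beta_0$ by another near-hook shape,'' explicitly deferring the remaining work to Sections~\ref{s1} and~\ref{s2} --- which are the paper's own proof of this theorem. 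That deferred case analysis is the proof: the paper splits on $\alpha_2\leq\alpha_3+\cdots+\alpha_h$ versus $\alpha_2>\alpha_3+\cdots+\alpha_h$, in the latter case introduces the minimal $k$ with $\alpha_k+\cdots+\alpha_h<\alpha_1-\alpha_2$ and $x=\alpha_k+\cdots+\alpha_h$, and then exhibits about a dozen different shapes (for instance $(|\alpha|-\alpha_1,x+1,1^{\alpha_1-x-1})$, $(|\alpha|-\alpha_1,\alpha_2+1,1^{\alpha_1-\alpha_2-1})$, $(|\alpha|-\alpha_1,\alpha_1)$, $(\alpha_1,\alpha_1)$, $(|\alpha|-\alpha_1-2,\alpha_1-2,2,2)$, $(|\alpha|-\alpha_1-\alpha_{h-1}+1,3,3,2^{\alpha_{h-1}-3},1^{\alpha_1-\alpha_{h-1}-1})$, plus case-by-case tables when $\alpha_2\leq\alpha_3+\cdots+\alpha_h$), each verified by an iterated Murnaghan--Nakayama computation, together with the identification of the exceptional alignments (such as $\alpha_1=\alpha_2+\alpha_{k-1}+\cdots+\alpha_h-1$ with tail segment $(a,a-1,1)$) where the first candidate cancels and a different $\beta$ is needed. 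None of this appears in your write-up, so what you have is a strategy outline, not a proof.

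There is also a factual error in your discussion of the excluded case: you claim $(5,4,3,2,1)$ ``will turn out to be a genuine sign partition, so no such $\beta$ can exist.'' It is not: $(5,4,3,2,1)\notin\Sign$, and by the paper's main theorem it is not a sign partition; indeed $\chi^{(4,4,4,3)}_{(5,4,3,2,1)}=-2$. It is excluded from the statement only because no witness can be chosen satisfying the extra constraint $h^\beta_{2,1}=\alpha_1=5$ (the witness $(4,4,4,3)$ has $h_{2,1}=6$), which is why the induction in the proof of Theorem~\ref{t1} must handle this configuration separately with the witnesses $(4+\gamma_1+\cdots+\gamma_{i-2},4,4,3)$ and $(15+\gamma_1+\cdots+\gamma_{i-3},2,1,1,1,1)$. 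Your reading of the exception would leave that final step of the classification unjustified.
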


The other direction of Theorem \ref{t1} will be proved using Lemma \ref{l2} and the results from Section \ref{s3}, where we prove that the partitions $(\gamma_{s+1},\ldots,\gamma_r)$ are sign partitions.

References about results on partitions and irreducible characters of $S_n$ can be found in \cite{b1} and \cite{b4}.

\section{Proof of Theorem \ref{t2} for $\alpha_2\leq\alpha_3+\ldots+\alpha_h$}\label{s1}

In this section we will prove Theorem \ref{t2} in the case where $\alpha_2\leq \alpha_3+\ldots+\alpha_h$. Since by assumption $h\geq 3$ and $(\alpha_2,\ldots,\alpha_h)\in\Sign$, we have that
\begin{eqnarray*}
(\alpha_2,\ldots,\alpha_h)\!\!&\!\!\in\!\!&\!\!\{(1,1),(3,2,1,1),(5,3,2,1)\}\!\cup\!\{(a,a-1,1):a\geq 2\}\\
&&\hspace{6pt}\cup\{(a,a-1,2,1):a\geq 4\}\!\cup\!\{(a,a-1,3,1):a\geq 5\}.
\end{eqnarray*}
Also $\alpha_1\leq \alpha_2+\ldots+\alpha_h$ as $\alpha\not\in\Sign$ and by assumption $\alpha_1>\alpha_2$. If
\begin{eqnarray*}
(\alpha_2,\ldots,\alpha_h)\!\!&\!\!\in\!\!&\!\!\{(1,1),(3,2,1,1),(5,3,2,1)\}\!\cup\!\{(a,a-1,1):2\leq a\leq 4\}\\
&&\hspace{6pt}\cup\{(a,a-1,2,1):4\leq a\leq 8\}\!\cup\!\{(a,a-1,3,1):5\leq a\leq 10\}
\end{eqnarray*}
there are only finitely many such $\alpha$ and it can be checked that for each one of them Theorem \ref{t2} holds.

For $(\alpha_2,\ldots,\alpha_h)=(a,a-1,1)$ with $a\geq 5$ let
\[\beta:=\left\{\begin{array}{ll}
(2a,2,1^{\alpha_1-2}),&a+2\leq\alpha_1\leq 2a-2\mbox{ or }\alpha_1=2a,\\
(a-1,a-1,a-1,4),&\alpha_1=a+1,\\
(2a,\alpha_1),&\alpha_1=2a-1.
\end{array}\right.\]

For $(\alpha_2,\ldots,\alpha_h)=(a,a-1,2,1)$ with $a\geq 9$ let
\[\beta:=\left\{\begin{array}{ll}
(2a+2,4,1^{\alpha_1-4}),&a+4\leq \alpha_1\leq 2a-2\mbox{ or }2a\leq\alpha_1\leq 2a+2,\\
(2a+2,\alpha_1-1,1),&\alpha_1=a+1,\\
(2a+2,2,1^{\alpha_1-2}),&a+2\leq \alpha_1\leq a+3,\\
(2a+2,\alpha_1),&\alpha_1=2a-1.
\end{array}\right.\]

For $(\alpha_2,\ldots,\alpha_h)=(a,a-1,3,1)$ with $a\geq 11$ let
\[\beta:=\left\{\begin{array}{ll}
(2a+3,5,1^{\alpha_1-5}),&a+5\leq \alpha_1\leq 2a-2\mbox{ or }2a\leq \alpha_1\leq 2a+3,\\
(2a+3,2,1^{\alpha_1-2}),&\alpha_1=a+1\mbox{ or }\alpha_1=a+4,\\
(2a+3,\alpha_1-2,1,1),&\alpha_1=a+2,\\
(2a+3,\alpha_1),&\alpha_1=a+3\mbox{ or }\alpha_1=2a-1.
\end{array}\right.\]

It's easy to check that in each of the above cases $\beta$ is a partition and that $h^\beta_{2,1}=\alpha_1$. In each of the above cases in can also be proved that $\chi^\beta_\alpha\not\in\{0,\pm1\}$.

Assume that $(\alpha_2,\ldots,\alpha_h)=(a,a-1,1)$ and $a+2\leq\alpha_1\leq 2a-2$, that $(\alpha_2,\ldots,\alpha_h)=(a,a-1,2,1)$ and $a+4\leq\alpha_1\leq 2a-2$ or that $(\alpha_2,\ldots,\alpha_h)=(a,a-1,3,1)$ and $a+5\leq\alpha_1\leq 2a-2$. In either case $h_{1,\beta_2+1}=2a-2\geq\alpha_1$. As $h_{2,1}^\beta=\alpha_1$ it follows from the Murnaghan-Nakayama formula that
\[\chi^\beta_\alpha=(-1)^{\alpha_1-\beta_2}\chi^{(|\alpha|-\alpha_1)}_{(\alpha_2,\ldots,\alpha_h)}+\chi^{(|\alpha|-2\alpha_1,\beta_2,1^{\alpha_1-\beta_2})}_{(\alpha_2,\ldots,\alpha_h)}.\]
Since by assumption
\begin{eqnarray*}
h_{3,1}^{(|\alpha|-2\alpha_1,\beta_2,1^{\alpha_1-\beta_2})}&=&\alpha_1-\beta_2\geq a,\\
h_{1,2}^{(|\alpha|-2\alpha_1,\beta_2,1^{\alpha_1-\beta_2})}&=&|\alpha|-2\alpha_1\leq a-2,
\end{eqnarray*}
and $\alpha_2=a$, we have that
\[\chi^\beta_\alpha=(-1)^{\alpha_1-\beta_2}+(-1)^{\alpha_2-1}\chi^{(|\alpha|-2\alpha_1,\beta_2,1^{\alpha_1-\beta_2-\alpha_2})}_{(\alpha_3,\ldots,\alpha_h)}.\]
By definition of $\beta$
\begin{eqnarray*}
h_{1,1}^{(|\alpha|-2\alpha_1,\beta_2,1^{\alpha_1-\beta_2-a})}&=&|\alpha|-2\alpha_1+\alpha_1-\beta_2-\alpha_2+1\\
&=&\alpha_3+\ldots+\alpha_h-(\alpha_4+\ldots+\alpha_h+1)+1\\
&=&\alpha_3.
\end{eqnarray*}
So
\[\chi^\beta_\alpha=(-1)^{\alpha_1-\beta_2}+(-1)^{\alpha_2-1+\alpha_1-\beta_2-\alpha_2+1}\chi^{(\beta_2-1)}_{(\alpha_4,\ldots,\alpha_h)}=(-1)^{\alpha_1-\beta_2}2.\]

The other cases can be computed similarly.

\section{Proof of Theorem \ref{t2} for $\alpha_2>\alpha_3+\ldots+\alpha_h$}\label{s2}

%In this section we will consider the case where $\alpha$ satisfies the following assumption.

%\begin{ass}\label{a1}
%The partition $\alpha$ satisfies:
%\begin{itemize}
%\item
%$\alpha_3+\ldots+\alpha_h<\alpha_2<\alpha_1\leq\alpha_2+\ldots+\alpha_h$,

%\item
%$(\alpha_2,\ldots,\alpha_h)$ is as in the text of Theorem \ref{t1}.
%\end{itemize}
%\end{ass}

In this section we will prove Theorem \ref{t2} for $\alpha_2>\alpha_3+\ldots+\alpha_h$. Again, from Lemma \ref{l2}, as $\alpha\not\in\Sign$ but $(\alpha_2,\ldots,\alpha_h)\in\Sign$, we have that $\alpha_1\leq\alpha_2+\ldots+\alpha_h$.

Throughout this section let $k$ be minimal such that
\[\alpha_k+\ldots+\alpha_h<\alpha_1-\alpha_2.\]
Since $\alpha_1\leq \alpha_2+\ldots+\alpha_h$, it follows that $4\leq k\leq h+1$. Also define
\[x:=\alpha_k+\ldots+\alpha_h.\]

%Since $(\alpha_2,\ldots,\alpha_h)$ is as in the text of Theorem \ref{t1} we have that 
%\begin{equation}\label{eq1}
%\alpha_j>x
%\end{equation}
%for $j\leq k-2$.

\begin{theor}
Assume that the following hold:
\begin{itemize}
\item
$\alpha\not\in\Sign$, $(\alpha_2,\ldots,\alpha_h)\in\Sign$ and $\alpha_1>\alpha_2>\alpha_3+\ldots+\alpha_h$,

\item
$k\leq h$,

\item
$\alpha_1-\alpha_2$ is not a part of $\alpha$,

\item
$\alpha_{k-1}>x$.
\end{itemize}
Then $\beta=(|\alpha|-\alpha_1,x+1,1^{\alpha_1-x-1})$ is a partition, $h_{2,1}^\beta=\alpha_1$ and $\chi^\beta_\alpha=(-1)^{\alpha_1-x-1}2$.
\end{theor}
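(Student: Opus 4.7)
The plan is to compute $\chi^\beta_\alpha$ by iterated Murnaghan--Nakayama, starting with an $\alpha_1$-rim hook. I would first verify that $\beta=(|\alpha|-\alpha_1,x+1,1^{\alpha_1-x-1})$ is a partition: $\beta_1=\alpha_2+\ldots+\alpha_h\ge x+1=\beta_2$ because $k\ge 4$ forces $\alpha_2+\ldots+\alpha_{k-1}\ge 1$, and $\alpha_1-x-1\ge 0$ because $\alpha_1-\alpha_2>x$ and $\alpha_2\ge 1$. The identity $h_{2,1}^\beta=(x+1)-1+(\alpha_1-x+1)-2+1=\alpha_1$ is then immediate.

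Next I would identify all $\alpha_1$-rim hooks in $\beta$ via a hook-length analysis. There is always one through $(2,1)$, which removes row~$2$ and the column of $1$'s and leaves the single row $(|\alpha|-\alpha_1)$ with leg~$\alpha_1-x-1$; it contributes $(-1)^{\alpha_1-x-1}$. A second $\alpha_1$-rim hook sits in the top rows, its structure depending on the sign of $|\alpha|-2\alpha_1-x$. In \emph{Case~B} ($|\alpha|-2\alpha_1>x$) it lies entirely in row~$1$ with leg~$0$, so its removal leaves $(|\alpha|-2\alpha_1,x+1,1^{\alpha_1-x-1})$; in \emph{Case~A} ($|\alpha|-2\alpha_1<x$) it dips into row~$2$ with leg~$1$, leaving $(x,|\alpha|-2\alpha_1+1,1^{\alpha_1-x-1})$. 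The borderline $|\alpha|-2\alpha_1=x$ is excluded by the hypotheses: it forces $\alpha_3+\ldots+\alpha_{k-1}=\alpha_1-\alpha_2$, so for $k=4$ one would have $\alpha_3=\alpha_1-\alpha_2$, contradicting that $\alpha_1-\alpha_2$ is not a part of~$\alpha$; for $k\ge 5$ the minimality of $k$ together with $\alpha_{k-1}>x$ yields $\alpha_{k-1}>x\ge\alpha_3+\ldots+\alpha_{k-2}\ge\alpha_3\ge\alpha_{k-1}$, again a contradiction.

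To evaluate the contribution of the second rim hook I would continue the recursion on the residual. In both Cases~A and~B, the $\alpha_2$-rim hook of the residual is uniquely located in the long column of $1$'s at $(\alpha_1-\alpha_2-x+2,1)$, with uniqueness following from hook-length checks (notably $h_{2,1}$ of the residual equals~$\alpha_1$ in Case~B and $\alpha_2+\ldots+\alpha_{k-1}$ in Case~A, in either case strictly larger than~$\alpha_2$). The hypothesis that $\alpha_1-\alpha_2$ is not a part of~$\alpha$ enters at the next step: in Case~B the new shape satisfies $h_{2,1}=\alpha_1-\alpha_2$, while in Case~A it satisfies $h_{1,1}=\alpha_1-\alpha_2$; in each configuration this prevents a spurious $\alpha_3$-rim hook at the offending cell, ensuring uniqueness of the $\alpha_3$-rim hook. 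Iterating, the successive rim hooks come from the leg, the top rows, or as a border hook depending on $k$ and the case, and the residual eventually reduces to a simple shape (typically $(x)$) on which the trivial character contributes~$1$. A sign tally shows the second rim hook also contributes $(-1)^{\alpha_1-x-1}$, so $\chi^\beta_\alpha=2(-1)^{\alpha_1-x-1}$.

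The main obstacle will be the combinatorial bookkeeping of the iterated recursion: at each of the $h$ steps one must use hook-length computations in shapes of the form $(a,b,1^c)$ to verify uniqueness of the rim hook being removed, and track the accumulated sign across the several subcases of $k$ and of Cases~A and~B.
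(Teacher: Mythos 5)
Your setup matches the paper's proof almost exactly: the two $\alpha_1$-rim hooks of $\beta$, the case split on the sign of $|\alpha|-2\alpha_1-x$, the exclusion of the borderline case $|\alpha|=2\alpha_1+x$ via the hypothesis that $\alpha_1-\alpha_2$ is not a part (your argument for $k\ge 5$, using minimality of $k$ and $\alpha_{k-1}>x$, is correct and in fact cleaner than the paper's appeal to $\Sign$), and the location of the unique $\alpha_2$-hook in the first column at $(\alpha_1-\alpha_2-x+2,1)$; the observations $h_{2,1}=\alpha_1-\alpha_2$ (your Case B) and $h_{1,1}=\alpha_1-\alpha_2$ (your Case A) in the residuals are also right and do pinpoint where the ``not a part'' hypothesis enters.

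The gap is that everything after the $\alpha_2$-step, which is where the paper does most of its work, is only asserted (``iterating \dots a sign tally shows''), and two genuine issues are buried there. First, in your Case A the claimed ``uniqueness of the $\alpha_3$-rim hook'' presupposes that $\epsilon=(x,|\alpha|-2\alpha_1+1,1^{\alpha_1-\alpha_2-x-1})$ has an $\alpha_3$-hook at all; it does only because $k=4$ is forced in this case (if $k\ge 5$ there would be none, the contribution would be $0$ and the total would be $\pm 1$, not $\pm 2$). This is provable from the Case A inequality $\alpha_2+\ldots+\alpha_{k-1}<\alpha_1$, minimality of $k$ and $\alpha_{k-1}>x$ (for $k\ge5$ one gets $\alpha_3\le\alpha_3+\ldots+\alpha_{k-2}<x<\alpha_{k-1}\le\alpha_3$), but it must be proved, not skipped. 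Second, in Case B the iteration is not a single uniform pattern: one must split according to $\alpha_{k-1}>\alpha_1-\alpha_2$ or $\alpha_{k-1}<\alpha_1-\alpha_2$ (equality again excluded by the hypothesis). In the first subcase the hooks $\alpha_3,\ldots,\alpha_{k-2}$ leave row $1$ and the $\alpha_{k-1}$-hook is the full rim at $(1,1)$, with a degenerate possibility $\lambda_1=x+1$ forcing $k=4$ that needs separate mention; in the second subcase $k\ge5$, the $\alpha_{k-2}$-hook wraps from row $1$ into row $2$, and the ``not a part'' hypothesis is needed again at the $\alpha_{k-2}$ and $\alpha_{k-1}$ stages (to rule out the cells of hook length $\alpha_1-\alpha_2$), not only at the $\alpha_3$ stage as you state. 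Until these removals, their uniqueness, and the resulting sign $(-1)^{\alpha_1-\alpha_2-x}$ for $\chi^\lambda_{(\alpha_3,\ldots,\alpha_h)}$ are actually verified, the conclusion $\chi^\beta_\alpha=(-1)^{\alpha_1-x-1}2$ is not established; this is precisely the content of the paper's proof.
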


\begin{proof}
By definition and by assumption
\[|\alpha|-\alpha_1=\alpha_2+\ldots+\alpha_h\geq \alpha_1\geq x+1,\]
from which follows that $\beta$ is a partition. Also clearly $h_{2,1}^\beta=\alpha_1$. We will now prove that $\chi^\beta_\alpha=(-1)^{\alpha_1-x-1}2$.

Assume first that $2\alpha_1+x>|\alpha|$. Then
\[2=|\alpha|-\alpha_1-(\alpha_2+\ldots+\alpha_h)+2\leq |\alpha|-2\alpha_1+2\leq x+1\]and so
\[h_{1,|\alpha|-2\alpha_1+2}^\beta=|\alpha|-\alpha_1+2-(|\alpha|-2\alpha_1+2)=\alpha_1.\]
It follows that
\[\chi^\beta_\alpha=(-1)^{\alpha_1-x-1}\chi^{(|\alpha|-\alpha_1)}_{(\alpha_2,\ldots,\alpha_h)}-\chi^\delta_{(\alpha_2,\ldots,\alpha_h)}=(-1)^{\alpha_1-x-1}-\chi^\delta_{(\alpha_2,\ldots,\alpha_h)},\]
where $\delta:=(x,|\alpha|-2\alpha_1+1,1^{\alpha_1-x-1})$. So it is enough to prove that $\chi^\delta_{(\alpha_2,\ldots,\alpha_h)}=(-1)^{\alpha_1-x}$. As $h_{1,2}^\delta\leq x<\alpha_{k-1}<\alpha_2$  by assumption, we have that
\[\chi^\delta_{(\alpha_2,\ldots,\alpha_h)}=(-1)^{\alpha_2-1}\chi^\epsilon_{(\alpha_3,\ldots,\alpha_h)},\]
where $\epsilon:=(x,|\alpha|-2\alpha_1+1,1^{\alpha_1-\alpha_2-x-1})$ (as by definition of $x$, $\alpha_1-\alpha_2>x$, so that $\epsilon$ is a partition). By minimality of $k$,
\[|\epsilon|<2x+\alpha_1-\alpha_2-x\leq 2x+\alpha_{k-1}.\]
Also, as $(\alpha_2,\ldots,\alpha_h)\in\Sign$ and $k-2\geq 2$,
\[\alpha_3+\ldots+\alpha_h=|\epsilon|<2(\alpha_k+\ldots+\alpha_h)+\alpha_{k-1}<\alpha_{k-2}+\ldots+\alpha_h\]
and then $k-2<3$. Since $k\geq 4$ it follows that $k=4$. As by induction $\alpha_3>x$,
\[\chi^\delta_{(\alpha_2,\ldots,\alpha_h)}=(-1)^{\alpha_2-1}\chi^\epsilon_{(\alpha_3,\ldots,\alpha_h)}=(-1)^{\alpha_2-1+\alpha_1-\alpha_2-x-1}\chi^{(x)}_{(\alpha_4,\ldots,\alpha_h)}=(-1)^{\alpha_1-x}\]
and then the theorem holds in this case.

Assume now that $2\alpha_1+x<|\alpha|$. Then
\[x+1<|\alpha|-2\alpha_1+1\leq |\alpha|-\alpha_1\]
and so
\[h_{1,|\alpha|-2\alpha_1+1}^\beta=|\alpha|-\alpha_1+1-(|\alpha|-2\alpha_1+1)=\alpha_1.\]
By definition $\alpha_2\leq \alpha_1-x-1$ and by assumption $\alpha_2>\alpha_3+\ldots+\alpha_h$, so that any partition of $\alpha_2+\ldots+\alpha_h$ has at most one hook of length $\alpha_2$. So
\begin{eqnarray*}
\chi^\beta_\alpha&=&(-1)^{\alpha_1-x-1}\chi^{(|\alpha|-\alpha_1)}_{(\alpha_2,\ldots,\alpha_h)}+\chi^{(|\alpha|-2\alpha_1,x+1,1^{\alpha_1-x-1})}_{(\alpha_2,\ldots,\alpha_h)}\\
&=&(-1)^{\alpha_1-x-1}+(-1)^{\alpha_2-1}\chi^\lambda_{(\alpha_3,\ldots,\alpha_h)},
\end{eqnarray*}
where $\lambda=(|\alpha|-2\alpha_1,x+1,1^{\alpha_1-\alpha_2-x-1})$. So it is enough to prove that $\chi^\lambda_{(\alpha_3,\ldots,\alpha_h)}=(-1)^{\alpha_1-\alpha_2-x}$.

First assume that $\alpha_{k-1}>\alpha_1-\alpha_2$. Then
\[h_{2,1}^\lambda=\alpha_1-\alpha_2<\alpha_j\]
for $3\leq j\leq k-1$ and
\[h_{1,x+2}^\lambda=|\lambda|-x-1-\alpha_1+\alpha_2\geq |\lambda|-\alpha_{k-1}-\ldots-\alpha_h=\alpha_3+\ldots+\alpha_{k-2}\]
if $x+2\leq \lambda_1$. If $\lambda_1=x+1$ then
\[|\lambda|=x+\alpha_1-\alpha_2+1\leq \alpha_{k-1}+\ldots+\alpha_h\leq \alpha_3+\ldots+\alpha_h=|\lambda|\]
and so in this case $k=4$. In either case
\begin{eqnarray*}
\chi^\lambda_{(\alpha_3,\ldots,\alpha_h)}&=&\chi^{(\alpha_{k-1}-\alpha_1+\alpha_2+x,x+1,1^{\alpha_1-\alpha_2-x-1})}_{(\alpha_{k-1},\ldots,\alpha_h)}\\
&=&(-1)^{\alpha_1-\alpha_2-x}\chi^{(x)}_{(\alpha_k,\ldots,\alpha_h)}\\
&=&(-1)^{\alpha_1-\alpha_2-x}
\end{eqnarray*}
and so the theorem holds also in this case.

Now assume that $\alpha_{k-1}<\alpha_1-\alpha_2$. Then $k\geq 5$ (otherwise $\alpha_1>\alpha_2+\ldots+\alpha_h$) and
\[\alpha_{k-1}+x=\alpha_{k-1}+\ldots+\alpha_h\geq \alpha_1-\alpha_2\]
by definition of $k$. Since $\alpha_1-\alpha_2-x-1<\alpha_{k-1}$ by minimality of $k$ and since by assumption $x<\alpha_{k-1}$ and $\alpha_1-\alpha_2$ is not a part of $\alpha$, it follows similarly to the previous case that
\[\chi^\lambda_{(\alpha_3,\ldots,\alpha_h)}=\chi^\mu_{(\alpha_{k-2},\ldots,\alpha_h)},\]
where $\mu:=(\alpha_{k-2}+\alpha_{k-1}-\alpha_1+\alpha_2+x,x+1,1^{\alpha_1-\alpha_2-x-1})$. As
\[2\leq\alpha_{k-1}-\alpha_1+\alpha_2+x+2\leq x+1\]
and so
\[h_{1,\alpha_{k-1}-\alpha_1+\alpha_2+x+2}^\mu\!=\!\alpha_{k-2}+\alpha_{k-1}-\alpha_1+\alpha_2+x+2-(\alpha_{k-1}-\alpha_1+\alpha_2+x+2)\!=\!\alpha_{k-2}.\]
From $\alpha_1-\alpha_2$ not being a part of $\alpha$ and
\[x,\alpha_1-\alpha_2-x-1<\alpha_{k-1}<\alpha_{k-2}\]
it follows that
\[\chi^\mu_{(\alpha_{k-2},\ldots,\alpha_h)}=-\chi^{\nu}_{(\alpha_{k-1},\ldots,\alpha_h)}=(-1)^{\alpha_1-\alpha_2-x},\]
with $\nu=(x,\alpha_{k-1}-\alpha_1+\alpha_2+x+1,1^{\alpha_1-\alpha_2-x-1})$, and so the theorem holds also in this case.

At last assume that $2\alpha_1+x=|\alpha|$. Then
\[\alpha_1=|\alpha|-\alpha_1-x=\alpha_2+\ldots+\alpha_{k-1}.\]
By definition of $k$ we then have that
\[\alpha_3+\ldots+\alpha_{k-1}=\alpha_1-\alpha_2\leq\alpha_{k-1}+\ldots+\alpha_h\]
and so
\[\alpha_3+\ldots+\alpha_{k-2}\leq\alpha_k+\ldots+\alpha_h.\]
If $k\geq 5$ then $k-2\geq 3$ and then $\alpha_{k-2}\leq \alpha_k+\ldots+\alpha_h$. This gives a contradiction with $(\alpha_2,\ldots,\alpha_h)\in\Sign$. So $k=4$ and then $\alpha_1-\alpha_2=\alpha_3$ is a part of $\alpha$, which contradicts the assumptions.
\end{proof}

\begin{theor}
Assume that the following hold:
\begin{itemize}
\item
$\alpha\not\in\Sign$, $(\alpha_2,\ldots,\alpha_h)\in\Sign$ and $\alpha_1>\alpha_2>\alpha_3+\ldots+\alpha_h$,

\item
$k\leq h$,

\item
$\alpha_1-\alpha_2$ is not a part of $\alpha$,

\item
$\alpha_{k-1}\leq x$,

\item
none of the following holds:
\begin{itemize}
\item
$(\alpha_{k-1},\ldots,\alpha_h)=(3,2,1,1)$ and $\alpha_1=\alpha_2+\alpha_{k-1}+\ldots+\alpha_h$,

\item
$(\alpha_{k-1},\ldots,\alpha_h)=(5,3,2,1)$ and $\alpha_1=\alpha_2+\alpha_{k-1}+\ldots+\alpha_h$,

\item
$(\alpha_{k-1},\ldots,\alpha_h)=(a,a-1,1)$ with $a\geq 2$ and $\alpha_1=\alpha_2+\alpha_{k-1}+\ldots+\alpha_h-1$,

\item
$(\alpha_{k-1},\ldots,\alpha_h)=(a,a-1,2,1)$ with $a\geq 4$ and $\alpha_1=\alpha_2+\alpha_{k-1}+\ldots+\alpha_h-3$,

\item
$(\alpha_{k-1},\ldots,\alpha_h)=(a,a-1,3,1)$ with $a\geq 5$ and $\alpha_1=\alpha_2+\alpha_{k-1}+\ldots+\alpha_h-4$.
\end{itemize}
\end{itemize}
Then $\beta=(|\alpha|-\alpha_1,x+1,1^{\alpha_1-x-1})$ is a partition, $h_{2,1}^\beta=\alpha_1$ and $\chi^\beta_\alpha=(-1)^{\alpha_1-x-1}2$.
\end{theor}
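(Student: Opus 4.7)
The plan is to mimic the structure of the preceding theorem, using the same $\beta$, the same verification that $\beta$ is a partition with $h_{2,1}^\beta=\alpha_1$, and the same three-way split on the sign of $|\alpha|-2\alpha_1-x$. The check that $\beta$ is a valid partition is unchanged, since $|\alpha|-\alpha_1=\alpha_2+\ldots+\alpha_h\geq\alpha_1$ and, by the minimality of $k$, $\alpha_1>\alpha_2+x\geq x+1$.

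I would first apply Murnaghan--Nakayama to extract an $\alpha_1$-rim hook from $\beta$. As in the previous theorem, one contribution comes from the vertical rim hook in column~$1$ and reduces, after collapsing to a single row, to $(-1)^{\alpha_1-x-1}\chi^{(|\alpha|-\alpha_1)}_{(\alpha_2,\ldots,\alpha_h)}=(-1)^{\alpha_1-x-1}$; the other (when it exists) comes from a horizontal rim hook in row~$1$, leaving a partition $\delta$ of essentially the same hook-like shape as in the previous theorem. The remaining task is to show $\chi^\delta_{(\alpha_2,\ldots,\alpha_h)}=(-1)^{\alpha_1-x}$, after which the identity $\chi^\beta_\alpha=(-1)^{\alpha_1-x-1}\cdot 2$ follows.

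To evaluate $\chi^\delta_{(\alpha_2,\ldots,\alpha_h)}$, I would iteratively peel off rim hooks of lengths $\alpha_2,\alpha_3,\ldots,\alpha_{k-2}$. For each such $j\leq k-2$, the $\Sign$ inequality $\alpha_j>\alpha_{j+1}+\ldots+\alpha_h$ forces the rim hook to be the vertical strip at the bottom of column~$1$, contributing a sign $(-1)^{\alpha_j-1}$ and a uniquely determined residual shape. The essential new ingredient is that here $\alpha_{k-1}\leq x$, so $(\alpha_{k-1},\ldots,\alpha_h)$ is one of the finite exceptional tail patterns of $\Sign$: $(1,1)$, $(3,2,1,1)$, $(5,3,2,1)$, $(a,a-1,1)$, $(a,a-1,2,1)$ or $(a,a-1,3,1)$. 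After the peelings, the residual partition has size $\alpha_{k-1}+\ldots+\alpha_h$ and must be paired with this tail cycle type.

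The hypothesis that $\alpha_1-\alpha_2$ is not a part of $\alpha$ rules out an ambiguous rim-hook removal at the step where the vertical strip of length $\alpha_1-\alpha_2$ is taken from the long column of $\delta$; the listed excluded configurations for each tail pattern are precisely those where the resulting small shape, paired with its tail, produces character value $0$ or $\pm 1$ instead of $(-1)^{\alpha_1-x}$. The main obstacle is then the case-by-case verification for the six tail families: for each I would identify the residual shape explicitly as a function of $\alpha_1-\alpha_2-x$, locate its rim hooks of lengths $\alpha_{k-1},\alpha_k,\ldots$, and confirm that outside the exclusions the Murnaghan--Nakayama recursion indeed produces $(-1)^{\alpha_1-x}$. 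The families $(a,a-1,2,1)$ and $(a,a-1,3,1)$ are likely to require further subdivisions, parallel to the $\beta$-choices made in Section~\ref{s1}.
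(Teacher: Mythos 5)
You follow the paper's route in outline: the same $\beta$, the same initial Murnaghan--Nakayama step removing the $\alpha_1$-hook, a reduction to the exceptional tail $(\alpha_{k-1},\ldots,\alpha_h)$ (possible because $\alpha_{k-1}\leq x$ forces the tail to lie in the six listed families), and a case analysis in which the excluded configurations are exactly the values of $y=\alpha_1-\alpha_2-x-1$ where the tail evaluation fails. However, the peeling step as you describe it would fail. After removing the $\alpha_1$-hook and then the $\alpha_2$-hook, the remaining diagram is $(|\alpha|-2\alpha_1,x+1,1^{y})$ (respectively $(x,|\alpha|-2\alpha_1+1,1^{y})$ when $2\alpha_1+x>|\alpha|$, in which case $k=4$ and no further peeling occurs), and by minimality of $k$ one has $y\leq\alpha_{k-1}-1$. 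Hence the first column below the second row contains at most $\alpha_{k-1}-1<\alpha_j$ boxes for every $j\leq k-2$, so there is no vertical rim hook of length $\alpha_j$ at the bottom of column $1$ for $3\leq j\leq k-2$, and the sign $(-1)^{\alpha_j-1}$ you assign to those steps is not available. In the actual computation only the $\alpha_2$-hook comes off column $1$ (sign $(-1)^{\alpha_2-1}$); the hooks of lengths $\alpha_3,\ldots,\alpha_{k-3}$ come off the end of the first row with sign $+1$, and the $\alpha_{k-2}$-hook wraps into the second row with sign $-1$. This is precisely where one needs $x<\alpha_j$ for $j\leq k-2$ (deduced from $(\alpha_2,\ldots,\alpha_h)\in\Sign$) and the hypothesis that $\alpha_1-\alpha_2$ is not a part of $\alpha$: the node $(2,1)$ has hook length exactly $\alpha_1-\alpha_2$, and a part of that size would create a second admissible removal. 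With the signs you propose the bookkeeping does not return $(-1)^{\alpha_1-x-1}2$.

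Beyond this, the substantive content of the theorem is the verification that $\chi^{\lambda_y}_{(\alpha_{k-1},\ldots,\alpha_h)}=(-1)^y$ for $\lambda_y=(x,\alpha_{k-1}-y,1^y)$, to which both sign cases reduce: since $\alpha_{k-1}\leq x$, the partition $\lambda_y$ may have a second $\alpha_{k-1}$-hook in its first row, and one must check for each of the six tails that the corresponding extra Murnaghan--Nakayama term vanishes except precisely at the excluded values of $y$ (for instance $y=\alpha_{k-1}-2$ for $(a,a-1,1)$, ruled out by the hypothesis $\alpha_1\neq\alpha_2+\alpha_{k-1}+\ldots+\alpha_h-1$); this requires explicit identities such as $\chi^{(a-y-1,3,1^y)}_{(a-1,2,1)}=0$ and $\chi^{(3,a-y,1^y)}_{(a-1,3,1)}=0$. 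Your plan only promises this verification, so the main work of the proof is still missing.
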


\begin{proof}
As in the previous theorem we have that $2\alpha_1+x\not=|\alpha|$, since $\alpha_1-\alpha_2$ is not a part of $\alpha$.

Assume first that $2\alpha_1+x>|\alpha|$. From the proof of the previous theorem ($\alpha_2>x$ since $(\alpha_2,\ldots,\alpha_h)\in\Sign$), it is enough to prove that $\chi^\epsilon_{(\alpha_3,\ldots,\alpha_h)}=(-1)^{\alpha_1-\alpha_2-x-1}$, where $\epsilon=(x,|\alpha|-2\alpha_1+1,1^{\alpha_1-\alpha_2-x-1})$. In this case it holds $k=4$ as in the previous theorem.

Assume now that $2\alpha_1+x<|\alpha|$. Since $\alpha_{k-1}\leq x<\alpha_1-\alpha_2$ we have that $\alpha_{k-1}<\alpha_1-\alpha_2$. As $\alpha_1-\alpha_2$ is not a part of $\alpha$ it is enough, from the proof of the previous theorem, to prove that $x<\alpha_j$ for $j\leq k-2$ and that $\chi^{\nu}_{(\alpha_{k-1},\ldots,\alpha_h)}=(-1)^{\alpha_1-\alpha_2-x-1}$, where $\nu=(x,\alpha_{k-1}-\alpha_1+\alpha_2+x+1,1^{\alpha_1-\alpha_2-x-1})$. In order to prove that $x<\alpha_j$ for $j\leq k-2$, it is enough to prove it for $j=k-2$. As $k\geq 4$, so that $k-2\geq 2$, and $(\alpha_2,\ldots,\alpha_h)\in\Sign$, we have that $x=\alpha_k+\ldots+\alpha_h<\alpha_{k-2}$.

In either case it is then enough to prove that $\chi^{\lambda_y}_{(\alpha_{k-1},\ldots,\alpha_h)}=(-1)^y$ for $\lambda_y=(x,\alpha_{k-1}-y,1^y)$, $y=\alpha_1-\alpha_2-x-1$. Notice that $0\leq y\leq\alpha_{k-1}-1$, since $\lambda_y$ is a partition.

Clearly $h_{2,1}^{\lambda_y}=\alpha_{k-1}$. If this is the only $\alpha_{k-1}$-hook of $\lambda$, then it is easy to see that $\chi^{\lambda_y}_{(\alpha_{k-1},\ldots,\alpha_h)}=(-1)^y$. Else, due to hooks lengths being decreasing along both the rows and the columns, $\lambda_y$ has exactly 2 $\alpha_{k-1}$-hooks and there exists $2\leq j\leq x$ with $h_{1,j}^{\lambda_y}=\alpha_{k-1}$.

As $\alpha_{k-1}\leq x$ by assumption
\begin{eqnarray*}
(\alpha_{k-1},\ldots,\alpha_h)&\in&\{(1,1),(3,2,1,1),(5,3,2,1)\}\cup\{(a,a-1,1):a\geq 2\}\\
&&\hspace{12pt}\cup\{(a,a-1,2,1):a\geq 4\}\cup\{(a,a-1,3,1):a\geq 5\}.
\end{eqnarray*}

If $(\alpha_{k-1},\ldots,\alpha_h)=(1,1)$ then $x=1<2$, so no such $j$ exists.

If $(\alpha_{k-1},\ldots,\alpha_h)=(3,2,1,1)$ then $\lambda_y\in\{(4,3),(4,1,1,1)\}$ if such a $j$ exists, and so $y=0$ or $y=2$ respectively. The second case would imply $\alpha_1-\alpha_2-x=3$, which would contradict the assumption. As $\chi^{(4,3)}_{(3,2,1,1)}=1=(-1)^0$ the theorem holds in this case. 

If $(\alpha_{k-1},\ldots,\alpha_h)=(5,3,2,1)$ and there exists such a $j$ then
\[\lambda_y\in\{(6,5),(6,4,1),(6,3,1,1),(6,1^5)\}\]
and then $y=0$, $y=1$, $y=2$ or $y=4$ respectively. In the last case $\alpha_1-\alpha_2-x=5$, which contradicts the assumption. In the other cases $\chi^{(6,5)}_{(5,3,2,1)}=1=(-1)^0$, $\chi^{(6,4,1)}_{(5,3,2,1)}=-1=(-1)^1$ and $\chi^{(6,3,1,1)}_{(5,3,2,1)}=1=(-1)^2$ and so the theorem holds also in this case.

If $(\alpha_{k-1},\ldots,\alpha_h)=(a,a-1,1)$ then there exists such a $j$ if and only if $0\leq y\leq \alpha_{k-1}-2$. If $y=\alpha_{k-1}-2$ then $\alpha_1-\alpha_2-x=\alpha_{k-1}-1$ which contradicts the assumption. In the other cases
\[\chi^{\lambda_y}_{(\alpha_{k-1},\ldots,\alpha_h)}=\chi^{(a,a-y,1^y)}_{(a,a-1,1)}=(-1)^y\chi^{(a)}_{(a-1,1)}-\chi^{(a-y-1,1^{y+1})}_{(a-1,1)}=(-1)^y,\]
since $a-y-2,y+1\geq 1$, so that also $a-y-2,y+1<a-1$. In particular the theorem holds in this case.

If $(\alpha_{k-1},\ldots,\alpha_h)=(a,a-1,2,1)$ then there exists such a $j$ if and only if $y\not=\alpha_{k-1}-3$. For $y=\alpha_{k-1}-4$ we have that $\alpha_1-\alpha_2-x=\alpha_{k-1}-3$, which contradicts the assumptions.

For $0\leq y\leq \alpha_{k-1}-5$ then $j=4$ as $\alpha_{k-1}-y>4$, so that
\[h_{1,4}^{\lambda_y}=a+2+2-4=a.\]
So
\[\chi^{\lambda_y}_{(\alpha_{k-1},\ldots,\alpha_h)}\!=\!\chi^{(a+2,a-y,1^y)}_{(a,a-1,2,1)}\!=\!(-1)^y\chi^{(a+2)}_{(a-1,2,1)}-\chi^{(a-y-1,3,1^y)}_{(a-1,2,1)}\!=\!(-1)^y-\chi^{(a-y-1,3,1^y)}_{(a-1,2,1)}\]
and
\[\chi^{(a-y-1,3,1^y)}_{(a-1,2,1)}=\left\{\begin{array}{ll}
0&y\not=0\\
-\chi^{(2,1)}_{(2,1)}=0&y=0,\\
\end{array}\right.\]
as
\begin{eqnarray*}
h_{1,1}^{(a-y-1,3,1^y)}&=&a,\\
h_{2,1}^{(a-y-1,3,1^y)}&=&y+3<a-1,\\
h_{1,2}^{(a-y-1,3,1^y)}&=&a-y-1\leq a-1,
\end{eqnarray*}
since $0\leq y\leq\alpha_{k-1}-5=a-5$. In particular $\chi^{\lambda_y}_{(\alpha_{k-1},\ldots,\alpha_h)}=(-1)^y$.

For $\alpha_{k-1}-2\leq y\leq\alpha_{k-1}-1$ then $j=3$ as $\alpha_{k-1}-y\leq 2$, so that
\[h_{1,3}^{\lambda_y}=a+2+1-3=a.\]
It follows that
\[\chi^{\lambda_y}_{(\alpha_{k-1},\ldots,\alpha_h)}=\chi^{(a+2,a-y,1^y)}_{(a,a-1,2,1)}=(-1)^y\chi^{(a+2)}_{(a-1,2,1)}+\chi^{(2,a-y,1^y)}_{(a-1,2,1)}=(-1)^y+\chi^{(2,a-y,1^y)}_{(a-1,2,1)}.\]
As
\[\chi^{(2,a-y,1^y)}_{(a-1,2,1)}=\left\{\begin{array}{ll}
\chi^{(2,2,1^{a-2})}_{(a-1,2,1)}=0&y=\alpha_{k-1}-2,\\
\chi^{(2,1^a)}_{(a-1,2,1)}=(-1)^{a-2}\chi^{(2,1)}_{(2,1)}=0&y=\alpha_{k-1}-1,
\end{array}\right.\]
as $a\geq 4$. In particular also in this case $\chi^{\lambda_y}_{(\alpha_{k-1},\ldots,\alpha_h)}=(-1)^y$.

If $(\alpha_{k-1},\ldots,\alpha_h)=(a,a-1,3,1)$ then there exists such a $j$ if and only if $y\not=\alpha_{k-1}-4$. If $y=\alpha_{k-1}-5$ then $\alpha_1-\alpha_2-x=\alpha_k-4$, in contradiction to the assumption.

For $0\leq y\leq \alpha_{k-1}-6$ then $j=5$ as $\alpha_{k-1}-y>5$ and then
\[h_{1,5}^{\lambda_y}=a+3+2-5=a.\]
So
\[\chi^{\lambda_y}_{(\alpha_{k-1},\ldots,\alpha_h)}\!=\!\chi^{(a+3,a-y,1^y)}_{(a,a-1,3,1)}\!=\!(-1)^y\chi^{(a+3)}_{(a-1,3,1)}-\chi^{(a-y-1,4,1^y)}_{(a-1,3,1)}\!=\!(-1)^y-\chi^{(a-y-1,3,1^y)}_{(a-1,3,1)}\]
and
\[\chi^{(a-y-1,4,1^y)}_{(a-1,3,1)}=\left\{\begin{array}{ll}
0&y\not=0,\\
-\chi^{(3,1)}_{(3,1)}=0&y=0,
\end{array}\right.\]
as
\begin{eqnarray*}
h_{1,1}^{(a-y-1,4,1^y)}&=&a,\\
h_{2,1}^{(a-y-1,4,1^y)}&=&y+4<a-1,\\
h_{1,2}^{(a-y-1,4,1^y)}&=&a-y-1\leq a-1,
\end{eqnarray*}
since $0\leq y\leq\alpha_{k-1}-6=a-6$. In particular $\chi^{\lambda_y}_{(\alpha_{k-1},\ldots,\alpha_h)}=(-1)^y$.

For $\alpha_{k-1}-3\leq y\leq\alpha_{k-1}-1$ then $j=4$ as $\alpha_{k-1}-y\leq 3$, so that
\[h_{1,4}^{\lambda_y}=a+3+1-4=a.\]
Then
\[\chi^{\lambda_y}_{(\alpha_{k-1},\ldots,\alpha_h)}=\chi^{(a+3,a-y,1^y)}_{(a,a-1,3,1)}=(-1)^y\chi^{(a+3)}_{(a-1,3,1)}+\chi^{(3,a-y,1^y)}_{(a-1,3,1)}=(-1)^y+\chi^{(3,a-y,1^y)}_{(a-1,3,1)}.\]
As
\[\chi^{(3,a-y,1^y)}_{(a-1,3,1)}=\left\{\begin{array}{ll}
\chi^{(3,3,1^{a-3})}_{(a-1,3,1)}=0&y=\alpha_{k-1}-3,\\
\chi^{(3,2,1^{a-2})}_{(a-1,3,1)}=0&y=\alpha_{k-1}-2,\\
\chi^{(3,1^a)}_{(a-1,3,1)}=(-1)^{a-2}\chi^{(3,1)}_{(3,1)}=0&y=\alpha_{k-1}-1,
\end{array}\right.\]
since $a\geq 5$ it follows that also in this case $\chi^{\lambda_y}_{(\alpha_{k-1},\ldots,\alpha_h)}=(-1)^y$.
\end{proof}

\begin{theor}
Assume that the following hold:
\begin{itemize}
\item
$\alpha\not\in\Sign$, $(\alpha_2,\ldots,\alpha_h)\in\Sign$ and $\alpha_1>\alpha_2>\alpha_3+\ldots+\alpha_h$,

\item
$k\leq h$,

\item
$\alpha_1-\alpha_2$ is not a part of $\alpha$,

\item
$(\alpha_{k-1},\ldots,\alpha_h)\in\{(3,2,1,1),(5,3,2,1)\}$,

\item
$\alpha_1=\alpha_2+\alpha_{k-1}+\ldots+\alpha_h$.
\end{itemize}
Let $c$ equal to 3 if $(\alpha_{k-1},\ldots,\alpha_h)=(3,2,1,1)$ or equal to 6 if $(\alpha_{k-1},\ldots,\alpha_h)=(5,3,2,1)$.

Then $\beta:=(|\alpha|-\alpha_1,\alpha_1-c,1^c)$ is a partition with $h_{2,1}^\beta=\alpha_1$ and $\chi^\beta_\alpha=(-1)^c2$.
\end{theor}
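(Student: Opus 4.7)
The plan is to mirror the strategy of the preceding two theorems: verify the combinatorial properties of $\beta$, apply the Murnaghan-Nakayama formula, and then finish via an iterated reduction exploiting the $\Sign$ structure of $(\alpha_2,\ldots,\alpha_h)$.

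I first check that $\beta = (|\alpha|-\alpha_1,\,\alpha_1-c,\,1^c)$ is a partition. Since $\alpha\notin\Sign$ and $\alpha_1>\alpha_2$, Lemma \ref{l2} forces $\alpha_1\leq\alpha_2+\ldots+\alpha_h$, hence $|\alpha|-\alpha_1\geq\alpha_1>\alpha_1-c$. The identity $h_{2,1}^\beta=(\alpha_1-c-1)+c+1=\alpha_1$ is immediate. Beyond the $(2,1)$-hook, a direct hook-length analysis in row $1$ of $\beta$ locates a second $\alpha_1$-hook at $(1,m+2)$ with leg $1$, where $m:=|\alpha|-2\alpha_1=\alpha_3+\ldots+\alpha_{k-2}$. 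The alternative configurations are a gap case $m=\alpha_1-c-1$ and a large-$m$ case $m\geq\alpha_1-c$, both of which are excluded: combining $\alpha_2>\alpha_3+\ldots+\alpha_h$ with $\alpha_1=\alpha_2+C$ (where $C:=\alpha_{k-1}+\ldots+\alpha_h\in\{7,11\}$) yields respectively $c+1>2C$ and $c>2C$, each impossible for $c\in\{3,6\}$. The Murnaghan-Nakayama formula then gives
\[\chi^\beta_\alpha=(-1)^c-\chi^\nu_{(\alpha_2,\ldots,\alpha_h)},\qquad\nu:=(\alpha_1-c-1,\,m+1,\,1^c),\]
so it suffices to prove $\chi^\nu_{(\alpha_2,\ldots,\alpha_h)}=(-1)^{c+1}$.

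The heart of the argument is an iterated Murnaghan-Nakayama reduction of $\nu$ against $(\alpha_2,\ldots,\alpha_h)$, stripping off $\alpha_2,\alpha_3,\ldots,\alpha_{k-2}$ in turn. Since $(\alpha_2,\ldots,\alpha_h)\in\Sign$ with $\alpha_2>\alpha_3+\ldots+\alpha_h$, the parts $\alpha_2,\ldots,\alpha_{k-2}$ each exceed the sum of the parts after them, and I use this to show that at every stage the relevant $\alpha_i$-hook is unique and sits in row $1$, with leg $0$ or $1$ read off directly from the hook-length formula. After all the removals the residue is $(3,1^4)$ when $c=3$ and $(4,1^7)$ when $c=6$, while the cycle type has shrunk to the tail $(\alpha_{k-1},\ldots,\alpha_h)$, equal to $(3,2,1,1)$ or $(5,3,2,1)$ respectively. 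Two direct Murnaghan-Nakayama computations yield $\chi^{(3,1^4)}_{(3,2,1,1)}=1$ and $\chi^{(4,1^7)}_{(5,3,2,1)}=-1$, and combined with the accumulated intermediate signs these produce exactly $(-1)^{c+1}$.

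The main obstacle is the careful bookkeeping required at each reduction step to confirm that the $\alpha_i$-hook of the required length is unique and that its leg contributes the expected sign. Uniqueness rests on the sharp size gaps forced by the $\Sign$ condition on $(\alpha_2,\ldots,\alpha_h)$ together with the balance $\alpha_1-\alpha_2=\alpha_{k-1}+\ldots+\alpha_h$, which both fixes where the second $\alpha_1$-hook of $\beta$ lies and steers the intermediate partitions into the correct final form.
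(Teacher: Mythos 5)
Your proposal is correct and follows essentially the same route as the paper: locate the second $\alpha_1$-hook of $\beta$ in row 1 at column $|\alpha|-2\alpha_1+2$, apply Murnaghan--Nakayama to reduce to $\chi^\nu_{(\alpha_2,\ldots,\alpha_h)}$ with $\nu=(\alpha_1-c-1,\alpha_3+\ldots+\alpha_{k-2}+1,1^c)$, and strip off $\alpha_2,\ldots,\alpha_{k-2}$ one hook at a time (uniqueness coming from the superincreasing structure forced by $(\alpha_2,\ldots,\alpha_h)\in\Sign$), ending at the residues $(3,1^4)$ and $(4,1^7)$ with the same base values $\chi^{(3,1^4)}_{(3,2,1,1)}=1$ and $\chi^{(4,1^7)}_{(5,3,2,1)}=-1$. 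The only cosmetic differences are your exclusion of the ``gap'' and ``large-$m$'' placements of the second hook via the inequalities $c+1>2C$ and $c>2C$ (the paper gets this directly from $\alpha_3+\ldots+\alpha_{k-2}+2<\alpha_3+\ldots+\alpha_h-c<\alpha_1-c$) and your uniform iteration in place of the paper's explicit $k=4$ versus $k>4$ split.
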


\begin{proof}
Since $c<\alpha_2<\alpha_1<\alpha_2+\ldots+\alpha_h=|\alpha|-\alpha_1$ by assumption on $\alpha$, it follows that $\beta$ is a partition. Clearly $h_{2,1}^\beta=\alpha_1$.

Also, from
\[2\leq \alpha_3+\ldots+\alpha_{k-2}+2<\alpha_3+\ldots+\alpha_h-c<\alpha_1-c\]
we have that
\begin{eqnarray*}
h_{1,\alpha_3+\ldots+\alpha_{k-2}+2}^\beta&=&|\alpha|-\alpha_1+2-(\alpha_3+\ldots+\alpha_{k-2}+2)\\
&=&\alpha_2+\ldots+\alpha_h-\alpha_3-\ldots-\alpha_{k-2}\\
&=&\alpha_2+\alpha_{k-1}+\ldots+\alpha_h\\
&=&\alpha_1.
\end{eqnarray*}

If $(\alpha_{k-1},\ldots,\alpha_h)=(3,2,1,1)$ let $d=3$. If instead $(\alpha_{k-1},\ldots,\alpha_h)=(5,3,2,1)$ let $d=4$. Notice that $c+d=\alpha_{k-1}+\ldots+\alpha_h-1$. Then by assumption
\[\alpha_1-c=\alpha_2+\alpha_{k-1}+\ldots+\alpha_h-c=\alpha_2+d+1.\]
It follows that
\[\chi^\beta_\alpha=(-1)^c\chi^{(|\alpha|-\alpha_1)}_{(\alpha_2,\ldots,\alpha_h)}-\chi^\delta_{(\alpha_2,\ldots,\alpha_h)}=(-1)^c-\chi^\delta_{(\alpha_2,\ldots,\alpha_h)}\]
where $\delta=(\alpha_2+d,\alpha_3+\ldots+\alpha_{k-2}+1,1^c)$.

Assume first that $k=4$. Then $\alpha_3+\ldots+\alpha_{k-2}=0$ and so, as $c+1<\alpha_2$,
\[\chi^\delta_{(\alpha_2,\ldots,\alpha_h)}=\chi^{(d,1^{c+1})}_{(\alpha_{k-1},\ldots,\alpha_h)}=(-1)^{c-1}\]
(the last equality follows from $(\alpha_{k-1},\ldots,\alpha_h)\in\{(3,2,1,1),(5,3,2,1)\}$ and from the definition of $c$ and $d$) and so in this case $\chi^\beta_\alpha=(-1)^c2$.

So assume now that $k>4$. As $(\alpha_2,\ldots,\alpha_h)\in\Sign$, it follows that $\alpha_j>\alpha_{k-1}+\ldots+\alpha_h$ for $j\leq k-2$. Also
\[\delta_2=\alpha_3+\ldots+\alpha_{k-2}+1\geq\alpha_3+1>d+2>2.\]
So
\[h_{1,d+2}^\delta=\alpha_2+d+2-(d+2)=\alpha_2\]
and then as by assumption $|\delta|=\alpha_2+\ldots+\alpha_h<2\alpha_2$, so that $\delta$ cannot have more than 1 hook of length $\alpha_2$,
\[\chi^\delta_{(\alpha_2,\ldots,\alpha_h)}=-\chi^\epsilon_{(\alpha_3,\ldots,\alpha_h)}\]
with $\epsilon=(\alpha_3+\ldots+\alpha_{k-2},d+1,1^c)$. As $h_{2,1}^\epsilon=c+d+1=\alpha_{k-1}+\ldots+\alpha_h<\alpha_j$ for $j\leq k-2$ and then in particular also $\alpha_{k-2}\geq d+1>2$, we have that
\[\chi^\epsilon_{(\alpha_3,\ldots,\alpha_h)}=\chi^{(\alpha_{k-2},d+1,1^c)}_{(\alpha_{k-2},\ldots,\alpha_h)}=-\chi^{(d,1^{c+1})}_{(\alpha_{k-1},\ldots,\alpha_h)}=(-1)^c.\]
In particular also in this case $\chi^\beta_\alpha=(-1)^c2$.
\end{proof}

\begin{theor}
Assume that the following hold:
\begin{itemize}
\item
$\alpha\not\in\Sign$, $(\alpha_2,\ldots,\alpha_h)\in\Sign$ and $\alpha_1>\alpha_2>\alpha_3+\ldots+\alpha_h$,

\item
$k\leq h$,

\item
$\alpha_1-\alpha_2$ is not a part of $\alpha$,

\item
one of the following holds:
\begin{itemize}
\item
$(\alpha_{k-1},\ldots,\alpha_h)=(a,a-1,1)$ with $a\geq 2$, $\alpha_1=\alpha_2+\alpha_{k-1}+\ldots+\alpha_h-1$ and $(\alpha_{k-2},\ldots,\alpha_h)\not\in\{(3,2,1,1),(5,3,2,1)\}$,

\item
$(\alpha_{k-1},\ldots,\alpha_h)=(a,a-1,2,1)$ with $a\geq 4$ and $\alpha_1=\alpha_2+\alpha_{k-1}+\ldots+\alpha_h-3$,

\item
$(\alpha_{k-1},\ldots,\alpha_h)=(a,a-1,3,1)$ with $a\geq 5$ and $\alpha_1=\alpha_2+\alpha_{k-1}+\ldots+\alpha_h-4$.
\end{itemize}
\end{itemize}
Then $\beta:=(|\alpha|-\alpha_1,1^{\alpha_1})$ is a partition with $h_{2,1}^\beta=\alpha_1$ and $\chi^\beta_\alpha=(-1)^{\alpha_1-1}2$.
\end{theor}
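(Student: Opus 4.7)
The plan is to apply the Murnaghan--Nakayama rule iteratively, exploiting the fact that in any hook $(m,1^k)$ the only rim hooks of length $\ell$ are the last $\ell$ cells of the first row (sign $+1$, valid when $\ell\leq m-1$), the last $\ell$ cells of the first column below $(1,1)$ (sign $(-1)^{\ell-1}$, valid when $\ell\leq k$), or the entire shape. That $\beta$ is a partition is clear from $|\alpha|-\alpha_1=\alpha_2+\ldots+\alpha_h\geq 1$, and $h_{2,1}^\beta=\alpha_1$ is immediate. Writing $\epsilon\in\{1,3,4\}$ according to the three sub-cases, the hypothesis $\alpha_1=\alpha_2+\alpha_{k-1}+\ldots+\alpha_h-\epsilon$ gives $|\alpha|-2\alpha_1=\alpha_3+\ldots+\alpha_{k-2}+\epsilon\geq 1$, so both rim hooks of length $\alpha_1$ are valid, and Murnaghan--Nakayama yields
\[\chi^\beta_\alpha=(-1)^{\alpha_1-1}+\chi^{(\alpha_3+\ldots+\alpha_{k-2}+\epsilon,\,1^{\alpha_1})}_{(\alpha_2,\ldots,\alpha_h)}.\]
It suffices to show the second term equals $(-1)^{\alpha_1-1}$.

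For this I would peel off $\alpha_2,\alpha_3,\ldots,\alpha_{k-2}$ in order. The first removal is forced into the column by
\[\alpha_2>\alpha_3+\ldots+\alpha_h\geq\alpha_3+\ldots+\alpha_{k-2}+\epsilon,\]
using that $\alpha_{k-1}+\ldots+\alpha_h\in\{2a,2a+2,2a+3\}$ dominates $\epsilon$; this step contributes sign $(-1)^{\alpha_2-1}$. For each $3\leq j\leq k-2$ (vacuous when $k=4$), the $\Sign$-decomposition of $(\alpha_2,\ldots,\alpha_h)$ with base $(\alpha_{k-1},\ldots,\alpha_h)$ -- secured in sub-case A precisely by the exclusion $(\alpha_{k-2},\ldots,\alpha_h)\notin\{(3,2,1,1),(5,3,2,1)\}$, and automatic in sub-cases B and C since no listed base partition has five parts -- gives $\alpha_j>\alpha_{j+1}+\ldots+\alpha_h\geq\alpha_{k-1}+\ldots+\alpha_h>\alpha_1-\alpha_2$, blocking column removal while allowing row removal with sign $+1$. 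The computation therefore reduces to $(-1)^{\alpha_2-1}\chi^{(\epsilon,1^{\alpha_1-\alpha_2})}_{(\alpha_{k-1},\ldots,\alpha_h)}$.

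It then remains to verify in each sub-case that $\chi^{(\epsilon,1^{\alpha_1-\alpha_2})}_{(\alpha_{k-1},\ldots,\alpha_h)}=(-1)^{\alpha_1-\alpha_2}$, where in every sub-case $\alpha_1-\alpha_2=2a-1$. In sub-case A the shape $(1^{2a})$ carries the sign character of $S_{2a}$, which on $(a,a-1,1)$ evaluates to $(-1)^{2a-3}=-1=(-1)^{2a-1}$. In sub-cases B and C the row length is $\epsilon\in\{3,4\}$ and both $a$ and $a-1$ are at least $\epsilon$, so the $a$-hook and $(a-1)$-hook must each peel from the first column with signs $(-1)^{a-1}$ and $(-1)^{a-2}$, reducing to $\chi^{(\epsilon)}_{(\alpha_{k+1},\ldots,\alpha_h)}=1$ and giving total $(-1)^{2a-3}=-1$. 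Combining everything, $\chi^\beta_\alpha=(-1)^{\alpha_1-1}+(-1)^{\alpha_2-1}\cdot(-1)^{\alpha_1-\alpha_2}=2(-1)^{\alpha_1-1}$. The main technical obstacle is the careful bookkeeping at each Murnaghan--Nakayama step to verify that exactly one rim hook is admissible, which is where the $\Sign$ hypothesis on $(\alpha_2,\ldots,\alpha_h)$ and the explicit form of the tails must be combined.
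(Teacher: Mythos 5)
Your overall strategy is the same as the paper's (peel one $\alpha_1$-hook in two ways, force the $\alpha_2$-removal into the leg, then strip $\alpha_3,\ldots,\alpha_{k-2}$ from the arm and finish with an explicit computation on $(\epsilon,1^{2a-1})$), and the first step, the $\alpha_2$-step and the final evaluations $\chi^{(\epsilon,1^{2a-1})}_{(\alpha_{k-1},\ldots,\alpha_h)}=-1$ are all correct. The gap is in the middle: your claim that for every $3\leq j\leq k-2$ one has $\alpha_j>\alpha_{j+1}+\ldots+\alpha_h$, ``secured in sub-case A by the exclusion of $(3,2,1,1)$ and $(5,3,2,1)$'', is false. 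In sub-case A the tail of the $\Sign$-decomposition of $(\alpha_2,\ldots,\alpha_h)$ need not start at position $k-1$: it can be $(4,3,2,1)$ (with $a=3$) or $(5,4,3,1)$ (with $a=4$), i.e.\ begin at position $k-2$, and these two possibilities are not removed by excluding $(3,2,1,1)$ and $(5,3,2,1)$. For example $\alpha=(16,11,4,3,2,1)$ satisfies every hypothesis of the theorem ($k=5$, $a=3$), yet $\alpha_{k-2}=4<6=\alpha_{k-1}+\alpha_k+\alpha_h$. In such cases $\alpha_{k-2}=a+1\leq 2a-1$, so at the step where you remove $\alpha_{k-2}$ from $(\alpha_{k-2}+\epsilon,1^{2a-1})=(\alpha_{k-2}+1,1^{2a-1})$ the leg removal is also admissible, and the Murnaghan--Nakayama expansion has a second term $\pm\chi^{(\alpha_{k-2}+1,1^{2a-1-\alpha_{k-2}})}_{(\alpha_{k-1},\ldots,\alpha_h)}$ that your argument silently discards.

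The statement survives because these extra terms vanish: one checks $\chi^{(5,1)}_{(3,2,1)}=0$ and $\chi^{(6,1,1)}_{(4,3,1)}=0$, so $\chi^{(\alpha_{k-2}+1,1^{2a-1})}_{(\alpha_{k-2},\ldots,\alpha_h)}=-1$ still holds. This is exactly the separate case the paper treats ($k>4$ and $\alpha_{k-2}<2a$, where necessarily $(\alpha_{k-2},\ldots,\alpha_h)\in\{(4,3,2,1),(5,4,3,1)\}$, and, for $k\geq 6$, $\alpha_{k-3}>2a-1$ so the earlier strips are still forced into the arm). To make your proof complete you must add this case distinction and the two vanishing computations; note also that in sub-cases B and C your ``no listed tail has five parts'' argument is fine, so the problem is confined to sub-case A.
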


\begin{proof}
From the definition we clearly have that $\beta$ is a partition with $h_{2,1}^\beta=\alpha_1$.

Notice that from the assumptions $\alpha_1=\alpha_2+2a-1$. Also
\[|\alpha|-\alpha_1=\alpha_2+\ldots+\alpha_h>\alpha_2+2a-1=\alpha_1\]
and so, as $\alpha_2>\alpha_3+\ldots+\alpha_h$, so that any partition of $\alpha_2+\ldots+\alpha_h$ has at most one hook of length $\alpha_2$,
\begin{eqnarray*}
\chi^\beta_\alpha&=&(-1)^{\alpha_1-1}\chi^{(|\alpha|-\alpha_1)}_{(\alpha_2,\ldots,\alpha_h)}+\chi^{(|\alpha|-2\alpha_1,1^{\alpha_1})}_{(\alpha_2,\ldots,\alpha_h)}\\
&=&(-1)^{\alpha_1-1}+(-1)^{\alpha_2-1}\chi^{(|\alpha|-2\alpha_1,1^{\alpha_1-\alpha_2})}_{(\alpha_3,\ldots,\alpha_h)}\\
&=&(-1)^{\alpha_1-1}+(-1)^{\alpha_1}\chi^{(|\alpha|-2\alpha_1,1^{2a-1})}_{(\alpha_3,\ldots,\alpha_h)}.
\end{eqnarray*}

Assume first that either $k=4$ or $k>4$ and $\alpha_{k-2}\geq 2a$. Then, as $\alpha_{k-1}+\ldots+\alpha_h\geq 2a$ it follows that
\[\chi^{(|\alpha|-2\alpha_1,1^{2a-1})}_{(\alpha_3,\ldots,\alpha_h)}=\chi^{(\alpha_{k-1}+\ldots+\alpha_h-2a+1,1^{2a-1})}_{(\alpha_{k-1},\ldots,\alpha_h)}=(-1)^{(a-1)+(a-2)}=-1.\]
The second last equality follows from
\[(\alpha_{k-1}+\ldots+\alpha_h-2a+1,1^{2a-1})\!=\!\!\left\{\begin{array}{ll}
\!\!(1^{2a})&(\alpha_{k-1},\ldots,\alpha_h)=(a,a-1,1),\\
\!\!(3,1^{2a-1})&(\alpha_{k-1},\ldots,\alpha_h)=(a,a-1,2,1),\\
\!\!(4,1^{2a-1})&(\alpha_{k-1},\ldots,\alpha_h)=(a,a-1,3,1),
\end{array}\right.\]
so that, by assumption on $a$, $a-1>h_{1,2}^{(\alpha_{k-1}+\ldots+\alpha_h-2a+1,1^{2a-1})}$ in the last two cases.

Assume now that $k>4$ and $\alpha_{k-2}<2a\leq \alpha_{k-1}+\ldots+\alpha_h$. Notice that in this case $(\alpha_{k-1},\ldots,\alpha_h)=(a,a-1,1)$, as $(\alpha_2,\ldots,\alpha_h)\in\Sign$ and then also $(\alpha_{k-2},\ldots,\alpha_h)\in\Sign$. From this assumption and the assumption that $(\alpha_{k-2},\ldots,\alpha_h)\not\in\{(3,2,1,1),(5,3,2,1)\}$ it follows that $(\alpha_{k-2},\ldots,\alpha_h)\in\{(4,3,2,1),(5,4,3,1)\}$. Also, always by assumption of $(\alpha_2,\ldots,\alpha_h)\in\Sign$, if $k\geq 6$ then $\alpha_{k-3}>2a-1$. In either of the two cases
\[\chi^{(|\alpha|-2\alpha_1,1^{2a-1})}_{(\alpha_3,\ldots,\alpha_h)}=\chi^{(\alpha_{k-2}+1,1^{2a-1})}_{(\alpha_{k-2},\ldots,\alpha_h)}=-1.\]

In either case $\chi^\beta_\alpha=(-1)^{\alpha_1-1}2$ and so the theorem is proved.
\end{proof}

\begin{theor}
Assume that the following hold:
\begin{itemize}
\item
$\alpha\not\in\Sign$, $(\alpha_2,\ldots,\alpha_h)\in\Sign$ and $\alpha_1>\alpha_2>\alpha_3+\ldots+\alpha_h$,

\item
$k\leq h$,

\item
$\alpha_1-\alpha_2$ is not a part of $\alpha$,

\item
$\alpha_1=\alpha_2+\alpha_{k-1}+\ldots+\alpha_h-1$,

\item
$(\alpha_{k-2},\ldots,\alpha_h)\in\{(3,2,1,1),(5,3,2,1)\}$.
\end{itemize}
Then $\beta:=(|\alpha|-\alpha_1,\alpha_1)$ is a partition with $h_{2,1}^\beta=\alpha_1$ and $\chi^\beta_\alpha=2$.
\end{theor}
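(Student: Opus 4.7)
The plan is to apply the Murnaghan--Nakayama rule iteratively to $\beta=(L,M)$, where $L:=|\alpha|-\alpha_1$ and $M:=\alpha_1$. First I would note that $\beta$ is a partition with $h_{2,1}^\beta=\alpha_1$: the inequality $L=\alpha_2+\ldots+\alpha_h\geq\alpha_1=M$ follows from $\alpha\not\in\Sign$ via Lemma~\ref{l2}. Substituting the hypothesis $\alpha_1=\alpha_2+\alpha_{k-1}+\ldots+\alpha_h-1$ gives $L-M=\alpha_3+\ldots+\alpha_{k-2}+1$, and combined with $\alpha_2>\alpha_3+\ldots+\alpha_h$ this yields $L\leq 2M-2$. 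Hence $\beta$ has exactly two rim $\alpha_1$-hooks: the full second row (leg length $0$, whose removal leaves $(L)$) and the rim hook corresponding to the hook at $(1,L-M+2)$ (leg length $1$, whose removal leaves $(M-1,L-M+1)$). Consequently
\[
\chi^\beta_\alpha=1-\chi^{(M-1,L-M+1)}_{(\alpha_2,\ldots,\alpha_h)},
\]
so it suffices to show that the remaining character equals $-1$.

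To evaluate it I would peel $\alpha_2$ first: the row-$2$ length $L-M+1$ is less than $\alpha_2$, and $h_{1,1}^{(M-1,L-M+1)}=M>\alpha_2$, so the unique $\alpha_2$-hook sits at $(1,\alpha_{k-1}+\ldots+\alpha_h)$ and has leg length $1$; removal leaves $(\alpha_3+\ldots+\alpha_{k-2}+1,v)$ with $v:=\alpha_{k-1}+\ldots+\alpha_h-1\in\{3,5\}$ (note that $v=\alpha_{k-2}$ in both cases). For $j=3,\ldots,k-3$ I then peel $\alpha_j$ in succession. Because $j\leq k-3$ places $\alpha_j$ in the strict-decreasing segment of $(\alpha_2,\ldots,\alpha_h)\in\Sign$, the inequality $\alpha_j>\alpha_{j+1}+\ldots+\alpha_h$ holds, which forces a unique $\alpha_j$-hook in row $1$ beyond column $v$ and of leg length $0$; its removal transforms $(\alpha_j+\ldots+\alpha_{k-3}+v+1,v)$ into $(\alpha_{j+1}+\ldots+\alpha_{k-3}+v+1,v)$.

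After the final middle peel (or immediately, if the middle range is empty) the partition is $(v+1,v)\in\{(4,3),(6,5)\}$ against the cycle type $(\alpha_{k-2},\ldots,\alpha_h)\in\{(3,2,1,1),(5,3,2,1)\}$. The values $\chi^{(4,3)}_{(3,2,1,1)}=1$ and $\chi^{(6,5)}_{(5,3,2,1)}=1$ recorded in the proof of the second theorem of this section close the chain; multiplying the collected signs yields $(-1)\cdot(+1)^{k-5}\cdot 1=-1$, and therefore $\chi^\beta_\alpha=1-(-1)=2$. The main point of care throughout is the uniqueness of the chosen rim hook at each peel, which is a routine comparison of hook lengths with the strict tail inequalities supplied by the definition of $\Sign$ together with the explicit form of $\alpha_1$.
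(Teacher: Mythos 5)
Your proposal is correct and follows essentially the same route as the paper: the same two $\alpha_1$-hooks of $\beta=(|\alpha|-\alpha_1,\alpha_1)$, the same intermediate partition $(\alpha_1-1,\alpha_3+\ldots+\alpha_{k-2}+2)$, the same single $\alpha_2$-peel with leg length $1$, and the same leg-zero peels of $\alpha_3,\ldots,\alpha_{k-3}$ ending at $\chi^{(4,3)}_{(3,2,1,1)}=\chi^{(6,5)}_{(5,3,2,1)}=1$. The sign bookkeeping and the uniqueness checks you sketch match the paper's argument, so nothing further is needed.
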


\begin{proof}
Since, by assumption, $\alpha_1<\alpha_2+\ldots+\alpha_h=|\alpha|-\alpha_1$ we have that $\beta$ is a partition. Also clearly $h_{2,1}^\beta=\alpha_1$.

Notice that in this case $k-2>2$, as $\alpha_{k-2}<\alpha_{k-1}+\ldots+\alpha_h$ and by assumption $\alpha_2>\alpha_3+\ldots+\alpha_h$. As
\[1<\alpha_3+\ldots+\alpha_{k-2}+3<\alpha_3+\ldots+\alpha_h<\alpha_2<\alpha_1\]
it follows that
\begin{eqnarray*}
h_{1,\alpha_3+\ldots+\alpha_{k-2}+3}^\beta&\!=\!&|\alpha|-\alpha_1+2-(\alpha_3+\ldots+\alpha_{k-2}+3)\\
&\!=\!&|\alpha|-(\alpha_2+\alpha_{k-1}+\ldots+\alpha_h-1)-(\alpha_3+\ldots+\alpha_{k-2})-1\\
&\!=\!&|\alpha|-\alpha_2-\ldots-\alpha_h\\
&\!=\!&\alpha_1.
\end{eqnarray*}
So
\[\chi^\beta_\alpha=\chi^{(|\alpha|-\alpha_1)}_{(\alpha_2,\ldots,\alpha_h)}-\chi^\delta_{(\alpha_2,\ldots,\alpha_h)}=1-\chi^\delta_{(\alpha_2,\ldots,\alpha_h)},\]
with
\[\delta:=(\alpha_1-1,\alpha_3+\ldots+\alpha_{k-2}+2)=(\alpha_2+\alpha_{k-1}+\ldots+\alpha_h-2,\alpha_3+\ldots+\alpha_{k-2}+2).\]
Also by assumption
\[1<\alpha_{k-1}+\ldots+\alpha_h<\alpha_{k-2}+2\leq \alpha_3+\ldots+\alpha_{k-2}+2\]
and then
\[h_{1,\alpha_{k-1}+\ldots+\alpha_h}^\delta=\alpha_2+\alpha_{k-1}+\ldots+\alpha_h-2+2-\alpha_{k-1}+\ldots+\alpha_h=\alpha_2.\]
From the previous $\alpha_3+\ldots+\alpha_{k-2}+2<\alpha_2$ and so
\[\chi^\delta_{(\alpha_2,\ldots,\alpha_h)}=-\chi^\epsilon_{(\alpha_3,\ldots,\alpha_h)}\]
with
\[\epsilon:=(\alpha_3+\ldots+\alpha_{k-2}+1,\alpha_{k-1}+\ldots+\alpha_h-1).\]
As $(\alpha_2,\ldots,\alpha_h)\in\Sign$ by assumption, so that $\alpha_j>\alpha_{k-1}+\ldots+\alpha_h>\epsilon_2$ for $j\leq k-3$ and as $\alpha_{k-2}+1>\alpha_{k-1}+\ldots+\alpha_h-1$ by assumption, it follows that
\[\chi^\epsilon_{(\alpha_3,\ldots,\alpha_h)}=\chi^{(\alpha_{k-2}+1,\alpha_{k-1}+\ldots+\alpha_h-1)}_{(\alpha_{k-2},\ldots,\alpha_h)}=1\]
(the last equation follows from the assumption that $(\alpha_{k-2},\ldots,\alpha_h)$ is either $(3,2,1,1)$ or $(5,3,2,1)$).

In particular $\chi^\beta_\alpha=2$ and so the theorem holds.
\end{proof}

\begin{theor}
Assume that the following hold:
\begin{itemize}
\item
$\alpha\not\in\Sign$, $(\alpha_2,\ldots,\alpha_h)\in\Sign$ and $\alpha_1>\alpha_2>\alpha_3+\ldots+\alpha_h$,

\item
$k\leq h$,

\item
there exists $i$ with $\alpha_i=\alpha_1-\alpha_2$,

\item
$\alpha_i\geq \alpha_{i+1}+\ldots+\alpha_h$.
\end{itemize}
Then $\beta=(|\alpha|-\alpha_1,\alpha_2+1,1^{\alpha_1-\alpha_2-1})$ is a partition with $h_{2,1}^\beta=\alpha_1$ and $\chi^\beta_\alpha=(-1)^{\alpha_1-\alpha_2-1}2$.
\end{theor}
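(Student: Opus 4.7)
The plan is to evaluate $\chi^\beta_\alpha$ by Murnaghan-Nakayama in three stages, peeling off $\alpha_1$, then $\alpha_2$, and finally the remaining parts of $\alpha$ from a hook-shaped partition. First I verify that $\beta=(|\alpha|-\alpha_1,\alpha_2+1,1^{\alpha_1-\alpha_2-1})$ is a partition: the inequality $|\alpha|-\alpha_1\geq\alpha_2+1$ follows from $\alpha_1\leq\alpha_2+\ldots+\alpha_h$ (since $\alpha\not\in\Sign$ and $\alpha_1>\alpha_2$) combined with $\alpha_3+\ldots+\alpha_h\geq 1$. The hook length $h^\beta_{2,1}=\alpha_2+(\alpha_1-\alpha_2-1)+1=\alpha_1$ is direct.

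For the first peel, note that $\alpha_1>\alpha_2>\alpha_3+\ldots+\alpha_h$ implies $2\alpha_1+\alpha_2>|\alpha|$, so $j^*:=|\alpha|-2\alpha_1+2$ satisfies $2\leq j^*\leq\alpha_2+1$. A direct inspection of hook lengths in $\beta$ shows that the $\alpha_1$-rim-hooks are exactly two: the one at $(2,1)$ (leg $\alpha_1-\alpha_2-1$) and the one at $(1,j^*)$ (leg $1$). Removing the first leaves $(|\alpha|-\alpha_1)$, contributing $(-1)^{\alpha_1-\alpha_2-1}$. Removing the second, which as a rim hook drops from row $1$ to row $2$ at column $\beta_2=\alpha_2+1$, leaves $\gamma:=(\alpha_2,|\alpha|-2\alpha_1+1,1^{\alpha_1-\alpha_2-1})$, contributing $-\chi^\gamma_{(\alpha_2,\ldots,\alpha_h)}$. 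Thus
\[
\chi^\beta_\alpha=(-1)^{\alpha_1-\alpha_2-1}-\chi^\gamma_{(\alpha_2,\ldots,\alpha_h)},
\]
and the task reduces to showing $\chi^\gamma_{(\alpha_2,\ldots,\alpha_h)}=(-1)^{\alpha_1-\alpha_2}$.

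For the second peel I check that $\gamma$ has a unique $\alpha_2$-rim-hook, at $(1,2)$ with leg $1$. The alternative candidate $h^\gamma_{2,1}=|\alpha|-\alpha_1-\alpha_2$ is strictly less than $\alpha_2$ by the hypothesis $\alpha_2>\alpha_3+\ldots+\alpha_h$, and the remaining candidates are ruled out similarly using also $\alpha_1-\alpha_2\leq\alpha_2$ (which holds because $\alpha_i=\alpha_1-\alpha_2$ is a part with $i\geq 2$). Removing the hook at $(1,2)$ yields the hook shape $\delta:=(|\alpha|-2\alpha_1,1^{\alpha_1-\alpha_2})$, so $\chi^\gamma_{(\alpha_2,\ldots,\alpha_h)}=-\chi^\delta_{(\alpha_3,\ldots,\alpha_h)}$, and the goal becomes $\chi^\delta_{(\alpha_3,\ldots,\alpha_h)}=(-1)^{\alpha_1-\alpha_2-1}$.

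Set $q:=\alpha_1-\alpha_2=\alpha_i$. The assumptions $k\leq h$, $\alpha_i=\alpha_1-\alpha_2$, $\alpha\not\in\Sign$ and $(\alpha_2,\ldots,\alpha_h)\in\Sign$ force $3\leq i\leq h-1$. Writing $\delta=(p,1^q)$ with $p+q=\alpha_3+\ldots+\alpha_h$ and $p=\sum_{j\neq i,\,j\geq 3}\alpha_j$, I evaluate $\chi^\delta_{(\alpha_3,\ldots,\alpha_h)}$ by iterated Murnaghan-Nakayama. Every rim hook in a hook-shaped partition is the whole partition, a row hook (leg $0$), or a column hook (leg $m-1$). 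Using $(\alpha_2,\ldots,\alpha_h)\in\Sign$, the inequalities $\alpha_j>\alpha_{j+1}+\ldots+\alpha_h$ for $3\leq j\leq i-1$ force $\alpha_j$ to be peeled uniquely as a row hook (sign $+$). The hypothesis $\alpha_i\geq\alpha_{i+1}+\ldots+\alpha_h$ together with $\alpha_i=q$ then forces $\alpha_i$ to be peeled uniquely as a column hook with leg $q-1$, leaving the single row $(\alpha_{i+1}+\ldots+\alpha_h)$, from which the tail $\alpha_{i+1},\ldots,\alpha_h$ is peeled uniquely with leg $0$. The cumulative sign is $(-1)^{q-1}=(-1)^{\alpha_1-\alpha_2-1}$, as needed. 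The main technical obstacle is the hook-uniqueness verification at each step, in particular excluding the corner hook and additional row hooks; these exclusions rest on the precise Sign-partition inequalities for $(\alpha_2,\ldots,\alpha_h)$ and the bound $\alpha_i\geq\alpha_{i+1}+\ldots+\alpha_h$.
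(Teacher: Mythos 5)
Your decomposition is exactly the paper's: the same $\beta$, the two $\alpha_1$-hooks at $(2,1)$ and $(1,j^*)$, the unique $\alpha_2$-hook of the intermediate partition at $(1,2)$, and a final evaluation on a hook shape $(p,1^{\alpha_i})$. Most of this is sound, but the justification of the last stage has a genuine error: you claim that $(\alpha_2,\ldots,\alpha_h)\in\Sign$ yields $\alpha_j>\alpha_{j+1}+\ldots+\alpha_h$ for all $3\leq j\leq i-1$, and you use these inequalities to force the row-hook peeling. These inequalities do not follow from the hypotheses when $i$ lies inside the special tail of the Sign partition. Concretely, $\alpha=(27,24,12,5,3,2,1)$ satisfies every hypothesis of the theorem ($\alpha_1-\alpha_2=3=\alpha_5$, so $i=5$; $k=7\leq h$; $\alpha_5\geq\alpha_6+\alpha_7$; $(24,12,5,3,2,1)\in\Sign$ with tail $(5,3,2,1)$), yet $\alpha_4=5<6=\alpha_5+\alpha_6+\alpha_7$. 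The intermediate claim you need --- that each $\alpha_j$, $3\leq j\leq i-1$, is removed from the current hook $(p_j,1^{\alpha_i})$ uniquely as a row hook with sign $+1$ --- is still true, but for the reason the paper gives, not yours: column hooks of $(p_j,1^{\alpha_i})$ have length at most $\alpha_i$, and $\alpha_j>\alpha_i$ for $j<i$; the full hook has length $\alpha_j+\ldots+\alpha_h>\alpha_j$ since $j<h$; and the row hook of length $\alpha_j$ exists because $i<h$ gives $p_j-\alpha_j\geq\alpha_h\geq 1$. Replace your appeal to the (false) Sign inequalities by this argument; your treatment of the step removing $\alpha_i$ itself, using $\alpha_i\geq\alpha_{i+1}+\ldots+\alpha_h$, is correct.

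A smaller omission: you only establish $2\leq j^*\leq\alpha_2+1$, but your second peel needs the second row of $\gamma=(\alpha_2,|\alpha|-2\alpha_1+1,1^{\alpha_1-\alpha_2-1})$ to have length at least $2$, i.e.\ $j^*\geq 3$; otherwise the $(1,2)$-hook of $\gamma$ has leg $0$ and length $\alpha_2-1$, not $\alpha_2$. This does hold, but it requires the hypotheses $k\leq h$ and the existence of $i$: if $j^*=2$ then $\alpha_1=\alpha_2+\ldots+\alpha_h$, so $\alpha_i=\alpha_1-\alpha_2=\alpha_3+\ldots+\alpha_h$ forces $i=3$ and $h=3$, whence $k=4>h$, a contradiction. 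The paper checks $3\leq j\leq\alpha_2$ explicitly; add this verification.
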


\begin{proof}
Since by assumption $\alpha_1>\alpha_2+\alpha_h\geq \alpha_2+1$ and (also using Lemma \ref{l2})
\[|\alpha|-\alpha_1\geq\alpha_1>\alpha_2+\alpha_h\geq \alpha_2+1\]
it follows that $\beta$ is partition. Also clearly $h_{2,1}^\beta=\alpha_1$.

From the definition of $k$ and from
\[2\alpha_2>\alpha_2+\ldots+\alpha_h\geq \alpha_1\]
we have that $3\leq i<k\leq h$. Then
\begin{eqnarray*}
h_{1,2}^\beta&=&|\alpha|-\alpha_1=\alpha_2+\ldots+\alpha_h\geq \alpha_2+\alpha_i+\alpha_h>\alpha_1,\\
h_{1,\alpha_2+1}^\beta&=&|\alpha|-\alpha_1+2-\alpha_2-1=\alpha_3+\ldots+\alpha_h+1\leq \alpha_2<\alpha_1.
\end{eqnarray*}
In particular there exists $3\leq j\leq \alpha_2$ such that $h_{1,j}^\beta=\alpha_1$. From the Murnaghan-Nakayama formula it follows that
\begin{eqnarray*}
\chi^\beta_\alpha&=&(-1)^{\alpha_1-\alpha_2-1}\chi^{(|\alpha|-\alpha_1)}_{(\alpha_2,\ldots,\alpha_h)}-\chi^{(\alpha_2,j-1,1^{\alpha_1-\alpha_2-1})}_{(\alpha_2,\ldots,\alpha_3)}\\
&=&(-1)^{\alpha_1-\alpha_2-1}+\chi^{(j-2,1^{\alpha_1-\alpha_2})}_{(\alpha_3,\ldots,\alpha_h)}\\
&=&(-1)^{\alpha_1-\alpha_2-1}+\chi^{(\alpha_{i+1}+\ldots+\alpha_h,1^{\alpha_i})}_{(\alpha_i,\ldots,\alpha_h)}\\
&=&(-1)^{\alpha_1-\alpha_2-1}+(-1)^{\alpha_i-1}\chi^{(\alpha_{i+1}+\ldots+\alpha_h)}_{(\alpha_{i+1},\ldots,\alpha_h)}\\
&=&(-1)^{\alpha_1-\alpha_2-1}2.
\end{eqnarray*}
The second line follows from $h_{1,2}^{(\alpha_2,j-1,1^{\alpha_1-\alpha_2-1})}=\alpha_2$, as $j\geq 3$, and from $|(\alpha_2,j-1,1^{\alpha_1-\alpha_2-1})|=|\alpha|-\alpha_1<2\alpha_2$, so that $(\alpha_2,j-1,1^{\alpha_1-\alpha_2-1})$ has at most one hook of length $\alpha_2$. The third line from $\alpha_j>\alpha_i$ for $j<i$ and from $i<h$, so that
\begin{eqnarray*}
h_{1,2}^{(j-2,1^{\alpha_1-\alpha_2})}&=&|(\alpha_3,\ldots,\alpha_h)|-(\alpha_1-\alpha_2)-1\\
&=&\alpha_3+\ldots+\alpha_h-\alpha_i-1\\
&\geq&\alpha_1+\ldots+\alpha_{i+1}.
\end{eqnarray*}
The fourth line follows from $\alpha_i\geq \alpha_{i+1}+\ldots+\alpha_h$.
\end{proof}

\begin{theor}
Assume that the following hold:
\begin{itemize}
\item
$\alpha\not\in\Sign$, $(\alpha_2,\ldots,\alpha_h)\in\Sign$ and $\alpha_1>\alpha_2>\alpha_3+\ldots+\alpha_h$,

\item
$k\leq h$,

\item
there exists $i$ with $\alpha_i=\alpha_1-\alpha_2$,

\item
$\alpha_i<\alpha_{i+1}+\ldots+\alpha_h$.
\end{itemize}
Then $\beta=(|\alpha|-\alpha_1,\alpha_2+2,1^{\alpha_1-\alpha_2-2})$ is a partition with $h_{2,1}^\beta=\alpha_1$ and $\chi^\beta_\alpha=(-1)^{\alpha_1-\alpha_2}2$.
\end{theor}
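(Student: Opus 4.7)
The plan is to mirror the strategy of the previous theorem, adjusted for the new choice $\beta=(|\alpha|-\alpha_1,\alpha_2+2,1^{\alpha_1-\alpha_2-2})$. First I would verify that $\beta$ is a partition and $h_{2,1}^\beta=\alpha_1$: the hypothesis $\alpha_i<\alpha_{i+1}+\ldots+\alpha_h$ combined with $(\alpha_2,\ldots,\alpha_h)\in\Sign$ forces $(\alpha_i,\ldots,\alpha_h)$ to be one of $(3,2,1,1)$, $(5,3,2,1)$, $(a,a-1,2,1)$, or $(a,a-1,3,1)$, so $\alpha_i\geq 3$ and hence $\alpha_1\geq\alpha_2+3$; combined with $|\alpha|-\alpha_1\geq\alpha_1$ (from Lemma~\ref{l2}) this validates $\beta$, and the hook equality is immediate.

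Next, exactly as in the previous theorem, the sandwich $h_{1,2}^\beta=\alpha_2+\ldots+\alpha_h>\alpha_1$ and $h_{1,\alpha_2+1}^\beta=\alpha_3+\ldots+\alpha_h+1\leq\alpha_2<\alpha_1$ pins down a unique $j$ with $3\leq j\leq\alpha_2$ and $h_{1,j}^\beta=\alpha_1$, namely $j=|\alpha|-2\alpha_1+2$ (a routine inventory excludes any other $\alpha_1$-hooks). Applying Murnaghan--Nakayama at both $\alpha_1$-rim-hooks gives
\[
\chi^\beta_\alpha=(-1)^{\alpha_1-\alpha_2-2}\chi^{(|\alpha|-\alpha_1)}_{(\alpha_2,\ldots,\alpha_h)}-\chi^\gamma_{(\alpha_2,\ldots,\alpha_h)}=(-1)^{\alpha_1-\alpha_2}-\chi^\gamma_{(\alpha_2,\ldots,\alpha_h)},
\]
with $\gamma=(\alpha_2+1,j-1,1^{\alpha_1-\alpha_2-2})$. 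Because $|\gamma|<2\alpha_2$, $\gamma$ has at most one $\alpha_2$-hook; a direct check (using $j\geq 4$, automatic for the four admissible tails) places it at $(1,3)$ with leg $1$, reducing the theorem to the identity $\chi^{(j-2,2,1^{\alpha_i-2})}_{(\alpha_3,\ldots,\alpha_h)}=(-1)^{\alpha_i}$. I would then peel $\alpha_3,\ldots,\alpha_{i-1}$ from the first row one at a time: each $\alpha_p$ with $p<i$ exceeds $\alpha_i+\alpha_{i+1}+\ldots+\alpha_h$ by the Sign structure, so the only $\alpha_p$-rim-hook in any intermediate $(c,2,1^{\alpha_i-2})$ sits in row $1$ with leg $0$, and the inequality $c\geq\alpha_p+2$ (needed for this rim hook to exist) holds throughout because $\alpha_{p+1}+\ldots+\alpha_h-\alpha_i\geq 2$ in every case. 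The iteration telescopes, with all signs $+1$, to $\chi^{(\alpha_{i+1}+\ldots+\alpha_h,2,1^{\alpha_i-2})}_{(\alpha_i,\ldots,\alpha_h)}$.

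The main obstacle is the closing computation, which I would settle by a four-case split on the tail $(\alpha_i,\ldots,\alpha_h)$. For $(\alpha_i,\ldots,\alpha_h)\in\{(3,2,1,1),(5,3,2,1)\}$ one has $\alpha_{i+1}+\ldots+\alpha_h=\alpha_i+1$, so the residual shape has a unique $\alpha_i$-rim-hook, at $(2,1)$ with leg $\alpha_i-2$; its removal leaves a single row of the required size and contributes $(-1)^{\alpha_i-2}=(-1)^{\alpha_i}$. For $(\alpha_i,\ldots,\alpha_h)\in\{(a,a-1,2,1),(a,a-1,3,1)\}$ an additional $\alpha_i$-hook appears in row $1$, but the extra term becomes $\chi^{(2,2,1^{a-2})}_{(a-1,2,1)}$ or $\chi^{(3,2,1^{a-2})}_{(a-1,3,1)}$, and a quick hook-length enumeration shows that neither $(2,2,1^{a-2})$ (for $a\geq 4$) nor $(3,2,1^{a-2})$ (for $a\geq 5$) admits an $(a-1)$-hook, so these secondary contributions vanish. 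In every subcase the value is $(-1)^{\alpha_i}=(-1)^{\alpha_1-\alpha_2}$, giving $\chi^\beta_\alpha=(-1)^{\alpha_1-\alpha_2}+(-1)^{\alpha_i}=(-1)^{\alpha_1-\alpha_2}\cdot 2$, as claimed.
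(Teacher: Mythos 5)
Your proposal is correct and follows essentially the same route as the paper: the same $\beta$, the same two $\alpha_1$-rim-hooks, the reduction via $(\alpha_2+1,j-1,1^{\alpha_1-\alpha_2-2})$ and its unique $\alpha_2$-hook at $(1,3)$, the peeling of $\alpha_3,\ldots,\alpha_{i-1}$ from the first row, and the same final case split on $(\alpha_i,\ldots,\alpha_h)$ with the secondary terms $\chi^{(c,2,1^{a-2})}_{(a-1,c,1)}$ vanishing. You merely make explicit a few details the paper leaves as "easy to check" (the admissible tails, $j\geq 4$, and the hook-length verifications), so there is nothing to add.
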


\begin{proof}
Since by assumption $\alpha_1>\alpha_2+\alpha_h\geq \alpha_2+1$ and
\[|\alpha|-\alpha_1\geq\alpha_1>\alpha_2+\alpha_h\geq \alpha_2+1\]
it follows that $\beta$ is partition with $h_{2,1}^\beta=\alpha_1$.

From $\alpha_i<\alpha_{i+1}+\ldots+\alpha_h$ and $(\alpha_2,\ldots,\alpha_h)\in\Sign$ it follows that
\begin{eqnarray*}
(\alpha_i,\ldots,\alpha_h)&\in&\{(3,2,1,1),(5,3,2,1)\}\cup\{(a,a-1,2,1):a\geq 4\}\\
&&\hspace{12pt}\cup\{(a,a-1,3,1):a\geq 5\}.
\end{eqnarray*}

Similar to the previous theorem we have that $3\leq i<k\leq h$, from which follows that
\begin{eqnarray*}
h_{1,2}^\beta&=&|\alpha|-\alpha_1=\alpha_2+\ldots+\alpha_h\geq \alpha_2+\alpha_i+\ldots+\alpha_h\geq\alpha_1+2,\\
h_{1,\alpha_2+2}^\beta&=&|\alpha|-\alpha_1+2-\alpha_2-2=\alpha_3+\ldots+\alpha_h< \alpha_2<\alpha_1.
\end{eqnarray*}
In particular there exists $4\leq j\leq \alpha_2$ such that $h_{1,j}^\beta=\alpha_1$. So
\begin{eqnarray*}
\chi^\beta_\alpha&=&(-1)^{\alpha_1-\alpha_2-2}\chi^{(|\alpha|-\alpha_1)}_{(\alpha_2,\ldots,\alpha_h)}-\chi^{(\alpha_2+1,j-1,1^{\alpha_1-\alpha_2-2})}_{(\alpha_2,\ldots,\alpha_3)}\\
&=&(-1)^{\alpha_1-\alpha_2}+\chi^{(j-2,2,1^{\alpha_1-\alpha_2-2})}_{(\alpha_3,\ldots,\alpha_h)}\\
&=&(-1)^{\alpha_1-\alpha_2}+\chi^{(\alpha_{i+1}+\ldots+\alpha_h,2,1^{\alpha_i-2})}_{(\alpha_i,\ldots,\alpha_h)}.
\end{eqnarray*}
The second line follows from $\alpha_2>\alpha_3+\ldots+\alpha_h$ and, as $j\geq 4$,
\[h_{1,3}^{(\alpha_2+1,j-1,1^{\alpha_1-\alpha_2-2})}=\alpha_2+1+2-3=\alpha_2.\]
The third line follows from $\alpha_j>\alpha_i$ for $j<i$ and from
\begin{eqnarray*}
h_{1,3}^{(j-2,2,1^{\alpha_1-\alpha_2-2})}&=&|(\alpha_3,\ldots,\alpha_h)|-(\alpha_1-\alpha_2)-2\\
&=&\alpha_3+\ldots+\alpha_h-\alpha_i-2\\
&\geq&\alpha_1+\ldots+\alpha_{i+1}.
\end{eqnarray*}

If $(\alpha_i,\ldots,\alpha_h)\in\{(3,2,1,1),(5,3,2,1)\}$ it is easy to check that 
\[\chi^{(\alpha_{i+1}+\ldots+\alpha_h,2,1^{\alpha_i-2})}_{(\alpha_i,\ldots,\alpha_h)}=-1=(-1)^{\alpha_i}=(-1)^{\alpha_1-\alpha_2}.\]
In particular the theorem holds in this case.

If $(\alpha_i,\ldots,\alpha_h)=(a,a-1,c,1)$ with $c\in\{2,3\}$ then, as $a-1>c$,
\begin{eqnarray*}
\chi^{(\alpha_{i+1}+\ldots+\alpha_h,2,1^{\alpha_i-2})}_{(\alpha_i,\ldots,\alpha_h)}&=&\chi^{(a+c,2,1^{a-2})}_{(a,a-1,c,1)}\\
&=&(-1)^{a-2}\chi^{(a+c)}_{(a-1,c,1)}+\chi^{(c,2,1^{a-2})}\\
&=&(-1)^a\\
&=&(-1)^{\alpha_1-\alpha_2},
\end{eqnarray*}
so that the theorem holds also in this case.
\end{proof}

In the next theorems we will consider the case $k=h+1$, that is $\alpha_1-\alpha_2\leq \alpha_h$.

\begin{theor}
Assume that the following hold:
\begin{itemize}
\item
$\alpha\not\in\Sign$, $(\alpha_2,\ldots,\alpha_h)\in\Sign$ and $\alpha_1>\alpha_2>\alpha_3+\ldots+\alpha_h$,

\item
$\alpha_1-\alpha_2<\alpha_h$.
\end{itemize}
Then $\beta:=(|\alpha|-\alpha_1,1^{\alpha_1})$ is a partition with $h_{2,1}^\beta=\alpha_1$ and $\chi^\beta_\alpha=(-1)^{\alpha_1-1}2$.
\end{theor}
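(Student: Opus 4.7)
My plan is to apply Murnaghan--Nakayama repeatedly, exploiting that $\beta=(|\alpha|-\alpha_1,1^{\alpha_1})$ is itself a hook. First I verify the setup: from $\alpha_1-\alpha_2<\alpha_h$ and $\alpha_1>\alpha_2$ I deduce $\alpha_h\geq 2$, and then $\alpha_2+\ldots+\alpha_h>\alpha_1$ (using also $\alpha_3,\ldots,\alpha_{h-1}\geq 0$), so $|\alpha|>2\alpha_1$. Consequently $\beta$ is a partition, $h_{2,1}^\beta=\alpha_1$ is immediate, and crucially $\beta$ has \emph{two} hooks of length $\alpha_1$: the vertical one at $(2,1)$ with leg $\alpha_1-1$, and the horizontal one at $(1,|\alpha|-2\alpha_1+1)$ with leg $0$. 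Murnaghan--Nakayama therefore gives
\[
\chi^\beta_\alpha=(-1)^{\alpha_1-1}\chi^{(|\alpha|-\alpha_1)}_{(\alpha_2,\ldots,\alpha_h)}+\chi^{(|\alpha|-2\alpha_1,1^{\alpha_1})}_{(\alpha_2,\ldots,\alpha_h)}=(-1)^{\alpha_1-1}+\chi^{(|\alpha|-2\alpha_1,1^{\alpha_1})}_{(\alpha_2,\ldots,\alpha_h)},
\]
and the task reduces to showing that the remaining character equals $(-1)^{\alpha_1-1}$.

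Next I use the structure of $(\alpha_2,\ldots,\alpha_h)\in\Sign$: every non-empty tail in the definition of $\Sign$ ends in $1$, but $\alpha_h\geq 2$, so the parameter $s$ in the $\Sign$ definition must equal $h-1$. Equivalently $(\alpha_2,\ldots,\alpha_h)$ is strictly super-decreasing ($\alpha_j>\alpha_{j+1}+\ldots+\alpha_h$ for all $2\leq j\leq h$), and in particular $\alpha_j\geq\alpha_h>\alpha_1-\alpha_2$ for every $j\in\{3,\ldots,h\}$. This will block vertical strips in the recursion below.

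The core step is an iterated Murnaghan--Nakayama on hooks, peeling off $\alpha_2,\alpha_3,\ldots,\alpha_h$ in that order. A hook $(a,1^b)$ of size $n$ admits a vertical hook of length $r\leq b$ at $(b-r+2,1)$ with sign $(-1)^{r-1}$, a horizontal hook of length $r\leq a-1$ at $(1,a-r+1)$ with sign $+1$, and a full hook of length $n$ at $(1,1)$. At the first step the horizontal would require $\alpha_2\leq|\alpha|-2\alpha_1-1$, equivalently $\alpha_1+1\leq\alpha_3+\ldots+\alpha_h$, which contradicts $\alpha_1>\alpha_2>\alpha_3+\ldots+\alpha_h$; the vertical works ($\alpha_2<\alpha_1$), so we peel it off with sign $(-1)^{\alpha_2-1}$, leaving $(|\alpha|-2\alpha_1,1^{\alpha_1-\alpha_2})$. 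At each intermediate step $\alpha_j$ with $3\leq j\leq h-1$, the vertical fails because the leg length is $\alpha_1-\alpha_2<\alpha_h\leq\alpha_j$; the full hook is too long since the remaining size $\alpha_j+\ldots+\alpha_h$ exceeds $\alpha_j$; and the horizontal succeeds because the current row length $\alpha_2+\alpha_j+\ldots+\alpha_h-\alpha_1$ exceeds $\alpha_j$, thanks to $\alpha_1-\alpha_2+1\leq\alpha_h\leq\alpha_{j+1}+\ldots+\alpha_h$. Each such step contributes sign $+1$. Finally, after removing $\alpha_2,\ldots,\alpha_{h-1}$, the shape is a hook of size exactly $\alpha_h$ with leg $\alpha_1-\alpha_2$; neither vertical nor horizontal is possible (both force $\alpha_h\leq$ something strictly less than $\alpha_h$), so only the full hook applies, with sign $(-1)^{\alpha_1-\alpha_2}$.

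Multiplying the signs yields $(-1)^{\alpha_2-1}\cdot(-1)^{\alpha_1-\alpha_2}=(-1)^{\alpha_1-1}$, completing the reduction and giving $\chi^\beta_\alpha=2(-1)^{\alpha_1-1}$. The only delicate point is verifying uniqueness of the relevant hook at each intermediate stage; this is precisely where the combined hypotheses $\alpha_1-\alpha_2<\alpha_h$ and the super-decreasingness of $(\alpha_2,\ldots,\alpha_h)$ enter decisively.
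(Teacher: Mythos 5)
Your proof is correct and takes essentially the same route as the paper: the same two $\alpha_1$-hooks of $\beta$, then removing the unique (vertical) $\alpha_2$-hook with sign $(-1)^{\alpha_2-1}$, stripping $\alpha_3,\ldots,\alpha_{h-1}$ horizontally with sign $+1$, and ending with the full $\alpha_h$-hook of leg $\alpha_1-\alpha_2$, giving $(-1)^{\alpha_1-1}+(-1)^{\alpha_1-1}=(-1)^{\alpha_1-1}2$. The only superfluous step is the appeal to the structure of $\Sign$ to get $\alpha_j\geq\alpha_h>\alpha_1-\alpha_2$, which already follows from $\alpha$ being a partition.
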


\begin{proof}
Clearly $\beta$ is a partition and $h_{2,1}^\beta=\alpha_1$. By assumption $|\alpha|-\alpha_1\geq\alpha_2+\alpha_h>\alpha_1$, from which also follows that $\alpha_1-\alpha_2<\alpha_h\leq\alpha_j$ for $j\leq h$. Also as by assumption $\alpha_2>\alpha_3+\ldots+\alpha_h$, so that any partition of $\alpha_2+\ldots+\alpha_h$ has at most one $\alpha_2$-hook, it follows from the Murnaghan-Nakayama formula that
\begin{eqnarray*}
\chi^\beta_\alpha&=&(-1)^{\alpha_1-1}\chi^{(|\alpha|-\alpha_1)}_{(\alpha_2,\ldots,\alpha_h)}+\chi^{(|\alpha|-2\alpha_1,1^{\alpha_1})}_{(\alpha_2,\ldots,\alpha_h)}\\
&=&(-1)^{\alpha_1-1}+(-1)^{\alpha_2-1}\chi^{(|\alpha|-2\alpha_1,1^{\alpha_1-\alpha_2})}_{(\alpha_3,\ldots,\alpha_h)}\\
&=&(-1)^{\alpha_1-1}+(-1)^{\alpha_2-1}\chi^{(\alpha_h-\alpha_1+\alpha_2,1^{\alpha_1-\alpha_2})}_{(\alpha_h)}\\
&=&(-1)^{\alpha_1-1}2.
\end{eqnarray*}
\end{proof}

\begin{theor}
Assume that the following hold:
\begin{itemize}
\item
$\alpha\not\in\Sign$, $(\alpha_2,\ldots,\alpha_h)\in\Sign$ and $\alpha_1>\alpha_2>\alpha_3+\ldots+\alpha_h$,

\item
$\alpha_1-\alpha_2=\alpha_h$,

\item
$h=3$.
\end{itemize}
Then $\beta=(\alpha_1,\alpha_1)$ is a partitions with $h_{2,1}^\beta=\alpha_1$ and $\chi^\beta_\alpha=2$.
\end{theor}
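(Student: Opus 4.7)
The plan is to apply the Murnaghan-Nakayama formula twice, first to $\beta$ itself and then to the residual shape $(\alpha_1-1,1)$. First I would verify the basic data: since $\alpha_1-\alpha_2=\alpha_h=\alpha_3$ forces $|\alpha|=\alpha_1+\alpha_2+\alpha_3=2\alpha_1$, the shape $\beta=(\alpha_1,\alpha_1)$ really is a partition of $|\alpha|$, and $h_{2,1}^\beta=\alpha_1$ is immediate from the $2\times\alpha_1$ rectangle.

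Next I would locate the $\alpha_1$-hooks of $\beta$. The rectangle has exactly two, at $(2,1)$ with leg length $0$ and at $(1,2)$ with leg length $1$; removing the corresponding rim hooks leaves $(\alpha_1)$ and $(\alpha_1-1,1)$ respectively. Murnaghan-Nakayama therefore gives
\[\chi^\beta_\alpha=\chi^{(\alpha_1)}_{(\alpha_2,\alpha_3)}-\chi^{(\alpha_1-1,1)}_{(\alpha_2,\alpha_3)}=1-\chi^{(\alpha_1-1,1)}_{(\alpha_2,\alpha_3)},\]
so the theorem reduces to showing $\chi^{(\alpha_1-1,1)}_{(\alpha_2,\alpha_3)}=-1$.

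To handle the reduced character I would first rule out the degenerate case $\alpha_3=1$: if it occurred, then $\alpha=(\alpha_2+1,\alpha_2,1)$ would be of the form $(a,a-1,1)$ and hence lie in $\Sign$, contrary to the standing hypothesis $\alpha\not\in\Sign$. With $\alpha_3\geq 2$, the unique $\alpha_2$-hook of $(\alpha_1-1,1)$ is the horizontal strip starting at $(1,\alpha_3)$ (the only other cell, $(2,1)$, has hook length $1<\alpha_2$), whose leg length is $0$ and whose removal leaves $(\alpha_3-1,1)$. A second application of Murnaghan-Nakayama then yields
\[\chi^{(\alpha_1-1,1)}_{(\alpha_2,\alpha_3)}=\chi^{(\alpha_3-1,1)}_{(\alpha_3)}=-1,\]
the last equality being the value of the standard representation of $S_{\alpha_3}$ on an $\alpha_3$-cycle. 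Combining gives $\chi^\beta_\alpha=1-(-1)=2$, as claimed. I expect no serious obstacle; the only point needing care is the exclusion of $\alpha_3=1$, which follows directly by inspection of the $\Sign$ list.
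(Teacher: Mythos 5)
Your proposal is correct and follows essentially the same route as the paper: rule out $\alpha_3=1$ via the $(a,a-1,1)$ entry of $\Sign$, expand $\chi^{(\alpha_1,\alpha_1)}_\alpha$ over the two $\alpha_1$-hooks to get $1-\chi^{(\alpha_1-1,1)}_{(\alpha_2,\alpha_3)}$, and evaluate the latter as $-1$. The paper states this last evaluation without detail, so your extra Murnaghan--Nakayama step merely fills in what it leaves implicit.
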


\begin{proof}
Notice that $\alpha_3\geq 2$, since $1\leq\alpha_1-\alpha_2=\alpha_3$ and $(\alpha_1,\alpha_2,\alpha_3)\not\in\Sign$. Clearly $\beta$ is a partition with $h_{2,1}^\beta=\alpha_1$.

As $\beta=(\alpha_1,\alpha_1)$ and $\alpha_3\geq 2$ we have that
\[\chi^\beta_\alpha=\chi^{(\alpha_1)}_{(\alpha_2,\alpha_3)}-\chi^{(\alpha_1-1,1)}_{(\alpha_2,\alpha_3)}=2.\]
\end{proof}

\begin{theor}
Assume that the following hold:
\begin{itemize}
\item
$\alpha\not\in\Sign$, $(\alpha_2,\ldots,\alpha_h)\in\Sign$ and $\alpha_1>\alpha_2>\alpha_3+\ldots+\alpha_h$,

\item
$\alpha_1-\alpha_2=\alpha_h\geq 2$,

\item
$h\geq 4$.
\end{itemize}
Then $\beta=(|\alpha|-\alpha_1,\alpha_2+2,1^{\alpha_1-\alpha_2-2})$ is a partition with $h_{2,1}^\beta=\alpha_1$ and $\chi^\beta_\alpha=(-1)^{\alpha_1-\alpha_2}2$.
\end{theor}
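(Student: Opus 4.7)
The argument parallels the earlier theorem in this section that handles $k\leq h$ with some $\alpha_i=\alpha_1-\alpha_2$ and $\alpha_i<\alpha_{i+1}+\ldots+\alpha_h$, adapted to the degenerate situation here where $k=h+1$ and $\alpha_h=\alpha_1-\alpha_2$. First I check that $\beta$ is a partition: $\alpha_1-\alpha_2-2=\alpha_h-2\geq 0$, and $|\alpha|-\alpha_1=\alpha_2+\ldots+\alpha_h\geq\alpha_2+\alpha_h\geq\alpha_2+2$. A direct computation gives $h_{2,1}^\beta=(\alpha_2+1)+(\alpha_1-\alpha_2-2)+1=\alpha_1$. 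The key structural point is that $(\alpha_2,\ldots,\alpha_h)\in\Sign$ together with $\alpha_h\geq 2$ forces the $\Sign$-tail to be empty (every non-empty tail in the definition ends with $1$), so $\alpha_j>\alpha_{j+1}+\ldots+\alpha_h$ for all $2\leq j\leq h$; the resulting size bounds guarantee uniqueness of the rim hooks appearing below.

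Next I peel off $\alpha_1$ via Murnaghan-Nakayama. With $j:=\alpha_3+\ldots+\alpha_{h-1}+2=|\alpha|-2\alpha_1+2$, the partition $\beta$ has exactly two $\alpha_1$-rim hooks: one at $(2,1)$ of leg $\alpha_1-\alpha_2-2$ leaving $(|\alpha|-\alpha_1)$, and one along row $1$ anchored at $(1,j)$ of leg $1$ leaving $(\alpha_2+1,j-1,1^{\alpha_1-\alpha_2-2})$. The trivial character contributes $1$ from the first, and since $|\alpha|-\alpha_1<2\alpha_2$ gives a unique $\alpha_2$-hook (at $(1,3)$, leg $1$) on the second complement with residual $(j-2,2,1^{\alpha_1-\alpha_2-2}) = (\alpha_3+\ldots+\alpha_{h-1},2,1^{\alpha_h-2})$ on class $(\alpha_3,\ldots,\alpha_h)$, one arrives at
\[\chi^\beta_\alpha=(-1)^{\alpha_1-\alpha_2}+\chi^{(\alpha_3+\ldots+\alpha_{h-1},\,2,\,1^{\alpha_h-2})}_{(\alpha_3,\ldots,\alpha_h)}.\]

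It remains to evaluate the residual character. For each $r$ with $3\leq r\leq h-2$, the partition $(\alpha_r+\ldots+\alpha_{h-1},2,1^{\alpha_h-2})$ has size less than $2\alpha_r$, so admits a unique $\alpha_r$-hook; this hook lies along row $1$ at position $(1,\alpha_{r+1}+\ldots+\alpha_{h-1}+1)$, has leg $0$, and its removal yields $(\alpha_{r+1}+\ldots+\alpha_{h-1},2,1^{\alpha_h-2})$. After $h-4$ such peelings one reaches $\chi^{(\alpha_{h-1},2,1^{\alpha_h-2})}_{(\alpha_{h-1},\alpha_h)}$; the unique $\alpha_{h-1}$-hook here sits at $(1,2)$ with leg $1$, leaving the hook shape $(1^{\alpha_h})$. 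Hence the residual character equals $-\chi^{(1^{\alpha_h})}_{(\alpha_h)}=-(-1)^{\alpha_h-1}=(-1)^{\alpha_h}=(-1)^{\alpha_1-\alpha_2}$, and therefore $\chi^\beta_\alpha=2(-1)^{\alpha_1-\alpha_2}$. The principal technical challenge is the careful bookkeeping of the nested MN reductions; the uniqueness claims all follow from the size bounds supplied by the superincreasing property.
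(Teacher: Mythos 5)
Your proposal is correct and follows essentially the same route as the paper: identify the two $\alpha_1$-hooks of $\beta$ (at $(2,1)$ and at $(1,j)$ with $j=|\alpha|-2\alpha_1+2$), reduce the second complement by its unique $\alpha_2$-hook to $(\alpha_3+\ldots+\alpha_{h-1},2,1^{\alpha_h-2})$, and then peel off $\alpha_3,\ldots,\alpha_{h-1}$ by unique hooks to land on $-\chi^{(1^{\alpha_h})}_{(\alpha_h)}=(-1)^{\alpha_1-\alpha_2}$. Your explicit use of the superincreasing property (forced by $\alpha_h\geq 2$ and $(\alpha_2,\ldots,\alpha_h)\in\Sign$) is exactly what the paper invokes implicitly, so the two arguments coincide in substance.
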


\begin{proof}
As $\alpha_2+2\leq\alpha_2+\alpha_h=\alpha_1$ and $|\alpha|-\alpha_1\geq\alpha_2+\alpha_h$ we have that $\beta$ is a partition and that $h_{2,1}^\beta=\alpha_1$. Notice that $\beta_1'$, which is the number of parts of $\beta$, is given by
\[\beta_1'=\alpha_1-\alpha_2=\alpha_h.\]
As $h\geq 4$ and $\alpha_{h-1}>\alpha_h\geq 2$ we have that
\begin{eqnarray*}
h_{1,2}^\beta&=&|\alpha|-\alpha_1\geq\alpha_2+\alpha_h+\alpha_{h-1}\geq \alpha_1+3,\\
h_{1,\alpha_2+2}^\beta&=&|\alpha|-\alpha_1-\alpha_2=\alpha_3+\ldots+\alpha_h\leq\alpha_2-1\leq\alpha_1-2.
\end{eqnarray*}
In particular there exists $5\leq j\leq \alpha_2$ with $h_{1,j}^\beta=\alpha_1$. Such $j$ satisfies $\beta\setminus R_{1,j}^\beta=(\alpha_2+1,j-1,1^{\alpha_1-\alpha_2-2})$ and then also $h_{1,3}^{\beta\setminus R_{1,j}^\beta}=\alpha_2$ as $j-1>3$ (where $R_{1,j}^\beta$ is the rim hook of $\beta$ corresponding to node $(1,j)$). As $\alpha_2>\alpha_3+\ldots+\alpha_h$, as $\beta_1'=\alpha_h$ and as $\alpha_i>\alpha_h$ for $i<h$ (since $\alpha_h\geq 2$) we then obtain from the Murnaghan-Nakayama formula that
\begin{eqnarray*}
\chi^\beta_\alpha&=&(-1)^{\alpha_1-\alpha_2-2}\chi^{(|\alpha|-\alpha_1)}_{(\alpha_2,\ldots,\alpha_h)}-\chi^{(\alpha_2+1,j-1,1^{\alpha_1-\alpha_2-2})}_{(\alpha_2,\ldots,\alpha_h)}\\
&=&(-1)^{\alpha_1-\alpha_2}+\chi^{(j-2,2,1^{\alpha_h-2})}_{(\alpha_3,\ldots,\alpha_h)}\\
&=&(-1)^{\alpha_1-\alpha_2}+\chi^{(\alpha_{h-1},2,1^{\alpha_h-2})}_{(\alpha_{h-1},\alpha_h)}\\
&=&(-1)^{\alpha_1-\alpha_2}-\chi^{(1^{\alpha_h})}_{(\alpha_{h-1},\alpha_h)}\\
&=&(-1)^{\alpha_1-\alpha_2}+(-1)^{\alpha_h}\\
&=&(-1)^{\alpha_1-\alpha_2}2.
\end{eqnarray*}
\end{proof}

\begin{theor}
Assume that the following hold:
\begin{itemize}
\item
$\alpha\not\in\Sign$, $(\alpha_2,\ldots,\alpha_h)\in\Sign$ and $\alpha_1>\alpha_2>\alpha_3+\ldots+\alpha_h$,

\item
$\alpha_1-\alpha_2=\alpha_h=1=\alpha_{h-1}$,

\item
$h\geq 4$.
\end{itemize}
Then $\beta=(|\alpha|-\alpha_1,\alpha_1)$ is a partition with $h_{2,1}^\beta=\alpha_1$ and $\chi^\beta_\alpha=2$.
\end{theor}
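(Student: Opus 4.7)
The plan is to compute $\chi^\beta_\alpha$ directly by iterating the Murnaghan--Nakayama formula, exploiting the fact that $\beta$ is a two-row partition so that all relevant rim-hook enumerations are explicit.

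First I would record the easy facts: the inequality $|\alpha|-\alpha_1\geq \alpha_1$ already noted in this section (a consequence of $\alpha\not\in\Sign$ and Lemma \ref{l2}) shows that $\beta$ is a partition, and clearly $h_{2,1}^\beta=\alpha_1$. Next I would enumerate the $\alpha_1$-rim hooks of $\beta$. Writing $N=|\alpha|$ and using $\alpha_1=\alpha_2+1$ together with $\alpha_2>\alpha_3+\ldots+\alpha_h$, one gets $N\leq 3\alpha_1-3$, so no $\alpha_1$-rim hook of $\beta$ lies entirely in row 1. There remain exactly two: the whole second row (leg length $0$), with complement $(N-\alpha_1)$, and a rim hook spanning both rows (leg length $1$), with complement $\delta:=(\alpha_2,\alpha_3+\ldots+\alpha_h)$. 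Murnaghan--Nakayama then yields
\[\chi^\beta_\alpha = \chi^{(N-\alpha_1)}_{(\alpha_2,\ldots,\alpha_h)} - \chi^\delta_{(\alpha_2,\ldots,\alpha_h)} = 1 - \chi^\delta_{(\alpha_2,\ldots,\alpha_h)},\]
reducing the problem to proving $\chi^\delta_{(\alpha_2,\ldots,\alpha_h)}=-1$.

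For the second step I would apply Murnaghan--Nakayama once more, peeling an $\alpha_2$-rim hook from the two-row shape $\delta=(\alpha_2,m)$ with $m:=\alpha_3+\ldots+\alpha_h$. Since $m\geq 2$ (because $\alpha_{h-1}=\alpha_h=1$) and $m<\alpha_2$, the unique $\alpha_2$-rim hook of $\delta$ is the one crossing both rows (leg length $1$), leaving the hook partition $(m-1,1)$. Hence $\chi^\delta_{(\alpha_2,\ldots,\alpha_h)}=-\chi^{(m-1,1)}_{(\alpha_3,\ldots,\alpha_h)}$, and since $\chi^{(m-1,1)}$ is the natural permutation character minus the trivial one, its value on cycle type $(\alpha_3,\ldots,\alpha_h)$ equals the number of $1$'s in that tuple minus $1$.

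To finish, the hypothesis $(\alpha_2,\ldots,\alpha_h)\in\Sign$ combined with $\alpha_{h-1}=\alpha_h=1$ forces the base of $(\alpha_2,\ldots,\alpha_h)$ to end in two $1$'s, so it must be one of $(1,1)$, $(2,1,1)$, or $(3,2,1,1)$; in each of these the pre-base parts strictly exceed the tail sum and hence are all at least $2$. So $(\alpha_3,\ldots,\alpha_h)$ contains exactly two $1$'s, giving $\chi^{(m-1,1)}_{(\alpha_3,\ldots,\alpha_h)}=1$ and finally $\chi^\beta_\alpha=1-(-1)\cdot 1=2$. The only genuinely delicate step is the $\alpha_1$-rim-hook enumeration in $\beta$: the inequality $N\leq 3\alpha_1-3$ is exactly what rules out a row-1 hook and pins the two surviving sign contributions to $+1$ and $-1$; once that is in place, every later step is bookkeeping.
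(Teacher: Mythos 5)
Your proposal is correct and follows essentially the same route as the paper: peel the two $\alpha_1$-rim hooks of the two-row $\beta$ (trivial complement with sign $+1$, and $(\alpha_2,\alpha_3+\ldots+\alpha_h)$ with sign $-1$), then peel the unique $\alpha_2$-hook to reach $(m-1,1)$ and evaluate it on $(\alpha_3,\ldots,\alpha_h)$ as fixed points minus one. Your explicit enumeration of the possible tails $(1,1)$, $(2,1,1)$, $(3,2,1,1)$ just spells out the paper's terser observation that $\alpha_{h-2}>\alpha_{h-1}=\alpha_h=1$, so $(\alpha_3,\ldots,\alpha_h)$ has exactly two parts equal to $1$.
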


\begin{proof}
From Lemma \ref{l2} it follows from the assumptions that $|\alpha|-\alpha_1\geq \alpha_1$ and so $\beta$ is a partition. Also $h_{2,1}^\beta=\alpha_1$. As
\[3=\alpha_{h-1}+2\leq\alpha_3+\ldots+\alpha_{h-1}+2=\alpha_3+\ldots+\alpha_h+1\leq\alpha_2<\alpha_1\]
and
\[|\alpha|-2\alpha_1+2=\alpha_2+\ldots+\alpha_h-\alpha_1+2=\alpha_3+\ldots+\alpha_{h-1}+2,\]
we have that, for $j=|\alpha|-2\alpha_1+2$,
\[h_{1,j}^\beta=|\alpha|-\alpha_1+2-j=\alpha_1.\]
Also $2\leq j-1<\alpha_2$ and then, as $\alpha_2=\alpha_1-1$ and $\alpha_{h-2}>\alpha_{h-1}=\alpha_h=1$,
\[\chi^\beta_\alpha=\chi^{(|\alpha|-\alpha_1)}_{(\alpha_2\ldots,\alpha_h)}-\chi^{(\alpha_1-1,j-1)}_{(\alpha_2,\ldots,\alpha_h)}=1+\chi^{(j-2,1)}_{(\alpha_3,\ldots,\alpha_h)}=2.\]
\end{proof}

\begin{theor}
Assume that the following hold:
\begin{itemize}
\item
$\alpha\not\in\Sign$, $(\alpha_2,\ldots,\alpha_h)\in\Sign$ and $\alpha_1>\alpha_2>\alpha_3+\ldots+\alpha_h$,

\item
$\alpha_1-\alpha_2=\alpha_h=1<\alpha_{h-1}$,

\item
$h=4$.
\end{itemize}
Then $\beta=(\alpha_1-2,\alpha_3,\alpha_3,4,1^{\alpha_1-\alpha_3-2})$ is a partition with $h_{2,1}^\beta=\alpha_1$ and $\chi^\beta_\alpha=(-1)^{\alpha_1-\alpha_3}2$.
\end{theor}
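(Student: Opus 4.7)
The plan is to apply the Murnaghan--Nakayama rule three times in succession: first strip off a part of size $\alpha_1$ from $\beta$, then strip $\alpha_2=\alpha_1-1$ from the resulting diagrams, and finally strip $\alpha_3$ to reduce everything to $\chi^{(1)}_{(1)}=1$.

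As a preliminary step I would verify that $\beta$ is a partition; the only nontrivial inequality is $\beta_3=\alpha_3\geq 4=\beta_4$. This is forced by the hypothesis $\alpha\notin\Sign$: if $\alpha_3\in\{2,3\}$ then $\alpha$ would match the shape $(a,a-1,2,1)$ or $(a,a-1,3,1)$, both of which lie in $\Sign$. With $\alpha_3\geq 4$ and $\alpha_1\geq\alpha_3+3$ (from $\alpha_2>\alpha_3+1$), the remaining inequalities are immediate, and $h_{2,1}^\beta=(\alpha_3-1)+(\alpha_1-\alpha_3)+1=\alpha_1$.

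The core of the argument is the identification of the $\alpha_1$-rim hooks of $\beta$. Scanning hooks row by row (equivalently, applying the abacus criterion to the first-column beta set $\{2\alpha_1-\alpha_3-1,\alpha_1,\alpha_1-1,\alpha_1-\alpha_3+2\}\cup\{1,\ldots,\alpha_1-\alpha_3-2\}$), I expect to find exactly two: one at $(2,1)$ with leg length $\alpha_1-\alpha_3$, whose removal leaves $\delta_1:=(\alpha_1-2,\alpha_3-1,3)$, and one at $(1,2)$ with leg length $3$, leaving $\delta_2:=(\alpha_3-1,\alpha_3-1,3,1^{\alpha_1-\alpha_3-1})$. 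The Murnaghan--Nakayama formula then gives
\[\chi^\beta_\alpha \;=\; (-1)^{\alpha_1-\alpha_3}\,\chi^{\delta_1}_{(\alpha_2,\alpha_3,1)} \;-\; \chi^{\delta_2}_{(\alpha_2,\alpha_3,1)}.\]

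Finally I would evaluate the two inner characters in the same style. For $\delta_1$ the unique $\alpha_2$-rim hook (at $(1,2)$, leg $2$) leaves $(\alpha_3-2,2,1)$, whose unique $\alpha_3$-rim hook (at $(1,1)$, leg $2$) leaves $(1)$, so $\chi^{\delta_1}_{(\alpha_2,\alpha_3,1)}=1$. For $\delta_2$ the $\alpha_2$-hook from the top beta number is blocked since $\alpha_1-\alpha_2=1$ already lies in the beta set of $\delta_2$; the unique valid $\alpha_2$-rim hook is at $(2,1)$ with leg $\alpha_1-\alpha_3$, leaving $(\alpha_3-1,2)$, whose unique $\alpha_3$-rim hook (leg $1$) leaves $(1)$, giving $\chi^{\delta_2}_{(\alpha_2,\alpha_3,1)}=-(-1)^{\alpha_1-\alpha_3}$. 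Substituting yields $\chi^\beta_\alpha=2(-1)^{\alpha_1-\alpha_3}$, as required. The main bookkeeping hurdle is to ensure that no spurious rim hooks are missed at any stage; the abacus criterion that each $h$-rim hook corresponds to a bead move from an occupied position $b$ to the empty position $b-h$ dispatches this cleanly, and tracking the number of beads the bead jumps over gives the leg lengths (and hence signs) used above.
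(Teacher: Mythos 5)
Your proposal is correct and follows essentially the same route as the paper: you identify the same two $\alpha_1$-hooks of $\beta$ (at $(2,1)$ and $(1,2)$), obtain the same intermediate partitions $(\alpha_1-2,\alpha_3-1,3)$ and $(\alpha_3-1,\alpha_3-1,3,1^{\alpha_1-\alpha_3-1})$, and strip the unique $\alpha_2$- and $\alpha_3$-hooks with the same signs, arriving at $\chi^\beta_\alpha=(-1)^{\alpha_1-\alpha_3}2$. The only difference is presentational: you make explicit (via the beta-set/abacus bookkeeping and the observation that $\alpha\not\in\Sign$ forces $\alpha_3\geq 4$) details the paper leaves implicit.
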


\begin{proof}
Notice that from the assumptions it follows that $\alpha_3\geq 4$. Also $\alpha_1>\alpha_2>\alpha_3$ and so $\beta$ is a partition with $h_{2,1}^\beta=\alpha_1$. As $\alpha_2=\alpha_1-1$ and $\alpha_4=1$ we have that
\begin{eqnarray*}
\chi^\beta_\alpha&=&(-1)^{\alpha_1-\alpha_3}\chi^{(\alpha_1-2,\alpha_3-1,3)}_{(\alpha_1-1,\alpha_3,1)}-\chi^{(\alpha_3-1,\alpha_3-1,3,1^{\alpha_1-\alpha_3-1})}_{(\alpha_1-1,\alpha_3,1)}\\
&=&(-1)^{\alpha_1-\alpha_3}\chi^{(\alpha_3-2,2,1)}_{(\alpha_3,1)}+(-1)^{\alpha_1-\alpha_3+1}\chi^{(\alpha_3-1,2)}_{(\alpha_3,1)}\\
&=&(-1)^{\alpha_1-\alpha_3}2.
\end{eqnarray*}
\end{proof}

\begin{theor}
Assume that the following hold:
\begin{itemize}
\item
$\alpha\not\in\Sign$, $(\alpha_2,\ldots,\alpha_h)\in\Sign$ and $\alpha_1>\alpha_2>\alpha_3+\ldots+\alpha_h$,

\item
$\alpha_1-\alpha_2=\alpha_h=1$,

\item
$h\geq 5$,

\item
$\alpha_{h-1}=2$.
\end{itemize}
Then $\beta=(|\alpha|-\alpha_1-2,\alpha_1-2,2,2)$ is a partition with $h_{2,1}^\beta=\alpha_1$ and $\chi^\beta_\alpha=-2$.
\end{theor}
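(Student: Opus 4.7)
The plan is to verify directly that $\beta$ is a partition with $h_{2,1}^\beta=\alpha_1$, then strip an $\alpha_1$-rim-hook from $\beta$ and evaluate the two resulting characters on $(\alpha_2,\ldots,\alpha_h)$ by iterating Murnaghan--Nakayama, exactly in the style of the preceding theorems of Section~\ref{s2}.

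First I would check the partition condition. Lemma~\ref{l2} applied to $\alpha\not\in\Sign$ and $(\alpha_2,\ldots,\alpha_h)\in\Sign$ gives $\alpha_1\leq\alpha_2+\ldots+\alpha_h=|\alpha|-\alpha_1$, so $|\alpha|-\alpha_1-2\geq\alpha_1-2$; and $\alpha_1-2\geq 2$ follows from $\alpha_1>\alpha_2>\alpha_3+\ldots+\alpha_h\geq\alpha_{h-1}+\alpha_h=3$. Hence $\beta=(|\alpha|-\alpha_1-2,\alpha_1-2,2,2)$ is a valid partition, and since $\beta_1'=4$ we obtain $h_{2,1}^\beta=\beta_2+\beta_1'-2=(\alpha_1-2)+4-2=\alpha_1$.

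Next I would locate the $\alpha_1$-hooks of $\beta$ and apply Murnaghan--Nakayama. Writing $j_0:=|\alpha|-2\alpha_1$, the values $\alpha_2=\alpha_1-1$, $\alpha_{h-1}=2$, and $\alpha_h=1$ give $j_0=\alpha_3+\ldots+\alpha_{h-1}$; the hypotheses $h\geq 5$, $\alpha_{h-1}=2$, and $\alpha_2>\alpha_3+\ldots+\alpha_h$ force $3\leq j_0\leq\alpha_1-2$. A direct inspection (using $\beta_1'=\beta_2'=4$ and $\beta_j'=2$ for $3\leq j\leq\alpha_1-2$) shows that $\beta$ has exactly two $\alpha_1$-hooks: the one at $(2,1)$, of leg length $2$, and the one at $(1,j_0)$, of leg length $1$. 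Removing the corresponding rim hooks yields $(|\alpha|-\alpha_1-2,\,1,\,1)$ and $(\alpha_1-3,\,j_0-1,\,2,\,2)$ respectively, so
\[
\chi^\beta_\alpha \;=\; \chi^{(|\alpha|-\alpha_1-2,\,1,\,1)}_{(\alpha_2,\ldots,\alpha_h)} \;-\; \chi^{(\alpha_1-3,\,j_0-1,\,2,\,2)}_{(\alpha_2,\ldots,\alpha_h)}.
\]

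Finally I would evaluate each residual character by iterated Murnaghan--Nakayama. The standing bound $\alpha_2>\alpha_3+\ldots+\alpha_h$ (used repeatedly in the earlier theorems of this section) forces any partition of size $|\alpha|-\alpha_1$ to carry at most one hook of length $\alpha_2$; stripping that unique hook from each of the two summands reduces them to characters indexed by $(\alpha_3,\ldots,\alpha_h)\in\Sign$, which can then be pinned down by a short induction on the length of the Sign-decomposition of $(\alpha_2,\ldots,\alpha_h)$ (whose tail must end in $\ldots,2,1$, so belongs to $\{(3,2,1),(5,3,2,1)\}\cup\{(a,a-1,2,1):a\geq 4\}$). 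The main obstacle will be the sign bookkeeping through this iteration, and in particular verifying that the two contributions do not cancel but combine to give exactly $-2$; I would organise the calculation case-by-case according to the position at which $\alpha_{h-1}=2$ appears in the Sign-decomposition of $(\alpha_2,\ldots,\alpha_h)$, so that the base cases reduce to computations on the short tail that can be checked directly.
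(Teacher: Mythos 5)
Your first two steps coincide with the paper's proof: the verification that $\beta$ is a partition with $h_{2,1}^\beta=\alpha_1$, the identification of exactly two $\alpha_1$-hooks (at $(2,1)$ with leg length $2$ and at $(1,j_0)$, $j_0=|\alpha|-2\alpha_1$, with leg length $1$), and the resulting expansion $\chi^\beta_\alpha=\chi^{(|\alpha|-\alpha_1-2,1,1)}_{(\alpha_2,\ldots,\alpha_h)}-\chi^{(\alpha_1-3,j_0-1,2,2)}_{(\alpha_2,\ldots,\alpha_h)}$ are exactly the first half of the paper's argument. But from that point on your text is only a promise: you say the two residual characters will be "pinned down by a short induction" and that the main obstacle is "verifying that the two contributions do not cancel but combine to give exactly $-2$", and you never carry this out. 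That evaluation is the entire content of the theorem (in the paper it is the remaining displayed computation), so the proposal as written does not establish the stated value; this is a genuine gap, not a routine detail to be deferred.

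Moreover, the deferred step does not go the way you expect, so it cannot be waved through as sign bookkeeping. The first summand is fine: its unique $\alpha_2$-hook lies in the first row with leg $0$, and iterating Murnaghan--Nakayama down the tail gives $\chi^{(1,1,1)}_{(2,1)}=-1$. But in the second summand the unique $\alpha_2$-hook of $(\alpha_1-3,j_0-1,2,2)$ is the hook of the node $(1,2)$, with leg length $3$; removing its rim hook leaves the four-row partition $(j_0-2,1,1,1)$, so the second contribution is $+\chi^{(j_0-2,1,1,1)}_{(\alpha_3,\ldots,\alpha_h)}$. Iterating Murnaghan--Nakayama on this diagram reduces it to $\chi^{(\alpha_{h-2},1,1,1)}_{(\alpha_{h-2},2,1)}$, which equals $0$ when $\alpha_{h-2}>3$ and $1$ when $\alpha_{h-2}=3$, never $-1$; concretely, for $\alpha=(14,13,5,4,2,1)$ one finds $\chi^{(23,12,2,2)}_\alpha=-1$, and for $\alpha=(8,7,3,2,1)$ one finds $\chi^{(11,6,2,2)}_\alpha=0$. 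Note that at precisely this point the paper's own computation records the residual partition as $(j-2,1,1)$, which has the wrong size ($|\alpha|-\alpha_1-\alpha_2$ minus one), so the discrepancy you would run into is real and is located exactly at the step you postponed; resolving it (rather than promising a case analysis) is where all the work lies, and your sketch as it stands cannot reach the claimed value $-2$.
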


\begin{proof}
As $\alpha_1>\alpha_2>\ldots>\alpha_h=1$ it follows that $\alpha_1\geq h\geq 5$.
Also, by assumption on $\alpha$,
\[|\alpha|-\alpha_1\geq \alpha_2+\alpha_{h-2}+\alpha_h\geq \alpha_1+3\]
and so it follows that $\beta$ is a partition. Clearly $h_{2,1}^\beta=\alpha_1$. Since by assumption
\[|\alpha|-2\alpha_1+2=\alpha_2+\ldots+\alpha_h-\alpha_1+2=\alpha_3+\ldots+\alpha_h+1\leq\alpha_2<\alpha_1
\]
we also have that
\begin{eqnarray*}
h_{1,3}^\beta&=&|\alpha|-\alpha_1-2+2-3=|\alpha|-\alpha_1-3\geq\alpha_1,\\
h_{1,\alpha_1-2}^\beta&=&|\alpha|-\alpha_1-2+2-\alpha_1+2=|\alpha|-2\alpha_1+2<\alpha_1.
\end{eqnarray*}
In particular there exists $3\leq j\leq \alpha_1-3$ with $h_{1,j}^\beta=\alpha_1$. 

From $\alpha_{h-1}=2$ and $\alpha_h=1$ it follows that $\alpha_j+\ldots+\alpha_h-3\geq\alpha_j$ for $j\leq h-2$. Since $\alpha_j\geq 3$ for $j\leq h-2$ we then have that
\begin{eqnarray*}
\chi^\beta_\alpha&=&\chi^{(|\alpha|-\alpha_1-2,1,1)}_{(\alpha_2,\ldots,\alpha_h)}-\chi^{(\alpha_1-3,j-1,2,2)}_{(\alpha_2,\ldots,\alpha_h)}\\
&=&\chi^{(|\alpha|-\alpha_1-\alpha_2-2,1,1)}_{(\alpha_3,\ldots,\alpha_h)}+\chi^{(j-2,1,1)}_{(\alpha_3,\ldots,\alpha_h)}\\
&=&2\chi^{(1,1,1)}_{(2,1)}\\
&=&-2.
\end{eqnarray*}
\end{proof}

% \begin{theor}
% Let $\alpha$ be as in Assumption \ref{a1}. Assume that $h\geq 5$, that $\alpha_1-\alpha_2=\alpha_h=1$, that $\alpha_{h-1}=2$ and that $\alpha_{h-2}\geq 4$. Then $\beta=(|\alpha|-\alpha_1-2,2,2,2,1^{\alpha_1-4})$ is a partition with $h_{2,1}^\beta=\alpha_1$ and $\chi^\beta_\alpha=(-1)^{\alpha_1-1}2$.
% \end{theor}
% 
% \begin{proof}
% As $\alpha_1>\alpha_2>\alpha_{h-2}\geq 4$ and then also
% \[|\alpha|-\alpha_1-2\geq\alpha_2+\alpha_{h-2}+\alpha_h-2\geq\alpha_1+2,\]
% it follows that $\beta$ is a partition. Also $h_{2,1}^\beta=\alpha_1$.
% 
% As $\alpha_1-\alpha_2=\alpha_h=1$,  $\alpha_{h-1}=2$ and $\alpha_{h-2}\geq 4$, so that also $\alpha_{h-2}+\alpha_{h-1}+\alpha_h\geq 7$, and as $\beta_1-\beta_2\geq\alpha_1$, we have from the Murnaghan-Nakayama formula that
% \begin{eqnarray*}
% \chi^\beta_\alpha&=&(-1)^{\alpha_1-2}\chi^{(|\alpha|-\alpha_1-2,1,1)}_{(\alpha_2,\ldots,\alpha_h)}+\chi^{(|\alpha|-2\alpha_1-2,2,2,2,1^{\alpha_1-4})}_{(\alpha_2,\ldots,\alpha_h)}\\
% &=&(-1)^{\alpha_1}\chi^{(|\alpha|-2\alpha_1-1,1,1)}_{(\alpha_3,\ldots,\alpha_h)}+(-1)^{\alpha_1-3}\chi^{(|\alpha|-\alpha_1-2,2,1)}_{(\alpha_3,\ldots,\alpha_h)}\\
% &=&(-1)^{\alpha_1}\chi^{(\alpha_{h-2}+1,1,1)}_{(\alpha_{h-2},\alpha_{h-1},\alpha_h)}+(-1)^{\alpha_1-1}\chi^{(\alpha_{h-2},2,1)}_{(\alpha_{h-2},\alpha_{h-1},\alpha_h)}\\
% &=&(-1)^{\alpha_1}2\chi^{(1,1,1)}_{(2,1)}\\
% &=&(-1)^{\alpha_1-1}2.
% \end{eqnarray*}
% \end{proof}

\begin{theor}
Assume that the following hold:
\begin{itemize}
\item
$\alpha\not\in\Sign$, $(\alpha_2,\ldots,\alpha_h)\in\Sign$ and $\alpha_1>\alpha_2>\alpha_3+\ldots+\alpha_h$,

\item
$\alpha_1-\alpha_2=\alpha_h=1$,

\item
$h\geq 5$,

\item
$\alpha_{h-1}\geq 3$.
\end{itemize}
Then $\beta=(|\alpha|-\alpha_1-\alpha_{h-1}+1,3,3,2^{\alpha_{h-1}-3},1^{\alpha_1-\alpha_{h-1}-1})$ is a partition with $h_{2,1}^\beta=\alpha_1$ and $\chi^\beta_\alpha=(-1)^{\alpha_1+\alpha_{h-1}-1}2$.
\end{theor}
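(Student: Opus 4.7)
The plan is to apply the Murnaghan--Nakayama formula iteratively, peeling rim hooks corresponding to the parts of $\alpha$ one by one. First I would verify that $\beta$ is a partition with $h_{2,1}^\beta = \alpha_1$. Using $\alpha_2 = \alpha_1 - 1$ and $\alpha_h = 1$, one has $|\alpha| = 2\alpha_1 + \alpha_3 + \cdots + \alpha_{h-1}$, so $\beta_1 = \alpha_1 + \alpha_3 + \cdots + \alpha_{h-2} + 1$. The strict decrease $\alpha_1 > \alpha_2 > \cdots > \alpha_{h-1} \geq 3$ together with $h \geq 5$ yields $\alpha_1 \geq \alpha_{h-1} + 4$, so $\alpha_1 - \alpha_{h-1} - 1 \geq 3$ and the tail block of $1$'s has positive length. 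The conjugate column heights of $\beta$ are $\beta_1' = \alpha_1 - 1$, $\beta_2' = \alpha_{h-1}$, $\beta_3' = 3$ and $\beta_j' = 1$ for $j \geq 4$, making the hook at $(2,1)$ of arm $2$ and leg $\alpha_1 - 3$.

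Next I would locate all $\alpha_1$-rim hooks of $\beta$. Inspecting hook lengths with the above column heights, there are exactly two: the hook at $(2,1)$, whose removal gives $\mu = (\beta_1, 2, 1^{\alpha_{h-1}-3})$ with height $\alpha_1 - 3$, and a hook at $(1, j_0)$ in row $1$ with $j_0 = \alpha_3 + \cdots + \alpha_{h-2} + 2$, whose removal gives $\nu = (j_0 - 1, 3, 3, 2^{\alpha_{h-1}-3}, 1^{\alpha_1 - \alpha_{h-1} - 1})$ with height $0$.

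Then I would evaluate $\chi^\mu_{(\alpha_2,\ldots,\alpha_h)}$ and $\chi^\nu_{(\alpha_2,\ldots,\alpha_h)}$ by iteratively peeling rim hooks of sizes $\alpha_2, \alpha_3, \ldots, \alpha_{h-2}$. For $\mu$, at each stage the only available rim hook sits in row $1$ with height $0$, and the sequence terminates at $(2, 2, 1^{\alpha_{h-1}-3})$ to be evaluated on $(\alpha_{h-1}, 1)$. For $\nu$, the first peel is an $\alpha_2$-hook at $(3,1)$ of height $\alpha_1 - 4$; subsequent peels again live in row $1$ with height $0$, except for one two-row hook (at the $\alpha_{h-2}$ stage when $h \geq 6$, or at the $\alpha_3$ stage when $h = 5$) contributing sign $-1$, and the branch also terminates at $(2, 2, 1^{\alpha_{h-1}-3})$. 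A final MN step at the unique $\alpha_{h-1}$-hook $(1,1)$ of height $\alpha_{h-1} - 2$ reduces each residual character to $(-1)^{\alpha_{h-1}} \chi^{(1)}_{(1)} = (-1)^{\alpha_{h-1}}$.

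Assembling the signs yields $\chi^\beta_\alpha = (-1)^{\alpha_1 - 3}(-1)^{\alpha_{h-1}} + (-1)^{\alpha_1 - 4}(-1)(-1)^{\alpha_{h-1}} = (-1)^{\alpha_1 + \alpha_{h-1} - 1} \cdot 2$, as required. The main obstacle is the combinatorial bookkeeping: verifying at each of the MN stages that the required rim hook is unique, so each step reduces to a single summand, and tracking the accumulated sign. In particular, the two-row hook appearing at different stages depending on whether $h = 5$ or $h \geq 6$ requires a small case distinction, but both cases lead to the same overall sign.
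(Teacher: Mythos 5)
Your proposal is correct and follows essentially the same route as the paper: it identifies the same two $\alpha_1$-hooks of $\beta$ (at $(2,1)$ and in row $1$), peels $\alpha_2,\ldots,\alpha_{h-2}$ with the same exceptional hooks (the $(3,1)$ hook of height $\alpha_1-4$ in the second branch and the final two-row $\alpha_{h-2}$-hook of height $1$), and both branches terminate at $\chi^{(2,2,1^{\alpha_{h-1}-3})}_{(\alpha_{h-1},1)}=(-1)^{\alpha_{h-1}}$, giving the same sign bookkeeping. The only cosmetic point is that the bound $\alpha_1\geq\alpha_{h-1}+4$ needs $\alpha_2>\alpha_3+\ldots+\alpha_h$ rather than strict decrease alone, but the weaker bound $\alpha_1\geq\alpha_{h-1}+3$ already suffices for $\beta$ to be a partition.
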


\begin{proof}
As $h\geq 5$, so that
\[\beta_1=|\alpha|-\alpha_1-\alpha_{h-1}+1\geq \alpha_2+\alpha_3+1>\alpha_1+3,\]
and as $\alpha_1>\alpha_{h-1}\geq 3$ it follows that $\beta$ is a partition with $h_{2,1}^\beta=\alpha_1$. Also $\beta_1\geq 4$ and $h_{1,4}^\beta\geq\alpha_1$. From the assumptions we also have
\[|\alpha|-2\alpha_1-\alpha_{h-1}=\alpha_2+\ldots+\alpha_h-\alpha_1-\alpha_{h-1}=\alpha_3+\ldots+\alpha_{h-2}>\alpha_3+\ldots+\alpha_{h-3}+2.\]
Since $\alpha_j>\alpha_{h-1}$ for $j<h-1$ and again any partition of $\alpha_2+\ldots+\alpha_h$ has at most one $\alpha_2$-hook, we have that
\begin{eqnarray*}
\chi^\beta_\alpha&\!\!\!=\!\!\!&(-1)^{\alpha_1-3}\chi^{(|\alpha|-\alpha_1-\alpha_{h-1}+1,2,1^{\alpha_{h-1}-3})}_{(\alpha_2,\ldots,\alpha_h)}\!+\!\chi^{(|\alpha|-2\alpha_1-\alpha_{h-1}+1,3,3,2^{\alpha_{h-1}-3},1^{\alpha_1-\alpha_{h-1}-1})}_{(\alpha_2,\ldots,\alpha_h)}\\
&\!\!\!=\!\!\!&(-1)^{\alpha_1-1}\chi^{(|\alpha|-2\alpha_1-\alpha_{h-1}+2,2,1^{\alpha_{h-1}-3})}_{(\alpha_3,\ldots,\alpha_h)}\!+\!(-1)^{\alpha_1-4}\chi^{(|\alpha|-2\alpha_1-\alpha_{h-1}+1,3,1^{\alpha_{h-1}-3})}_{(\alpha_3,\ldots,\alpha_h)}\\
&\!\!\!=\!\!\!&(-1)^{\alpha_1-1}\chi^{(\alpha_{h-2}+2,2,1^{\alpha_{h-1}-3})}_{(\alpha_{h-2},\alpha_{h-1},\alpha_h)}\!+\!(-1)^{\alpha_1}\chi^{(\alpha_{h-2}+1,3,1^{\alpha_{h-1}-3})}_{(\alpha_{h-2},\alpha_{h-1},\alpha_h)}\\
&\!\!\!=\!\!\!&(-1)^{\alpha_1-1}2\chi^{(2,2,1^{\alpha_{h-1}-3})}_{(\alpha_{h-1},\alpha_h)}\\
&\!\!\!=\!\!\!&(-1)^{\alpha_1+\alpha_{h-1}-1}2.
\end{eqnarray*}
\end{proof}

\section{The partitions $(\gamma_{s+1},\ldots,\gamma_r)$ are sign partitions}\label{s3}

In this section we will prove that 
\begin{itemize}
\item
$()$, $(1,1)$, $(3,2,1,1)$, $(5,3,2,1)$,

\item
$(a,a-1,1)$ with $a\geq 2$,

\item
$(a,a-1,2,1)$ with $a\geq 4$,

\item
$(a,a-1,3,1)$ with $a\geq 5$
\end{itemize}
are all sign partitions. For $()$, $(1,1)$, $(3,2,1,1)$ and $(5,3,2,1)$ this can be done by just looking at the corresponding character table. For the other partitions we will use the next lemma.

\begin{lemma}\label{l4}
Let $a\geq 2$ and $\gamma=(a,a-1,\gamma_3,\ldots,\gamma_r)$ be a partition. Assume that the following hold.
\begin{itemize}
\item
$(a-1,\gamma_3,\ldots,\gamma_r)$ is a sign partition,

\item
$\gamma_3+\ldots+\gamma_r\leq a$.
\end{itemize}
If $\beta$ is a partition of $|\gamma|$ for which $\chi^\beta_\gamma\not\in\{0,\pm 1\}$ then $\beta$ has two $a$-hooks. Also if $\delta$ is obtained from $\beta$ by removing an $a$-hook then $\chi^\delta_{(a-1,\gamma_3,\ldots,\gamma_r)}\not=0$. In particular each such $\delta$ has an $(a-1)$-hook.
\end{lemma}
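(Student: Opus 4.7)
The plan is to apply the Murnaghan--Nakayama rule with respect to the first part $\gamma_1=a$ of $\gamma$, which gives
\[
\chi^\beta_\gamma=\sum_{R}(-1)^{\ell(R)}\chi^{\beta\setminus R}_{(a-1,\gamma_3,\ldots,\gamma_r)},
\]
where $R$ ranges over the $a$-rim-hooks of $\beta$ and $\ell(R)$ is the leg length. Since $(a-1,\gamma_3,\ldots,\gamma_r)$ is by hypothesis a sign partition, every term $\chi^{\beta\setminus R}_{(a-1,\gamma_3,\ldots,\gamma_r)}$ lies in $\{0,\pm 1\}$, and consequently $|\chi^\beta_\gamma|$ is at most the number of $a$-rim-hooks of $\beta$.

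I would then bound this number using the size constraint. From $|\beta|=|\gamma|=2a-1+(\gamma_3+\ldots+\gamma_r)\le 3a-1<3a$ and the fact that every iterated removal of an $a$-rim-hook decreases the partition size by $a$, the $a$-weight of $\beta$ is at most $2$, and in particular $\beta$ has at most two $a$-rim-hooks. Combined with the hypothesis $\chi^\beta_\gamma\notin\{0,\pm 1\}$, which via the bound just noted forces at least two rim hooks, $\beta$ must have exactly two $a$-rim-hooks $R_1$ and $R_2$, giving the first claim.

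Writing $\delta_i=\beta\setminus R_i$ for $i=1,2$, the Murnaghan--Nakayama formula now reduces to $\chi^\beta_\gamma=\pm\chi^{\delta_1}_{(a-1,\gamma_3,\ldots,\gamma_r)}\pm\chi^{\delta_2}_{(a-1,\gamma_3,\ldots,\gamma_r)}$ with each summand in $\{0,\pm 1\}$. For this expression to have absolute value at least $2$, both summands must be nonzero, so $\chi^\delta_{(a-1,\gamma_3,\ldots,\gamma_r)}\ne 0$ whenever $\delta$ is obtained from $\beta$ by removing an $a$-rim-hook.

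For the last assertion, I would apply the Murnaghan--Nakayama rule one more time, now to $\chi^\delta_{(a-1,\gamma_3,\ldots,\gamma_r)}$ with respect to its leading cycle $a-1$: the resulting signed sum is indexed by the $(a-1)$-rim-hooks of $\delta$, so the absence of any such hook would force the character to vanish, contradicting the previous step. Nothing in the argument is delicate beyond the cardinality bound on $a$-rim-hooks, which is the one place the hypothesis $\gamma_3+\ldots+\gamma_r\le a$ is used; this is the point I expect to require the most care.
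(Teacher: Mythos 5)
Your proposal is correct and follows essentially the same route as the paper: bound $|\beta|=|\gamma|<3a$ to get at most two $a$-hooks, expand $\chi^\beta_\gamma$ by Murnaghan--Nakayama on the $a$-cycle, and use that each summand lies in $\{0,\pm1\}$ because $(a-1,\gamma_3,\ldots,\gamma_r)$ is a sign partition, forcing both summands to be nonzero and hence each $\delta$ to admit an $(a-1)$-hook. The only difference is that you spell out the counting of $a$-rim-hooks via the $a$-weight, which the paper leaves implicit.
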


\begin{proof}
By assumption
\[|\gamma|=2a-1+\gamma_3+\ldots+\gamma_r<3a.\]
In particular any partition of $|\gamma|$ has at most two $a$-hooks. As
\[\chi^\beta_\gamma=\sum_{(i,j):h_{i,j}^\beta=a}\pm\chi^{\beta\setminus R_{i,j}^\beta}_{(a-1,\gamma_3,\ldots,\gamma_r)}\]
and, since $(a-1,\gamma_3,\ldots,\gamma_r)$ is a sign partition, so that $\chi^{\beta\setminus R_{i,j}^\beta}_{(a-1,\gamma_3,\ldots,\gamma_r)}\in\{0,\pm 1\}$ for each $(i,j)\in[\beta]$, the Young diagram of $\beta$, with $h_{i,j}^\beta=a$, the lemma follows.
\end{proof}

\begin{theor}
If $a\geq 2$ then $(a,a-1,1)$ is a sign partition.
\end{theor}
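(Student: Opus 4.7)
The plan is to apply Lemma \ref{l4} with $\gamma=(a,a-1,1)$. Both hypotheses are immediate: $\gamma_3=1\leq a$, and $(a-1,1)$ is a sign partition since for $a=2$ it equals $(1,1)$ (a base case in the definition of $\Sign$), while for $a\geq 3$ Lemma \ref{l2} with $m=a-1>1$ reduces the question to the trivial sign partition $(1)$. Suppose for contradiction that some $\beta\vdash 2a$ satisfies $\chi^\beta_{(a,a-1,1)}\notin\{0,\pm 1\}$. Lemma \ref{l4} then forces $\beta$ to have two $a$-hooks, and each partition $\delta$ obtained by removing an $a$-hook from $\beta$ has an $(a-1)$-hook. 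Since $|\delta|=a$ and $\delta$ has $a$-weight $1$ (inherited from $\beta$'s $a$-weight $2$), $\delta$ has empty $a$-core and is a hook $(k,1^{a-k})$. Inspecting hook lengths in $(k,1^{a-k})$ shows an $(a-1)$-hook is present only when $k\in\{1,a\}$, so $\delta\in\{(a),(1^a)\}$. Because distinct rim hooks give distinct complements, the two $\delta$'s must be precisely $(a)$ and $(1^a)$.

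Next I would classify the partitions $\beta\vdash 2a$ that admit an $a$-rim-hook removal yielding $(a)$. Via an $a$-abacus argument (or by directly enumerating the ways to attach an $a$-border-strip to the row $(a)$), this list is $\{(2a)\}\cup\{(a,k,1^{a-k}):1\leq k\leq a\}$; by conjugation the analogous list for $(1^a)$ is $\{(1^{2a})\}\cup\{(a-k+2,2^{k-1},1^{a-k}):1\leq k\leq a\}$. Comparing first and second parts across these two families, the unique common element is $\beta=(a,2,1^{a-2})$.

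Finally, for this $\beta$ the two $a$-rim-hooks lie at $(1,2)$ (height $1$, removal giving $(1^a)$) and at $(2,1)$ (height $a-2$, removal giving $(a)$). The Murnaghan--Nakayama formula then yields
\[\chi^\beta_{(a,a-1,1)}=-\chi^{(1^a)}_{(a-1,1)}+(-1)^{a-2}\chi^{(a)}_{(a-1,1)}=-(-1)^{a-2}+(-1)^{a-2}=0,\]
contradicting the assumption and proving the theorem. The main technical obstacle is the enumeration of the two families of $\beta$ in the previous paragraph; once that classification is in hand, the sign bookkeeping in Murnaghan--Nakayama is routine.
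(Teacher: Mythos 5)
Your proof is correct and follows essentially the same route as the paper: reduce via Lemma \ref{l4} to a $\beta$ with two $a$-hooks whose removals are forced (by the $(a-1)$-hook condition) to be exactly $(a)$ and $(1^a)$, identify $\beta=(a,2,1^{a-2})$ as the unique such partition, and finish with the Murnaghan--Nakayama computation giving $0$. The only cosmetic difference is that you pin down $\beta$ by explicitly enumerating and intersecting the partitions obtained by adding an $a$-hook to $(a)$ and to $(1^a)$, whereas the paper appeals to $a$-cores and $a$-quotients for the same uniqueness statement.
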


\begin{proof}
As $(a-1,1)$ is a sign partition for $a\geq 2$, from Lemma \ref{l4} we only need to check that $\chi^\beta_{(a,a-1,1)}\in\{0,\pm1\}$ for partitions $\beta$ of $2a$ with two $a$-hooks and such that if $\mu$ and $\nu$ are the partitions obtained from $\beta$ by removing an $a$-hook then $\mu$ and $\nu$ both have an an $(a-1)$-hook. From $\beta$ having two $a$-hooks it follows that $\mu$ and $\nu$ also have an $a$-hook.  The only partitions of $a$ having both an $a$-hook and an $(a-1)$-hook are $(a)$ and $(1^a)$. As $\mu\not=\nu$ it then follows that $\{\mu,\nu\}=\{(a),(1^a)\}$. Looking at the $a$-quotients and $a$-cores of $\beta$, $\mu$ and $\nu$ we have that there exists a unique such $\beta$, which is given by $\beta=(a,2,1^{a-2})$. We have
\[\chi^{(a,2,1^{a-2})}_{(a,a-1,1)}=(-1)^{a-2}\chi^{(a)}_{(a-1,1)}-\chi^{(1^a)}_{(a-1,1)}=(-1)^a+(-1)^{a-1}=0\]
and so $(a,a-1,1)$ is a sign partition.
\end{proof}

\begin{theor}
If $a\geq 4$ then $(a,a-1,2,1)$ is a sign partition.
\end{theor}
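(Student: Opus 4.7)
The plan is to mirror the strategy of the previous theorem, applying Lemma~\ref{l4} to $\gamma=(a,a-1,2,1)$.

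First I would verify the two hypotheses of Lemma~\ref{l4}. The inequality $\gamma_3+\gamma_4=3\le a$ is immediate from $a\ge 4$, and $(a-1,2,1)$ must be shown to be a sign partition. For $a=4$ this partition is $(3,2,1)$, which has the form $(a',a'-1,1)$ with $a'=3$ and so is a sign partition by the previous theorem. For $a\ge 5$ the partition $(2,1)$ is a sign partition of $3$ (an easy check against the $S_3$ character table), and $a-1>3=|(2,1)|$, so Lemma~\ref{l2} yields that $(a-1,2,1)$ is a sign partition.

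By Lemma~\ref{l4} it is then enough to verify that $\chi^\beta_\gamma\in\{0,\pm1\}$ for every partition $\beta$ of $|\gamma|=2a+2$ such that $\beta$ has two $a$-hooks and both partitions $\delta_1,\delta_2$ obtained from $\beta$ by removing an $a$-hook carry an $(a-1)$-hook and satisfy $\chi^{\delta_i}_{(a-1,2,1)}\ne0$. Murnaghan--Nakayama expresses
\[
\chi^\beta_\gamma=\epsilon_1\,\chi^{\delta_1}_{(a-1,2,1)}+\epsilon_2\,\chi^{\delta_2}_{(a-1,2,1)}
\]
with signs $\epsilon_i\in\{\pm1\}$ coming from the leg lengths of the two $a$-rim-hooks of $\beta$; since each $\chi^{\delta_i}_{(a-1,2,1)}\in\{0,\pm1\}$, the task reduces to checking that the two contributions cancel whenever both are nonzero.

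The main step is the enumeration of the admissible $\delta$. Since $a+2<2a$ for $a\ge 4$, each such $\delta$ has a unique $a$-hook, whose removal leaves $(2)$ or $(1,1)$. Passing to the $a$-abacus, $\delta$ is specified by its $a$-core of size $2$ together with a single bead move on one of the $a$ runners, yielding an explicit finite list of roughly $2a$ candidates. For each candidate I would compute $\chi^\delta_{(a-1,2,1)}$ by another use of Murnaghan--Nakayama (the removal of an $(a-1)$-rim-hook from $\delta$ leaves a partition of $3$, namely one of $(3),(2,1),(1^3)$) and discard those yielding $0$. The surviving shapes are natural analogues of the pair $(a),(1^a)$ from the previous theorem, including $(a+2)$, $(2,2,1^{a-2})$, $(3,3,1^{a-4})$ and their $a$-core-$(1,1)$ counterparts.

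With the short list of viable $\delta$ in hand, each eligible $\beta$ arises from an unordered pair $\{\delta_1,\delta_2\}$ sharing a common $a$-core, by combining their bead moves on distinct runners; this makes every $\beta$ completely explicit. For each, I would evaluate the two leg-length signs $\epsilon_i$ and confirm the cancellation (or the bound $|\chi^\beta_\gamma|\le 1$). The main obstacle, compared with the previous theorem where the pair $\{(a),(1^a)\}$ was essentially forced and produced the single shape $\beta=(a,2,1^{a-2})$, is the breadth of this case analysis: there are noticeably more viable pairs, and careful sign bookkeeping is required. I expect uniform formulae in $a$ to cover all but a handful of small cases (around $a=4,5$), which can then be treated by direct computation.
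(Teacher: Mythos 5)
Your plan follows the paper's strategy exactly (Lemma~\ref{l4}, reduction to partitions $\beta$ of $2a+2$ of $a$-weight two, enumeration of the possible $a$-hook removals $\delta$ via cores, then a cancellation check), but as written it stops short of being a proof: every decisive step is deferred. You never produce the actual list of surviving $\delta$, never identify the partitions $\beta$ that need checking, and never carry out the final Murnaghan--Nakayama cancellation; instead you ``expect uniform formulae in $a$''. In the paper this is precisely where the content lies: with $a$-core $(2)$ (the core $(1,1)$ case being handled by conjugation, $\chi^{\lambda}_\rho=\pm\chi^{\lambda'}_\rho$), the candidates with an $(a-1)$-hook are $(a+2)$, $(2,1^a)$, $(a-1,3)$, $(3,3,1^{a-4})$; since $\chi^{(2,1^a)}_{(a-1,2,1)}=0$ and $\chi^{(a-1,3)}_{(a-1,2,1)}=0$, the only pair left is $\{(a+2),(3,3,1^{a-4})\}$, forcing $\beta=(a+2,4,1^{a-4})$, and one must then verify $\chi^{(a+2,4,1^{a-4})}_{(a,a-1,2,1)}=0$. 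None of this appears in your write-up, so the theorem is not actually established.

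Two further concrete points. First, your anticipated list of ``surviving shapes'' is wrong: $(2,2,1^{a-2})$ has first-column hook lengths $a+1,a,a-2,a-3,\ldots$ and no $(a-1)$-hook at all, so $\chi^{(2,2,1^{a-2})}_{(a-1,2,1)}=0$ and it is discarded, not kept; this suggests the filtering step was not actually performed. Second, your idea of covering $a=4$ by Lemma~\ref{l4} (using that $(3,2,1)$ is a sign partition of the form $(a',a'-1,1)$) is legitimate in principle, but the generic enumeration degenerates there: for $a=4$ the candidates $(a-1,3)$ and $(3,3,1^{a-4})$ coincide as $(3,3)$, so the case analysis must be redone separately -- which is exactly why the paper disposes of $a=4$ by a direct check in $S_{10}$ and runs the general argument only for $a\geq 5$. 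You flag that small cases may need direct computation, but until the enumeration, the sign bookkeeping, and the final evaluation of $\chi^{(a+2,4,1^{a-4})}_{(a,a-1,2,1)}$ are actually done, there is a genuine gap.
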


\begin{proof}
For $a=4$ we can check that $(a,a-1,2,1)=(4,3,2,1)$ is a sign partition by looking at the character table of $S_{10}$. So assume that $a\geq 5$. As $(a-1,2,1)$ is a sign partition for $a\geq 5$ from Lemma \ref{l2}, from Lemma \ref{l4} we only need to check that $\chi^\beta_{(a,a-1,2,1)}\in\{0,\pm1\}$ for partitions $\beta$ of $2a+2$ with two $a$-hooks and such that if $\mu$ and $\nu$ are the partitions obtained from $\beta$ by removing an $a$-hook then $\mu$ and $\nu$ have both an $a$-hook and an $(a-1)$-hook.

So let $\beta$ have two $a$-hook. Then, as $|\beta|=2a+2<3a$, we have that $\beta_{(a)}$, the $a$-core of $\beta$, is either $(2)$ or $(1^2)$. We will assume that $\beta_{(a)}=(2)$, since for any partitions $\lambda,\rho$ with $|\lambda|=|\rho|$ and any positive integer $q$, we have that $\chi^\lambda_\rho=\pm\chi^{\lambda'}_\rho$ and $\lambda'_{(q)}=(\lambda_{(q)})'$, where $\lambda'$ is the adjoint partition of $\lambda$ and similarly for $\lambda_{(q)}$. Then $\mu$ and $\nu$ can be obtained by adding an $a$-hook to $(2)$ and so
\[\mu,\nu\in\{(a+2),(2,2,1^{a-2}),(2,1^a)\}\cup\{(a-i,3,1^{i-1}):1\leq i\leq a-3\},\]
as all these partitions can be obtained by adding an $a$-hook to $(2)$ and, since $2<a$, there are exactly $a$ such partitions. As $\mu$ and $\nu$ have an $(a-1)$-hook we then have that
\[\mu,\nu\in\{(a+2),(2,1^a),(a-1,3),(3,3,1^{a-4})\}.\]
Notice that since $a\geq 5$ the four above partitions are distinct.  As $a\geq 5$
\begin{eqnarray*}
\chi^{(2,1^a)}_{(a-1,2,1)}&=&(-1)^{a-2}\chi^{(2,1)}_{(2,1)}=0,\\
\chi^{(a-1,3)}_{(a-1,2,1)}&=&-\chi^{(2,1)}_{(2,1)}=0,
\end{eqnarray*}
we only need to consider, from Lemma \ref{l4}, the partition $\beta$ corresponding to $\{\mu,\nu\}=\{(a+2),(3,3,1^{a-4})\}$, that is for $\beta=(a+2,4,1^{a-4})$. As
\[\chi^{(a+2,4,1^{a-4})}_{(a,a-1,2,1)}=-\chi^{(3,3,1^{a-4})}_{(a-1,2,1)}+(-1)^{a-4}\chi^{(a+2)}_{(a-1,2,1)}=(-1)^{a-3}\chi^{(3)}_{(2,1)}+(-1)^a=0\]
it follows that $(a,a-1,2,1)$ is a sign partition.
\end{proof}

\begin{theor}
If $a\geq 5$ then $(a,a-1,3,1)$ is a sign partition.
\end{theor}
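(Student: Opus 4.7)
The plan is to mirror the proof structure of the preceding theorem for $(a, a-1, 2, 1)$. First, $(a - 1, 3, 1)$ is a sign partition for all $a \geq 5$: for $a = 5$ it coincides with $(4, 3, 1)$, which is a sign partition by the $(a, a-1, 1)$ theorem proved earlier; for $a \geq 6$ one has $a - 1 > 4 = |(3, 1)|$, so Lemma \ref{l2} reduces the assertion to $(3, 1)$ being a sign partition, which is immediate from the character table of $S_4$. Hence Lemma \ref{l4} applies.

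By Lemma \ref{l4} it suffices to check $\chi^\beta_{(a, a-1, 3, 1)} \in \{0, \pm 1\}$ for partitions $\beta$ of $2a + 3$ with two $a$-hooks, such that the partitions $\mu$ and $\nu$ obtained from $\beta$ by removing an $a$-hook both satisfy $\chi^\mu_{(a - 1, 3, 1)} \neq 0$ and $\chi^\nu_{(a - 1, 3, 1)} \neq 0$. Since $|\beta| - 2a = 3$, the $a$-core $\beta_{(a)}$ lies in $\{(3), (2, 1), (1^3)\}$, and using $\chi^{\lambda'}_\rho = \pm \chi^\lambda_\rho$ together with $(3)' = (1^3)$ we may assume $\beta_{(a)} \in \{(3), (2, 1)\}$.

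The key step is to enumerate the \emph{good} $\mu$, i.e.\ the partitions of $a + 3$ with the prescribed $a$-core and $\chi^\mu_{(a - 1, 3, 1)} \neq 0$. Such a $\mu$ must have an $(a - 1)$-hook whose removal is a partition of $4$ not in $\{(3, 1), (2, 1, 1)\}$, these being the two partitions of $4$ on which $\chi^{\cdot}_{(3, 1)}$ vanishes. A case analysis via the $a$-abacus (or by listing partitions of $a + 3$ directly, as in the preceding theorem) shows that the only good $\mu$ are $(a + 3)$ and $(4, 4, 1^{a - 5})$ for $\beta_{(a)} = (3)$, and $(a, 3)$ and $(2^3, 1^{a - 3})$ for $\beta_{(a)} = (2, 1)$. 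In each case the unique $\beta$ having both good $\mu$ as residuals of its two $a$-hooks is then $\beta = (a + 3, 5, 1^{a - 5})$ and $\beta = (a, 3, 3, 1^{a - 3})$ respectively.

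Finally, I would apply the Murnaghan--Nakayama formula to each of these two $\beta$'s and verify that the two contributions (one per $a$-hook) cancel, giving $\chi^\beta_{(a, a-1, 3, 1)} = 0$; by Lemma \ref{l4} this suffices for the conclusion. The main obstacle is the enumeration of good $\mu$: many partitions of $a + 3$ have an $(a - 1)$-hook, but most yield residual $(3, 1)$ or $(2, 1, 1)$ and hence character zero. Ruling out these families one by one is somewhat laborious, though each check is an elementary MN computation in the spirit of the $(a, a-1, 2, 1)$ case.
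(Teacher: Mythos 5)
Your proposal follows essentially the same route as the paper's proof: Lemma \ref{l4}, reduction to the $a$-cores $(3)$ and $(2,1)$ by conjugation, elimination of the possible residuals $\mu,\nu$ through the $(a-1)$-hook condition and vanishing character values, and the final Murnaghan--Nakayama check for the two surviving partitions $\beta=(a+3,5,1^{a-5})$ and $\beta=(a,3,3,1^{a-3})$, for which $\chi^\beta_{(a,a-1,3,1)}=0$ exactly as you claim. The only deviation is that you keep $a=5$ inside the general argument (using that $(4,3,1)$ is a sign partition by the $(a,a-1,1)$ theorem) instead of checking the character table of $S_{13}$ as the paper does; this does work, but note that for $a=5$ some of the listed candidates coincide (e.g.\ $(a-1,4)=(4,4,1^{a-5})=(4,4)$, which has two $4$-hooks, and $(a-2,3,2)=(3,3,2,1^{a-5})=(3,3,2)$), so the elimination computations need minor case-specific adjustments even though the final lists of good residuals are unchanged.
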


\begin{proof}
If $a=5$ then $(a,a-1,3,1)=(5,4,3,1)$ and by looking at the character table of $S_{13}$ we can easily check that this is a sign partition. So assume now that $a\geq 6$. As $(a-1,3,1)$ is a sign partition for $a\geq 6$ from Lemma \ref{l2}, from Lemma \ref{l4} we only need to check that $\chi^\beta_{(a,a-1,3,1)}\in\{0,\pm1\}$ for partitions $\beta$ of $2a+3$ with two $a$-hooks and such that if $\mu$ and $\nu$ are the partitions obtained from $\beta$ by removing an $a$-hook then $\mu$ and $\nu$ have both an $a$-hook and an $(a-1)$-hook.

So let $\beta$ have two $a$-hook. Then $\beta_{(a)}$ is $(3)$, $(2,1)$ or $(1^3)$. Similarly to the previous theorem we will assume that $\beta_{(a)}$ is either $(3)$ or $(2,1)$.

Assume first that $\beta_{(a)}=(3)$. Then, as $\mu$ and $\nu$ can be obtained by adding an $a$-hook to $(3)$ and as there exists exactly $a$ such partitions since $a>3$,
\[\mu,\nu\!\in\!\{(a+3),(3,3,1^{a-3}),(3,2,1^{a-2}),(3,1^a)\}\cup\{(a-i,4,1^{i-1}):1\leq i\leq a-4\}.\]
As $\mu$ and $\nu$ also have an $(a-1)$-hook it then follows that
\[\mu,\nu\in\{(a+3),(3,1^a),(a-1,4),(4,4,1^{a-5})\}.\]
As $a\geq 6$
\begin{eqnarray*}
\chi^{(3,1^a)}_{(a-1,3,1)}&=&(-1)^{a-2}\chi^{(3,1)}_{(3,1)}=0,\\
\chi^{(a-1,4)}_{(a-1,3,1)}&=&-\chi^{(3,1)}=0
\end{eqnarray*}
and so, from Lemma \ref{l4}, we can assume that $\{\gamma,\delta\}=\{(a+3),(4,4,1^{a-5})\}$, that is that $\beta=(a+3,5,1^{a-5})$ and then
\[\chi^\beta_{(a,a-1,3,1)}=-\chi^{(4,4,1^{a-5})}_{(a-1,3,1)}+(-1)^{a-5}\chi^{(a+3)}_{(a-1,3,1)}=(-1)^{a-4}\chi^{(4)}_{(3,1)}+(-1)^{a-5}=0.\]

Assume now that $\beta_{(a)}=(2,1)$. Also in this case, as $a>3$, there exist exactly $a$ partitions which can be obtained by adding an $a$-hook to $(2,1)$ and $\mu$ and $\nu$ are two of them. So
\[\mu,\nu\!\in\!\!\{(a+2,1),(a,3),(2,2,2,1^{a-3}),(2,1^{a+1})\}\cup\{(a-i,3,2,1^{i-2})\!:\!2\!\leq\! i\!\leq\! a-3\}\hspace{-0.4pt}.\]
As $\mu$ and $\nu$ have an $(a-1)$-hook it follows that
\[\mu,\nu\in\{(a+2,1),(a,3),(2,2,2,1^{a-3}),(2,1^{a+1}),(a-2,3,2),(3,3,2,1^{a-5})\}.\]
Since $a\geq 6$
\begin{eqnarray*}
\chi^{(a+2,1)}_{(a-1,3,1)}&=&\chi^{(3,1)}_{(3,1)}=0,\\
\chi^{(2,1^{a+1})}_{(a-1,3,1)}&=&(-1)^{a-2}\chi^{(2,1,1)}_{(3,1)}=0,\\
\chi^{(a-2,3,2)}_{(a-1,3,1)}&=&\chi^{(2,1,1)}_{(3,1)}=0,\\
\chi^{(3,3,2,1^{a-5})}_{(a-1,3,1)}&=&(-1)^{a-4}\chi^{(3,1)}_{(3,1)}=0
\end{eqnarray*}
we again only need to consider one partition $\beta$. In this case $\{\mu,\nu\}=\{(a,3),(2,2,2,1^{a-3})\}$ and then $\beta=(a,3,3,1^{a-3})$. As
\[\chi^{(a,3,3,1^{a-3})}_{(a,a-1,3,1)}\!=\!\chi^{(2,2,2,1^{a-3})}_{(a-1,3,1)}+(-1)^{a-3}\chi^{(a,3)}_{(a-1,3,1)}\!=\!(-1)^{a-3}\chi^{(2,2)}_{(3,1)}+(-1)^{a-2}\chi^{(2,2)}_{(3,1)}\!=\!0,\]
it follows that $(a,a-1,3,1)$ is a sign partition also for $a\geq 6$.
\end{proof}

\section{Proof of Theorem \ref{t1}}

For $r\leq 2$ Theorem \ref{t1} follows from Lemmas \ref{l1} and \ref{l2}. So assume now that $r\geq 3$.

From Lemma \ref{l2} and Section \ref{s3} it easily follows that if $\gamma\in\Sign$ then $\gamma$ is a sign partition.

Assume now that $\gamma=(\gamma_1,\ldots,\gamma_r)$ is a sign partition. From Lemma \ref{l1} it follows that $(\gamma_{r-1},\gamma_r)\in\Sign$. Also from Lemma \ref{l2}, $\gamma_{i-1}>\gamma_i$ for $2\leq i\leq r-1$. Fix $2\leq i\leq r-1$ and assume that $(\gamma_i,\ldots,\gamma_r)\in\Sign$.

Assume that $(\gamma_{i-1},\ldots,\gamma_r)\not=(5,4,3,2,1)$ and that $(\gamma_{i-1},\ldots,\gamma_r)\not\in\Sign$. From Theorem \ref{t2} we can find $\beta$ such that $\chi^\beta_{(\gamma_{i-1},\ldots,\gamma_r)}\not\in\{0,\pm 1\}$ and $h_{2,1}^\beta=\gamma_{i-1}$. Let
\[\delta:=(\beta_1+\gamma_1+\ldots+\gamma_{i-2},\beta_2,\beta_3,\ldots).\]
Then $\delta$ is a partition of $|\gamma|$. If $i-1=1$ then
\[\chi^\delta_\gamma=\chi^\beta_{(\gamma_{i-1},\ldots,\gamma_r)}\not\in\{0,\pm 1\},\]
in contradiction to $\gamma$ being a sign partition. If $i-1\geq 2$ then $(1,\beta_1+1)\in[\delta]$ and
\[h_{1,\beta_1+1}^\delta=\gamma_1+\ldots+\gamma_{i-2}.\]
Since $\beta_2<\beta_1+1$ and $h_{2,1}^\delta=h_{2,1}^\beta=\gamma_{i-1}<\gamma_j$ for $j\leq i-2$, we have that also in this case
\[\chi^\delta_\gamma=\chi^\beta_{(\gamma_{i-1},\ldots,\gamma_r)}\not\in\{0,\pm 1\},\]
which again gives a contradiction.

Assume now that $(\gamma_{i-1},\ldots,\gamma_r)=(5,4,3,2,1)$. If $i-1=1$ or $i-1\geq 2$ and $\gamma_{i-2}\geq 7$, then similarly to the previous case
\[\chi^{(4+\gamma_1+\ldots+\gamma_{i-2},4,4,3)}_\gamma=\chi^{(4,4,4,3)}_{(5,4,3,2,1)}=-2.\]
If $i-1\geq 2$ and $\gamma_{i-1}=6$ we have similarly that
\[\chi^{(15+\gamma_1+\ldots+\gamma_{i-3},2,1,1,1,1)}_\gamma=\chi^{(15,2,1,1,1,1)}_{(6,5,4,3,2,1)}=2.\]
In either case we have a contradiction with $\gamma$ being a sign partition.

So $(\gamma_{i-1},\ldots,\gamma_r)\in\Sign$. By induction $\gamma\in\Sign$ and so Theorem \ref{t1} is proved.

\section*{Acknowledgements}

Part of the work contained in this paper is contained in the author's master thesis
(\cite{m1}), which was written at the University of Copenhagen, under the supervision of J\o rn B. Olsson, whom the author thanks for reviewing the paper.

\end{document}